\newtheorem{theorem}{Theorem}[section]
\newtheorem{proposition}[theorem]{Proposition}
\newtheorem{lemma}[theorem]{Lemma}
\theoremstyle{definition}
\newtheorem{definition}[theorem]{Definition}
\theoremstyle{remark}
\newtheorem{remark}[theorem]{Remark}
\numberwithin{theorem}{section}
\newcommand{\ii}{\mathrm{i}}
\def\bZ{{\mathbb {Z}}}
\def\bR{{\mathbb {R}}}
\def\bC{{\mathbb {C}}}
\def\bP{{\mathbb {P}}}
\def\bH{{\mathbb {H}}}
\def\bT{{\mathbb {T}}}
\def\bS{{\mathbb {S}}}
\def\bI{{\mathbb {I}}}
\def\pA{{\mathcal A}}
\def\pB{{\mathcal B}}
\def\pC{{\mathcal C}}
\def\pD{{\mathcal D}}
\def\pF{{\mathcal F}}
\def\pO{{\mathcal O}}
\def\pP{{\mathcal P}}
\def\pQ{{\mathcal Q}}
\def\tnk{{\mathrm {TN}}_k}
\DeclareMathOperator{\diag}{diag}
\renewcommand{\ker}{\mathop{\mathrm{Ker}}\nolimits}
\newcommand{\specialcell}[1]{\ifmeasuring@#1\else\omit$\displaystyle#1$\ignorespaces\fi}
\begin{document}
\title{Instantons and Bows for the Classical Groups}

\author{Sergey A. Cherkis\thanks{
Department of Mathematics, University of Arizona, 
617 N. Santa Rita Ave, 
Tucson, AZ 85721-0089, USA, 
\tt cherkis@math.arizona.edu}
\and
Jacques Hurtubise\thanks{
Department of Mathematics,
McGill University,
Burnside Hall,
805 Sherbrooke St.W.,
Montreal, Que. H3A 2K6,
Canada,
\tt jacques.hurtubise@mcgill.ca}}

\maketitle

\abstract{The construction of Atiyah, Drinfeld, Hitchin, and Manin  \cite{Atiyah78} provided complete description of all instantons on Euclidean four-space.  It was extended by Kronheimer and Nakajima to instantons on ALE spaces, resolutions of orbifolds $\mathbb{R}^4/\Gamma$ by a finite subgroup $\Gamma\subset SU(2).$  We consider a similar classification, in the holomorphic context, of instantons on some of the next spaces in the hierarchy, the ALF multi-Taub-NUT manifolds, showing how they tie in to the bow solutions to Nahm's equations \cite{Cherkis:2008ip} via the Nahm correspondence. Recently in \cite{Nakajima:2018bpn} and \cite{Nakajima:2016guo},  based on  \cite{MR2023313}, Nakajima and Takayama  constructed  the Coulomb branch of the moduli space of vacua of a quiver gauge theory, tying them to the same space of bow solutions. One can view our construction as describing the same manifold as the Higgs branch of the mirror gauge theory \cite{Cherkis:2011ee}. 
Our construction also yields the monad construction of holomorphic instanton bundles  on the multi-Taub-NUT space for  any classical compact Lie structure group.}

 
\section{Introduction} 

This paper is concerned with the study of instantons with classical structure groups on a multi-Taub-NUT manifold (described in Section \ref{Sec:mlds}). These play central role in the geometric Satake correspondence relating affine Grassmannian of a reductive group to the representation theory of its Langlands dual group  \cite{Braverman:2016pwk,Nakajima:2018ohd} and  in the physics view of the geometric Langlands duality for complex surfaces \cite{Witten10,Witten:2009mh}.   In string and M theory they govern the effective  dynamics of the Chalmers-Hanany-Witten brane configurations \cite{Cherkis:2008ip,Witten:2009xu}.  They also hold key to the numerous aspects of  supersymmetric quantum gauge theory:  the study of its 't Hooft operators \cite{Brennan:2018yuj}, supersymmetric boundary conditions \cite{Gaiotto:2008ak}, its spaces of vacua \cite{Cherkis:1997aa,Nakajima:2016guo}, and the gauge theory mirror symmetry \cite{Intriligator:1996ex,Aharony:1997bx,Cherkis:2011ee}.  There is a recent construction  \cite{Nakajima:2018bpn},  using the reflection functors of  \cite{MR2023313}, describing the moduli spaces of instantons with classical structure groups, is particular for instantons on Asymptotically Locally Euclidean (ALE) spaces.  An interesting physics construction of the moduli space of $SO(8)$ instantons on ALE space can be found in \cite{Tachikawa:2014qaa}.  Our focus is on a different base space, called multi-Taub-NUT, which is a prototypical Asymptotically Locally Flat (ALF)  space.   

 {Instantons on ALF spaces have been studied by various authors.  For instantons with trivial asymptotic holonomy, notably, Etesi and Hausel constructed one instanton on Taub-NUT \cite{Etesi:2001fb} and on multi-Taub-NUT \cite{Etesi:2002cc},  Etesi  \cite{Etesi:2011fy} established integrality of the $L^2$ curvature norm, 
Etesi and Szabo in \cite{Etesi:2008ew} found the moduli space of one instanton and described the corresponding holomorphic bundles on the twistor space, and 
Etesi and Jardim in \cite{Etesi:2006pw} use the Hausel--Hunsicker--Mazzeo compactification to find the dimension of the instanton moduli space.  All of these results assume that the instantons has trivial asymptotic holonomy; here we focus on the case of    instantons with generic asymptotic holonomy.}

The multi-Taub-NUT manifold $\tnk$ maps to $\bR^3$, with a circle as generic fibre. The boundary conditions for instantons on the Taub-NUT mimic those of the $\bR^3$ (Bogomolny) monopoles, and indeed a lot of the structures we consider have a strong filiation with the ones brought to light for monopoles. For example, there is a Nahm  correspondence, tying the instanton to a solution of the Nahm equations; the difference here being that we will be dealing with solutions on a circle (or, rather, a bow to be exact), exhibiting a peculiar sort of quasi-periodicity, rather than solutions on an interval.

The Taub-NUT manifolds are also hyperk\"ahler; as such they have a twistor transform, tying the instantons on the manifold to holomorphic objects on an associated twistor space, which is (differentiably) $\tnk\times \bP^1$. Furthermore, (via a well trodden, if in this case conjectural, path) there is  a Kobayashi-Hitchin correspondence, which says that the instantons are determined by their restriction as  holomorphic objects on the twistor space to a single twistor fibre over $ \bP^1$; this is equivalent to just keeping on $\tnk$ the $\overline\partial$ operator defined by the instanton corresponding to one fixed complex structure in the hyperk\"ahler family. Such twistor and Kobayashi-Hitchin correspondences hold  also on the Nahm side. Schematically, borrowing from our previous paper \cite{Cherkis:2017pop}, there is a diagram of bijective correspondences:

$$\begin{matrix} \begin{pmatrix} \text{Solns to}\\ \text{the ASD }\\ \ \text{equation}\\ \ \text{on}\ \tnk  \end{pmatrix} & \longleftrightarrow & 
\begin{pmatrix} \text{Holom.}\\  \text{vector bundles} \\ \ \text{on}\ \tnk\times \bP^1 \\ \ \text{+ \ conditions}\end{pmatrix}& \longleftrightarrow & 
\begin{pmatrix} \text{Holom. stable}\\ \ \text{vector bundles} \\ \ \text{on}\ \tnk \end{pmatrix}\\ \\
\quad \text{Up}\Big\uparrow\ \ \Big\downarrow\text{Down}&& \Big\updownarrow&&\Big\updownarrow\\ \\
\begin{pmatrix} \text{Bow Solns:}\\ \text{Solns to Nahm's Eqs}\\ \text{+\ Linear Data} \end{pmatrix} & \longleftrightarrow & \begin{pmatrix} {\rm \ Spectral }\\ \ {\rm data\  } \\ \ {\rm on\ }  T\bP^1 \\ \ {\rm + \ conditions}\end{pmatrix}& \longleftrightarrow & \begin{pmatrix} {\rm \ Holomorphic\  }\\ \ {\rm \ bow \ complex} \\ \ {\rm data\ on\ \bP^1}\end{pmatrix}
\end{matrix}$$

The difference here is that we deal with any multi-Taub-NUT space $\tnk$, not just the ordinary (first) Taub-NUT, and, significantly, with all classical groups as gauge groups.    {The  items in the left hand and middle columns encode in varying degrees of explicitness full solutions; the data on the righthand side   require solving a variational problem to  {recover} an instanton, and so do not tell us much about actual solutions; they are much easier to classify, and so inform us about moduli. More explicitly,  on the various components: 
\begin{itemize}
\item Along the top row, the correspondence between the left hand side and the middle is the twistor correspondence, saying that a connection is anti-self-dual iff it defines an integrable $\overline\partial$ operator for each of the complex structures of the hyperk\"ahler family; in this case to make the correspondence more explicit,  the analytical work of \cite{Cherkis:2016gmo} should give the necessary vanishing theorems.
\item The twistor construction considers all complex structures; the map from the middle  to the right hand side on the top just keeps one of them, and extends the bundle to a compactification; this is done here.
\item The correspondence between the left and the right along  the top row is the Kobayashi-Hitchin correspondence. The map from left to right is mostly forgetting structure, (just keeping the $(0,1)$ part of the connection) and compactifying; this is considered below. The correspondence from right to left is for this case conjectural, but there is a well established pattern of these results, following on the work of Donaldson \cite{Donaldson85}, Uhlenbeck-Yau \cite{UhlenbeckYau} and Simpson \cite{Simpson}, and several others.
\item Along the bottom row, the  left to middle correspondence is the well known linearization of Nahm's equations following Hitchin's original work \cite{Hitchin:1983ay}, as extended to multiple intervals in \cite{Hurtubise:1989qy}.
The bottom row, between left and right  is again the Kobayashi-Hitchin correspondence for Nahm's equations, showing that a solution to Nahm's equations with the appropriate boundary behaviour is encoded by its holomorphic part, a ``Nahm complex'', following the pattern established for Euclidean mono\-poles \cite{Donaldson:1985id}, \cite{Hurtubise:1989wh} . For the case at hand, this is  established now   by a recent paper of Nakajima and Takayama \cite{Nakajima:2016guo}, so that the bottom row is indeed a set of equivalences.
\item The top and bottom rows, on the left, are linked by the Nahm transform; in our current case, this is currently being studied in \cite{Second}.
\end{itemize}
We will focus mostly on the right-hand side of the diagram, top to bottom. While we will be establishing just the bijectivity of the maps, we note that the maps in question are all obviously continuous  and holomorphic . For a $U(n)$ gauge group, we will be establishing a bijection between }
 {
\begin{itemize} 
\item Holomorphic vector bundles $E$ (with extra structure) on a compactification $X$ of the multi-Taub-NUT $\tnk$, as described in Theorem \ref{instanton-to-bundle}, arising from an instanton on the Taub-NUT;
\item Sheaves $Q^i_{n-i}, i=0,..,n, P^i_{n-i-1}, i=0,...,n-1,$ on the Riemann sphere, (defining in addition an auxiliary sheaf $R$ on $X$), and maps between the sheaves, as explained in Propositions \ref{degreeofQ} and \ref{P,Q-properties}.
\end{itemize}}

 {The structures of the sheaves $P, Q$, and the maps between them, encode in a fairly natural way the {\it Nahm complex},    the holomorphic part of the solution to Nahm's equations, with appropriate boundary conditions;  in our case these solutions live  on a sequence of intervals, arranged along a circle; this is the subject of Sections \ref{Sec:Bows} and \ref{Sec:BowClxs}. In our context, the appropriate solutions to the Nahm equations  {(with the boundary linear data)} are referred to as bow solutions, and the associated  Nahm complex is referred to as a bow complex.  The  work of  \cite{Nakajima:2016guo}   turns these bow complexes into  unique  bow solutions, as expounded in  \cite{Cherkis:2009jm}, and so establishes the Hitchin-Kobayashi correspondence in this particular case.}
 
  { Taking resolutions of the sheaves $P, Q$ and of the maps between them also gives us a family of matrices from which one can construct a monad description of the bundle $E$ and its auxiliary structure.}  {This} theme of reducing the study of instantons and monopoles, in particular their moduli,  to algebraic data, and to solutions of ordinary differential equations, has quite a history, with the original work of  Nahm, Hitchin, and Donaldson on monopoles \cite{Nahm:1979yw, Nahm84, Hitchin:1982gh, Hitchin:1983ay, Donaldson:1985id}; further work of Murray and Hurtubise-Murray extending this to the classical groups 
\cite{Murray:1983mn, Hurtubise:1989qy, Hurtubise:1989wh};   then of Jarvis for arbitrary compact groups  \cite{Jarvis1,Jarvis2}; the beautiful extension of these ideas by Kronheimer and Nakajima to instantons on Asymptotically Locally Euclidean (ALE) spaces   \cite{KN}; the passage from monopoles to calorons using the monad description of  Charbonneau and Hurtubise \cite{Charbonneau:2006gu}; and the bow construction of $U(n)$ instantons on Asymptotically Locally Flat (ALF) spaces  \cite{Cherkis:2009jm,Cherkis:2010bn} and its monad description \cite{Cherkis:2017pop}. This is but a partial list.  Many of the techniques of these earlier papers reappear here.  

All this work, however, has its roots in the pioneering paper of Atiyah, Hitchin, Drinfeld and Manin \cite{Atiyah78}. It is therefore both an honour and a pleasure to have our work appear in a volume in memory of Michael Atiyah, whose mathematics, in its constant search for deep and interesting connections, and its simultaneous aesthetic sense, is a model for us all.

\section{The Taub-NUT manifolds}

\subsection{The manifolds}\label{Sec:mlds}
The multi-Taub-NUT manifolds $\tnk$ are the basic examples of ALF (Asymptotically Locally Flat) hyperk\"ahler four-manifolds. 
They are constructed by choosing an $S^1$ bundle over $\bR^3$ minus $k$ points $p_1, ...,p_k$, which is isomorphic to the  Hopf (tautological) bundle $\bH= S^3\rightarrow S^2$ on spheres surrounding  each of the points $p_i$; over a sphere surrounding all of the points, it is then isomorphic to the $k^{th}$ power $\bH^{k}$ of the tautological  bundle.  The multi-Taub-NUT  manifold $\tnk$ is obtained from this  $S^1$-bundle by glueing in single points $q_1,...,q_k$ over each of the $p_1, ...,p_k$. The local model for this is the Hopf map $B^4\rightarrow B^3$.  The result is a four-manifold
$$\pi: \tnk\rightarrow \bR^3.$$ There is a commutative diagram
\begin{equation}\label{Hopf}\begin{matrix} \bH^k&\rightarrow &\tnk\setminus\{q_1,...,q_k\}& \rightarrow &\tnk\\
 \downarrow&&\downarrow&&\downarrow\\ S^2 &\rightarrow &\bR^3 \setminus \{p_1, ...,p_k\}&\rightarrow &\bR^3.\end{matrix}\end{equation}
Here the $S^2$ is a sphere out near infinity in $\bR^3$.
We have the following lemma, giving the basic topological invariants of $\tnk$ and of $\bH^k$:

\begin{lemma}
\begin{itemize}

\item $\pi_1(\tnk) = H_1(\tnk,\bZ) = H^1(\tnk,\bZ) = 0$;
\item $H_2(\tnk,\bZ)= H^2(\tnk,\bZ)=\bZ^{k-1}$; a natural basis for $H_2(\tnk,\bZ)$ is given by the inverse images $\gamma_i$ of segments joining $p_{i}$ to $p_{i+1}$, $i=1,...,k-1$. 
 \item $\pi_1(\bH^k) = H_1(\bH^k,\bZ) = \bZ/k\bZ$, $H^1(\bH^k,\bZ) =0$;
\item $H_2(\bH^k,\bZ) = 0 ;  H^2(\bH^k,\bZ) =  \bZ/k\bZ$;
\item The diagram (\ref{Hopf}) gives, on the level of $H^2(\cdot, \bZ)$:
$$\begin{matrix}  \bZ/k\bZ&{\buildrel{(1,...,1)}\over{\leftarrow}} &\bZ^{k-1}& = &\bZ^{k-1}\\
 \uparrow&&\uparrow M&&\uparrow\\\bZ &{\buildrel{(1,...,1)}\over{\leftarrow}}&\bZ^k&\leftarrow &0\end{matrix}$$
 In the dual of the basis given above for $H_2(\tnk,\bZ)$ and   the natural basis for $H^2(\bR^3 - \{p_1, ...,p_k\},\bZ)$ given by the duals of small spheres around the $p_i$, choosing suitable orientations,  the map $M$ is given by the matrix
 $$\begin{pmatrix} -1&1&0&...&0&0\\ 0&-1&1&...&0&0\\ ..&..&..&&&..\\0&0&0&...&-1&1\end{pmatrix}$$
\end{itemize}
\end{lemma}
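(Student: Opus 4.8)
The plan is to reduce everything to one circle bundle — the $S^1$-bundle of (\ref{Hopf}), restricted to suitable bases — and to run the Gysin and fibration homotopy exact sequences with the appropriate Euler class. I would begin with $\bH^k$, which is the circle bundle over $S^2$ of Euler number $k$, equivalently the lens space $S^3/(\bZ/k\bZ)$. In the homotopy exact sequence of $S^1\to\bH^k\to S^2$ the map $\pi_2(S^2)=\bZ\to\pi_1(S^1)=\bZ$ is multiplication by $k$, giving $\pi_1(\bH^k)=H_1(\bH^k,\bZ)=\bZ/k\bZ$ and hence $H^1(\bH^k,\bZ)=\Hom(\bZ/k\bZ,\bZ)=0$. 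In the Gysin sequence $H^0(S^2)\xrightarrow{\cup e}H^2(S^2)\to H^2(\bH^k)\to H^1(S^2)=0$ the first map is again multiplication by $k$, so $H^2(\bH^k,\bZ)=\bZ/k\bZ$; and $H_2(\bH^k,\bZ)=0$ then follows from Poincar\'e duality on the closed oriented $3$-manifold $\bH^k$ (or from universal coefficients, using $H^3(\bH^k,\bZ)=\bZ$).

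For $\tnk$ I would first work on $M^\circ:=\tnk\setminus\{q_1,\dots,q_k\}$, the total space of the $S^1$-bundle over $Y:=\bR^3\setminus\{p_1,\dots,p_k\}$, and afterwards note that filling in the points $q_i$ — of codimension $4$ — changes none of $\pi_1,H_1,H^1,H_2,H^2$: for $\pi_1$ this is van Kampen (the balls around the $q_i$ and their intersections $\simeq S^3$ with $M^\circ$ are simply connected), and for the (co)homology it follows from the long exact sequence of the pair, since by excision $H^j(\tnk,M^\circ)\cong\bigoplus_i\widetilde H^{j-1}(S^3)$ and $H_j(\tnk,M^\circ)\cong\bigoplus_i\widetilde H_{j-1}(S^3)$ vanish for $j\le 3$. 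On $M^\circ$ itself: $Y\simeq\bigvee^k S^2$ is simply connected with $H^2(Y,\bZ)=\bZ^k$ spanned by the duals $\Sigma_i^*$ of the small spheres around the $p_i$, and the Euler class is $e=\sum_i\Sigma_i^*$ (up to orientation signs) because the bundle is Hopf near each puncture. The fibration sequence gives $\pi_2(Y)=\bZ^k\to\pi_1(S^1)=\bZ$ equal to $(n_i)\mapsto\sum n_i$, which is onto, so $\pi_1(M^\circ)=1$; the Gysin sequence gives $H^1(M^\circ)=0$ (since $1\mapsto(1,\dots,1)$ is injective), $H^2(M^\circ)=\bZ^k/\langle(1,\dots,1)\rangle\cong\bZ^{k-1}$ and $H^3(M^\circ)=\bZ^k$. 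Transporting back: $\pi_1(\tnk)=1$, so $H_1(\tnk,\bZ)=0$ and $H^1(\tnk,\bZ)=0$; $H^2(\tnk,\bZ)=\bZ^{k-1}$; and since $H^*(M^\circ,\bZ)$ is torsion-free and $H_1=0$, universal coefficients forces $H_2(\tnk,\bZ)=\bZ^{k-1}$. That the $\gamma_i$ actually form a \emph{basis} I would check by intersection theory: Poincar\'e--Lefschetz duality identifies $\pi^*\Sigma_j^*$ with the proper $2$-cycle $D_j=\pi^{-1}(\rho_j)$ for a ray $\rho_j$ from $p_j$ to infinity, and taking the $\rho_j$ parallel one gets $\langle\pi^*\Sigma_j^*,\gamma_i\rangle=D_j\cdot\gamma_i=-\delta_{ij}+\delta_{j,i+1}$ (for suitably oriented $\gamma_i$), since $D_j$ and $\gamma_i$ meet transversally in a single point over $q_j$ precisely when $j\in\{i,i+1\}$; restricted to $i,j\le k-1$ this matrix is triangular with diagonal $\pm1$, hence unimodular, and as the pairing $H^2(\tnk,\bZ)\times H_2(\tnk,\bZ)\to\bZ$ is perfect this forces $\{\gamma_1,\dots,\gamma_{k-1}\}$ to be a $\bZ$-basis.

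The diagram then comes from applying $H^2(-,\bZ)$ to (\ref{Hopf}): the four nontrivial entries are the groups just found, the top isomorphism $H^2(\tnk)\cong H^2(\tnk\setminus\{q_i\})$ is once more the codimension-$4$ remark, the bottom restriction $H^2(Y)\to H^2(S^2)$ sends each $\Sigma_j^*$ to $1$ because the big sphere is homologous to $\sum_i\Sigma_i$, and the left vertical $H^2(S^2)\to H^2(\bH^k)$ is reduction mod $k$ since $\bH^k\to S^2$ has Euler number $k$. The middle vertical is $M=\pi^*$, whose matrix in the basis dual to the $\gamma_i$ is exactly the intersection computation above, i.e. the stated bidiagonal matrix. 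The one step that really needs care — and the one I would expect to be the main obstacle — is making the last square commute on the nose: with the geometric basis $\{\gamma_i^*\}$ the composite $H^2(Y)\to H^2(\bH^k)$ works out to $(-1,-2,\dots,-(k-1))$ rather than $(1,\dots,1)$, so one must either fix a single consistent basis throughout or read the label $(1,\dots,1)$ on the top arrow in the puncture basis $\{\Sigma_j^*\}$ rather than in $\{\gamma_i^*\}$; pinning down these orientation and basis conventions is the fiddly part, the rest being routine Gysin, van Kampen and universal coefficients.
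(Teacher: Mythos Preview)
Your argument is correct and considerably more detailed than the paper's own proof, which is a one-sentence sketch: it invokes the $k$-fold cover $\bH\to\bH^k$ to get the lens-space invariants, universal coefficients to pass between homology and cohomology, and a Mayer--Vietoris induction building $\tnk$ from ${\mathrm{TN}}_1=\bR^4$ and ${\mathrm{TN}}_{k-1}$. Your route is genuinely different: rather than inducting on $k$, you treat the whole circle bundle at once via the Gysin sequence and the homotopy long exact sequence, and you handle the NUTs by the codimension-$4$ excision argument. This buys you a cleaner identification of the Euler class and hence direct access to the matrix $M$ and to the basis $\{\gamma_i\}$ via intersection with the half-fibres $D_j$; the paper's inductive approach would have to track these through each Mayer--Vietoris step. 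Conversely, the paper's method needs only Mayer--Vietoris and universal coefficients and avoids the Gysin machinery, at the cost of leaving the explicit bases and the form of $M$ to the reader. Your closing caveat about the label $(1,\dots,1)$ on the top arrow is well taken: the paper does not spell out its conventions either, and the honest reading is that the labels are taken in the puncture basis $\{\Sigma_j^*\}$ pushed through $\pi^*$ and reduction mod $k$, not in the dual basis $\{\gamma_i^*\}$.
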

The proof uses the fact that one has a covering space $\bH\rightarrow \bH^k$, the universal coefficients theorem relating homology and cohomology and the Mayer-Vietoris sequences giving the construction of $\tnk$ inductively from $\bR^4 =  {\mathrm {TN}}_1$ and ${\mathrm {TN}}_{k-1}$.

To get a hyperk\"ahler manifold, one must of course give the metric. This is done via the Gibbons-Hawking ansatz, which will give us a manifold with an $S^1$-triholomorphic action. The orbits of this action tend asymptotically to circles of a constant length: explicitly there are local coordinates $(t_1,t_2, t_3,\theta)\in \bR^3\times [0,2\pi)$ in which the action of $S^1$  by a linear shift in $\theta$ is isometric,  and the metric  locally has the Gibbons-Hawking form \cite{Gibbons:1979zt}:
\begin{align}\label{TNmetric}
ds^2 = V(t) d\vec{t}\phantom{|}^{2} + \frac{(d\theta+ \omega \,)^2} {V(t)},
\end{align}
with  
$$V(t) = \ell + \sum_{i=1}^k\frac{1} { 2|\vec{t}-\vec{p}_i |},$$
where $\ell>0$ is a fixed parameter determining the asymptotic size of the $S^1$
and the local one-form $\vec{\omega}\cdot d\vec{t}$ appearing in the metric is related to $V$ by 
$\frac{\partial}{\partial t_i}  V = \epsilon_{ijk}\frac{\partial}{\partial t_j}\omega_k.$ 
The local $S^1$ fibre coordinate $\theta$ is identified with  $ \theta+2\pi$ and it is a local coordinate in a chart $\pi^{-1}(U)$ over a contractible region $U$ in  $\mathbb{R}^3\setminus \{p_1, ...,p_k\}$.  Note, that although $\theta$ and $\omega = \vec{\omega}\cdot d\vec{t}$ are local, the one-form $d\theta+\vec{\omega}\cdot d\vec{t}$ is a global one-form dual to the isometry generating vector field $\frac{\partial}{\partial\theta}.$

\subsection{Twistor spaces} 
The Taub-NUT manifolds are hyperk\"ahler, and so have a Riemann sphere's worth of complex structures; these get encoded in a twistor space which is, as a differentiable manifold, $\mathcal{Z}=\tnk\times\bP^1$.  It is a holomorphic fibration $\mathcal{Z}\rightarrow\mathbb{P}^1$ with fibre $\tnk(\zeta)$, the manifold equipped with the complex structure $I_\zeta$,  over $\zeta\in\mathbb{P}^1.$ This space $\mathcal{Z}$ lies above the complex surface $\bS$ that is the total space  of $\rho: \pO(2)\rightarrow \bP^1$; $\bS$ is called the mini-twistor space of $\mathbb{R}^3$; its points are oriented lines in $\bR^3$, with coordinates $(\eta,\zeta)$ corresponding to  $\eta\partial/\partial\zeta\in\pO(2)=t\bP^1.$ 

Referring to Hitchin's paper \cite{Hitchin:1982gh} the twistor correspondence gives for a point $(t_1,t_2,t_3)$ in $\bR^3$, the complex section of the fibering $\rho: \bS=\pO(2)\rightarrow \bP^1$
$$\eta=x(\zeta) := (t_1+\ii t_2) -2t_3\zeta + (-t_1+\ii t_2)\zeta^2.$$
where $\zeta$ is a coordinate in $\bP^1$  and $\eta$ the fibre coordinate.For example, $\eta=p_i(\zeta)$  is a section of $\mathcal{O}(2)$  corresponding to $\vec{p}_i\in\mathbb{R}^3.$ The surface  $\bS ={\mathrm{Tot}\, } \pO(2)$ has lying above it   line bundles $L^\mu$, whose transition functions from $\zeta\neq 0$ to $\zeta\neq \infty$ are given by $\exp(\mu\eta/\zeta)$. We set $  L^\mu(j) :=   L^\mu\otimes \pO(j) $, 
and take over  $\bS$ the subbundle $\bT$ of conics lying inside the rank two bundle $L^\alpha(k)\oplus L^{-\alpha}(k)$ given over $\zeta\neq \infty$ by 
$$\bT = \{ (\xi,\psi)\in (L^\alpha(k)\oplus L^{-\alpha}(k))|_{(\eta, \zeta)}|\ \xi\psi = \prod_{i=1}^k (\eta-{p_i}(\zeta))\}.$$
Over the line $\mathfrak{l}$ in $\bR^3$ corresponding to $(\eta, \zeta)\in\bS$, the conic is either a cylinder (if all $p_i$ are disjoint from $\mathfrak{l}$) or the union of two disks at a point (if one $p_i\in \mathfrak{l}$) or a chain of a disk, a chain of $\mathbb{P}^1$'s, and another disk, if several of the $p_i\in \mathfrak{l}$. In all  cases, the two end annuli are metrically asymptotic to cylinders of radius $2\pi/\sqrt{\ell}.$

Fixing a complex structure (for convenience, say $\zeta= 0$) there is a complex surface projecting to the $\eta$ line:
$$\tnk (0) = \{ (\xi,\psi,\eta )\in \bC^3|\ \xi\psi = \prod_{i=1}^k (\eta-p_i(0))\}.$$
Without  loss of generality, we presume all $z_i =  {p_i}(0)=p_j^1+\ii p_j^2$ distinct; otherwise, we just shift $\zeta = 0$ to another value.

\subsection{ Compactifying the conics}

We now build a holomorphic  compactification $X$ of $\tnk(0)$ by 
\begin{itemize}
\item Adding a point to each end of each conic over the plane $\eta \in \bC$; this turns our cylinders over $\eta\neq  z_i$ into Riemann spheres  or, over $\eta=  z_i$, into chains of two Riemann spheres; call this space $X_0,$ 
\item Adding an extra $\bP^1$ over $\eta = \infty$, compactifying  $X_0$ to $X$.
\end{itemize}
The space $X_0$ is obtained from $\bP^1\times \bC$ by blowing up $k$ points $(\psi , \eta) = (0,  z_i)$;  
$X$ in turn, can be obtained from $\bP(\pO(k)\oplus \pO)$  over $\bP^1$ by blowing up $k$ points $((\psi,\phi),\eta)= ((0,1), z_i)$. Set $\xi = \phi \cdot  \prod_{i=1}^k (\eta- z_i)$.

The resulting $\pi:X\rightarrow \bP^1$  then has 
\begin{itemize}
\item An ``infinity divisor'' $C_\infty$, the proper transform of the section $(\psi,\phi)=(0,1)$; this has self-intersection $0$.
\item A ``zero  divisor'' $C_0$ given by $\phi = 0$; this has self-intersection $-k$.
\item A generic fibre $F$ of the original ruled surface, with self intersection 0.
\item Exceptional   fibres $D_i = D_{i, \xi}\cup D_{i,\psi}$ given as the inverse image  $\pi^{-1}(z_i)$ in $X$ of $z_i$ under $\pi$. Both   $D_{i, \xi}$ and $D_{i,\psi}$ have self-intersection $-1$. One has $D_{i, \xi}\cap C_ 0 = 1,D_{i, \xi}\cap C_ \infty = 0,  D_{i, \psi}\cap C_ 0 = 0,D_{i, \psi}\cap C_ \infty = 1$.
\end{itemize}
 The complex curves $F,  D_{i,\psi},  D_{i,\xi}, C_0, C_\infty$ are all   projective lines.
We have that $\psi = 0$ on $C_\infty \cup (\cup_{i=1}^k D_{i,\psi})$, and $\psi = \infty$ on $C_0$;  the function $\xi$, in turn, is zero on $C_0 \cup (\cup_{i=1}^k D_{i,\xi})$, and infinite on $C_\infty$.  
\begin{figure}[htb]
\begin{center}
    \includegraphics[width=0.35\textwidth]{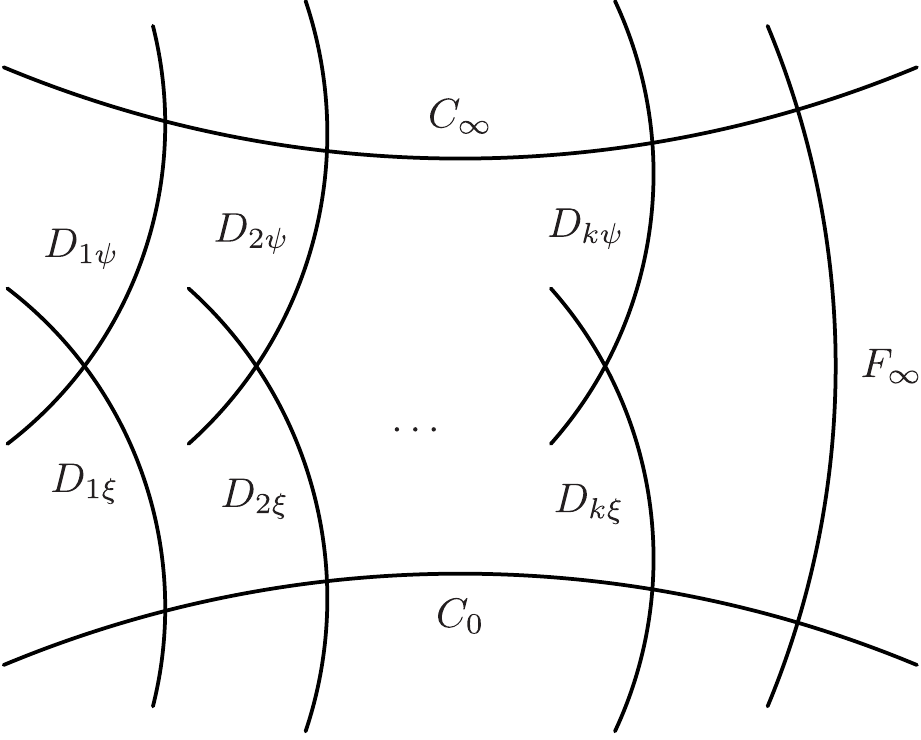}
\label{Inter_k}
\end{center}
\end{figure}

\begin{lemma}\label{Lem:TopX}
\begin{itemize}

\item $\pi_1(X) = H_1(X,\bZ) = H^1(X,\bZ) = 0$;
\item $H_2(X,\bZ)= H^2(X,\bZ)=\bZ^{k+2}$; a  basis for $H_2(X,\bZ)$ is given by the curves $C_\infty, F, D_{i,\xi}, i=1,...,k$.
\item The inclusion $\tnk\rightarrow X$ induces on second  homology a map, given in the bases   $\gamma_i$ for $H^2(\tnk,\bZ)$, and   $C_0, F, D_{i,\xi}, i=1,...,k$ for $H^2(X,\bZ)$, by the matrix
$$\begin{pmatrix} 0&0& 0&...&0&0\\0&0& 0&...&0&0\\-1&0&0&...&0&0\\1&-1&0&...&0&0\\0&1&-1&...&0&0\\0&0&1&....&0&0\\ ...&...&...& &...&..\\0&0&0&...&1&-1\\0&0&0&...&0&1\\\end{pmatrix}$$

\end{itemize}
\end{lemma}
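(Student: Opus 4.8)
The plan is to use that $X$ is a smooth projective rational surface, read off its homology directly from the blow‑up construction, and then pin down the inclusion $\iota\colon\tnk\hookrightarrow X$ by computing intersection numbers inside $X$.

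First I would treat the topology of $X$ itself. The surface $\bF_k=\bP(\pO(k)\oplus\pO)$ is a holomorphic $\bP^1$‑bundle over $\bP^1$, hence simply connected with $H_2(\bF_k,\bZ)=\bZ^2$ torsion‑free, generated by a fibre and a section; and $X$ is obtained from $\bF_k$ by blowing up the $k$ points $((\psi,\phi),\eta)=((0,1),z_i)$. Since blowing up a smooth point of a complex surface is topologically connected sum with $\overline{\bP^2}$, each blow‑up leaves $\pi_1$ unchanged and adds one free generator to $H_2$, namely the class of the exceptional curve $E_i=D_{i,\psi}$. Hence $\pi_1(X)=1$, so $H_1(X,\bZ)=0$ and $H^1(X,\bZ)=0$, and $H_2(X,\bZ)$ is free of rank $k+2$, with $H^2(X,\bZ)\cong H_2(X,\bZ)\cong\bZ^{k+2}$ by Poincaré duality (a consistency check: $\chi(X)=\chi(\bF_k)+k=4+k=2+b_2(X)$). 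A visible $\bZ$‑basis is $\{C_0,F,E_1,\dots,E_k\}$: the $(-k)$‑section $C_0=\{\phi=0\}$ is disjoint from the blown‑up points and so survives unchanged, $F$ is a generic fibre, and the $E_i$ are the exceptional curves. The fibre over $z_i$ degenerates in $X$ to $D_{i,\xi}\cup D_{i,\psi}$ with $D_{i,\psi}=E_i$ the exceptional curve and $D_{i,\xi}$ the proper transform of the fibre, so $[D_{i,\xi}]=[F]-[E_i]$; likewise $[C_\infty]=[C_0]+k[F]-\sum_i[E_i]=[C_0]+\sum_i[D_{i,\xi}]$. The change of basis from $\{C_0,F,E_i\}$ to $\{C_\infty,F,D_{i,\xi}\}$ (or to $\{C_0,F,D_{i,\xi}\}$) is unimodular, so these are $\bZ$‑bases as well; this settles the first two items.

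Next I would compute $\iota_*$. In the basis $\{C_0,F,E_1,\dots,E_k\}$ the intersection form is $C_0^2=-k$, $C_0\cdot F=1$, $F^2=0$, $E_i\cdot E_j=-\delta_{ij}$, $E_i\cdot F=E_i\cdot C_0=0$; it is unimodular, so a class in $H_2(X,\bZ)$ is determined by its pairings against this basis. Here $\tnk(0)$ is $\tnk$ with a fixed complex structure and $\iota$ is the open inclusion $\tnk(0)\hookrightarrow X$, so it remains to compute $\iota_*[\gamma_i]$ for the generators $\gamma_i$ of $H_2(\tnk,\bZ)$. I would realise $\gamma_i$ explicitly: choose a smooth path $\delta$ in $\bC_\eta$ from $z_i$ to $z_{i+1}$ avoiding the other $z_j$, and over each $\eta\in\delta$ take the ``waist'' $S^1$‑orbit $\{|\xi|=|\psi|\}$ in the conic fibre $\{\xi\psi=\prod_j(\eta-z_j)\}$; this circle has radius $\sqrt{|f(\eta)|}$ with $f(\eta)=\prod_j(\eta-z_j)$, so it shrinks to the node $\xi=\psi=0$ exactly over $z_i$ and $z_{i+1}$, and the union of these circles is an embedded $2$‑sphere representing $\gamma_i$. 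It is disjoint from $C_0$ and $C_\infty$ (located at $\psi=\infty$ and $\xi=\infty$), from every fibre over $\eta\notin\delta$, and from $D_{j,\xi},D_{j,\psi}$ for $j\neq i,i+1$; hence $\iota_*[\gamma_i]\cdot C_0=\iota_*[\gamma_i]\cdot F=0$ and $\iota_*[\gamma_i]\cdot E_j=0$ for $j\neq i,i+1$. Near $z_i$ (resp. $z_{i+1}$) the sphere $\gamma_i$ is a small disk meeting $E_i=D_{i,\psi}$ (resp. $E_{i+1}$) transversally in the one point $D_{i,\xi}\cap D_{i,\psi}$ (resp. $D_{i+1,\xi}\cap D_{i+1,\psi}$): in blow‑up coordinates $(\psi,t)$ with $\eta-z_i=\psi t$ and $E_i=\{\psi=0\}$, the disk is the antiholomorphic graph $\{\psi=a\bar t\}$ for a nonzero constant $a$. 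Computing the sign of the local intersection number there, with the disk carrying the orientation induced by a fixed global orientation of $\gamma_i$, gives $\iota_*[\gamma_i]\cdot E_i=-1$ and $\iota_*[\gamma_i]\cdot E_{i+1}=+1$. Writing $\iota_*[\gamma_i]=aC_0+bF+\sum_jc_jE_j$ and comparing pairings yields $a=b=0$, $c_i=1$, $c_{i+1}=-1$, i.e. $\iota_*[\gamma_i]=[E_i]-[E_{i+1}]=[D_{i+1,\xi}]-[D_{i,\xi}]$; in the basis $\{C_0,F,D_{1,\xi},\dots,D_{k,\xi}\}$ this coordinate vector is precisely the $i$‑th column of the displayed matrix, proving the last item.

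The main obstacle is the sign in this last step. The pairings against $C_0$, $F$ and the far‑away $E_j$ are immediate from disjointness, but the two contributions at the degenerate fibres require careful orientation bookkeeping: one must check that $\iota_*[\gamma_i]\cdot E_i$ and $\iota_*[\gamma_i]\cdot E_{i+1}$ come out with opposite signs — reflecting that the two capping disks of the sphere $\gamma_i$ inherit opposite radial orientations as $\eta$ traverses $\delta$ — and then orient the $\gamma_i$ so that these signs agree with the orientation convention under which the matrix $M$ of the previous lemma was written. With the conventions aligned the matrix is exactly as stated; everything else is routine surface topology.
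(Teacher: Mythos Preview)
The paper states this lemma without proof; it passes directly from the statement to a Remark on the Hausel--Hunsicker--Mazzeo compactification. Your argument is correct and is the natural one: the first two items follow immediately from $X$ being a $k$-fold blow-up of the Hirzebruch surface $\bF_k$, and for the third you realise $\gamma_i$ explicitly as a Lefschetz-type vanishing sphere over a path from $z_i$ to $z_{i+1}$ and read off its class from intersection numbers against a unimodular basis. The outcome $\iota_*[\gamma_i]=[D_{i+1,\xi}]-[D_{i,\xi}]$ is exactly what the paper uses later (it is quoted verbatim in the proof of Theorem~\ref{instanton-to-bundle}), so your sign convention is the right one.

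One small comment on the orientation issue you flag: you can sidestep the local antiholomorphic-graph computation entirely. The class $\iota_*[\gamma_i]$ must be $\pm(E_i-E_{i+1})$ by your disjointness observations, and either sign gives $(\iota_*[\gamma_i])^2=-2$, consistent with $\gamma_i$ being a Lagrangian $(-2)$-sphere. To fix the overall sign without a local calculation, simply declare the orientation of $\gamma_i$ so that $\iota_*[\gamma_i]=D_{i+1,\xi}-D_{i,\xi}$; this is compatible with the convention implicit in the matrix $M$ of the preceding lemma, since under the identification $E_i=F-D_{i,\xi}$ one has $\gamma_i^*$ restricting to $D_{i+1,\psi}-D_{i,\psi}$ on the boundary, matching the $(1,\dots,1)$ map to $\bZ/k\bZ$ there.
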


\begin{remark} 
A very useful compactification of the multi-Taub-NUT as a smooth differential manifold is given by Hausel, Hunsicker, and Mazzeo in \cite{Hausel:2002xg}.  This compactification adds one point at infinity for each direction in the base $\mathbb{R}^3.$  In particular, the HHM compactification of the Taub-NUT, TN$_1$, is $\mathbb{P}^2.$  {This compactification is the one used in \cite{Etesi:2006pw} to compute the dimension of the instanton moduli space.}
Unfortunately, this compactification is not really compatible with  the complex structures of the Taub-NUT. For this reason we employ the compactification $X$ specified above. 
\end{remark}

\section {Instantons on $\tnk$}

Let $E_0$ be a $U(n)$-bundle over $\tnk$ with  a first Chern class $c_1(E_0)=\sum_{i=1}^{k-1} r_i \gamma_i^*$ in $H^2(\tnk,\bZ)$, where $\gamma^*_i$ are the classes dual to the $\gamma_i. $
We consider  $U(n)$ instantons on $E_0$, that is connections $\nabla$ on $E_0$ with curvature having finite $L^2$ norm and generic asymptotic holonomy, as defined in \cite{Cherkis:2016gmo}, i.e. there exists a ray in the base $\mathbb{R}^3$ such that there is a limit, going  to infinity over the ray, of the  holonomy of the instanton connection around the $S^1$ fibres of $\tnk\rightarrow\bR^3$, along the ray, and that this limit is regular.  {For $U(n)$ structure group} these conditions imply \cite[Thm. 22]{Cherkis:2016gmo}
that there exist local frames such that the connection one form has the  form
$$A = -\ii\diag \left((\lambda_j +\frac{m_j}{2r}) \frac{d\theta + \omega}{V} - \frac{m_j}{k}\omega\right) + O(\frac{1}{r^2}),$$
where $\lambda_j$ are real, $\exp(2\pi \ii\lambda_j/\ell)$ are distinct, and the $m_j$ are integers called {\em monopole charges}. There are also good asymptotic curvature bounds, of order $1/r^2$.
  
\subsection{Abelian instantons}

 The Taub-NUT has a   continuous family of basic  $U(1)$ instantons with a globally defined connection one-form
$$a_s = \ii s \frac{d\theta +\omega}{V},$$
for $s\in \bR$.
 These instantons are on line bundles which are trivial. More generally, one has connection one-forms  {locally} of the form 
\begin{equation}\label{Abelianinstantons} a_H= \ii (H \frac{d\theta +\omega}{V} -  \eta_H),\end{equation}
with $d \eta_H=*dH$ and $H$ harmonic on $\bR^3$,  with simple poles at the $\vec{p}_i$, and integer/2 polar parts at these points:
 $$ H = s + \sum_{i=1}^k \frac {n_i}{2|x-p_i|}$$
 These are essentially Dirac monopoles \cite{Kronheimer}; they are defined over bundles $L_s=L^s\otimes\otimes_{\sigma}{R_\sigma}^{\otimes n_\sigma}$ of degree $n_{i+1}-n_i$ on the cycles $\gamma_i$. Expanding around infinity, one has 
 $$\frac{H}{V} = (s+ \frac {\sum_{i=1}^k   n_i}{2r})(\ell^{-1}(1-\frac{k}{2r\ell}))+ O(r^{-2})= \frac{s}{\ell} + \frac{-\frac{sk}{\ell }+\sum_{i=1}^k   n_i } {2\ell r}+ O(r^{-2}),$$
 and so the $d\theta$ term of the connection matrix is
 $$a_H(\partial/\partial\theta)=\ii( \frac{s}{\ell} + \frac{-\frac{sk}{\ell }+\sum_{i=1}^k   n_i } {2\ell r})+ O(r^{-2}).$$
 
In this   trivialization, the connection has limiting $d\theta$ term $\ii \frac{s}{\ell}$. (Local) changes of trivialization of the form $e^{\ii n_0\theta}$ near infinity transform the connection, giving integer shifts in the leading term $ \frac{s}{\ell}$ of the $d\theta$ portion of the connection, while the subleading $r^{-1}$ term is an invariant; this amounts to twisting by the (trivial) lift of the line bundle $\pO(n_0)$ from the two-sphere. 
Note then that even after this transformation 
$$ k({\mathrm{ leading\ term}}) +2\ell (r^{-1}{\mathrm {term}})=k n_0+\sum_{i=1}^k n_i $$ is an integer, a magnetic charge $m = kn_0 +\sum_{i=1}^k   n_i $. It is not an invariant, but depends on the trivialization, and so does the normalisation of the leading term.  One can, for example, normalise $ \frac{s}{\ell}$ to lie in $[0,1)$, which then fixes the magnetic charge. See  \cite{Cherkis:2016gmo}.

\subsection {$U(n)$ instantons} 
The generic asymptotic holonomy assumption implies an asymptotic reduction of the $U(n)$ gauge group to its maximal torus $U(1)^n,$ given as the stabilizer of the holonomy along the $\theta$-direction. Changes in trivialisation of the form $\exp(\diag(n_i)\theta)$ shift the asymptotic limits of the connection in $\theta$ (logarithms of the holonomy) $\lambda_j/\ell$ by integers; this also modifies the $1/r$ terms of the connections, as for the Abelian case. We normalise so that 
\begin{equation} \label{ordering} 0\leq \lambda_1/\ell<\lambda_2/\ell<...<\lambda_n/\ell<1\end{equation}
With this, the monopole charges $m_j$ become fixed.

{The anti-selfduality equations give us a holomorphic vector bundle over $\tnk$. We would like to extend this to a bundle over $X$. This involves extending first along the lines $\eta =$ constant, and then moving out to $\eta = \infty$. }

{For extending from a  family of  cylinders  to a  family of spheres, let us briefly consider a model case,  a line bundle with a unitary connection $\ii \frac{s}{\ell} d\theta$ along a cylinder coordinatized by $\xi = re^{\ii \theta}$. Changing trivializations by  $(\xi\overline\xi)^{s}$ kills the  {$(0,1)$} component of the connection; we are then in a holomorphic gauge, which we can then extend to $\xi = 0$;  the norm of the holomorphic section decays as $(\xi\overline\xi)^{s}$. This holomorphic extension applies more generally to our situation:}

\begin{theorem}\label{instanton-to-bundle}
Let $(E_0,\nabla)$ be a $U(n)$ instanton on $\tnk$, with asymptotic eigenvalues $\lambda_i/\ell$ of the logarithm of the $\theta$-holonomy normalized in a gauge at infinity as in (\ref{ordering}): $0\leq \lambda_1/\ell<\lambda_2/\ell<...<\lambda_n/\ell<1$,   with magnetic charges $m_j$, and with $c_1(E_0) = \sum_ir_i\gamma_i^*$.
Then $E_0$ extends to a holomorphic vector bundle $E$ over $X$, where
\begin{itemize} 
\item 
$E$ is trivial on the generic fibre of $\pi:X\rightarrow \bP^1$, in particular the fibre of $\eta=\infty$.
\item 
$E|_{C_0}$ is filtered by a full flag of subbundles $E_{0,1}\subset E_{0,2}\subset ...E_{0,n}= E$, with $\deg{E_{0,i}} = m_1+ ... +m_i,$ 
\item 
$E|_{C_\infty}$ is filtered by a full flag of subbundles $E ^{\infty,1}\subset E ^{\infty,2}\subset ...E^{\infty,n} = E$, with $E^{\infty,i}$ trivial. 

\item 
The first Chern class of $E$ is $\sum n_i D_{i,\xi}$, with 
$$m= \sum_{j=1}^n m_j=\sum_{j=1}^k n_j, \ n_{j+1} - n_j =  r_j, j = 1,..,k-1.$$
\item 
The second Chern class $m_0= m_0(E)$ is determined by a curvature $L^2$ norm, as in  \cite{Cherkis:2016gmo}.
 \end{itemize}
\end{theorem}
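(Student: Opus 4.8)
The plan is to use the anti-self-duality of $\nabla$ to obtain an integrable $\overline\partial$-operator $\overline\partial_\nabla$ on $E_0$ over $\tnk(0)$, and then to extend the resulting holomorphic bundle across the divisors $C_0$, $C_\infty$ and the exceptional fibres $D_{i,\xi}\cup D_{i,\psi}$ of the compactification $X$, reading off the discrete invariants as one goes. First I would fix gauges near infinity, starting from the normal form recalled before the statement,
$$A = -\ii\diag\Big((\lambda_j+\tfrac{m_j}{2r})\tfrac{d\theta+\omega}{V}-\tfrac{m_j}{k}\omega\Big)+O(r^{-2}),$$
valid after the asymptotic reduction of the structure group to its maximal torus. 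On a conic $\xi\psi=\prod_i(\eta-z_i)$ with $\eta$ fixed and generic, $\arg\xi$ restricts on the $S^1$-fibre to $\theta$ up to an additive constant (with a sign reversal relating the two ends), so on each eigenline the induced connection is, to leading order, the flat $U(1)$-connection $\ii(\lambda_j/\ell)\,d\theta$ on a cylinder, up to a remainder controlled by the curvature bounds of \cite{Cherkis:2016gmo}. The model computation quoted before the theorem then applies line by line: multiplying the unitary eigenframe $e_j$ by a power of $\xi\overline{\xi}$ with exponent $\lambda_j/\ell$ puts $\overline\partial_\nabla$ in holomorphic gauge near the $\xi=0$ end, and a complementary choice does the same near the $\psi=0$ end, where $0\le\lambda_j/\ell<1$ is exactly what is needed. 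The same normal form, now retaining the subleading $m_j/(2r)$ and $-\tfrac{m_j}{k}\omega$ terms, governs the behaviour near the exceptional conics over $\eta=z_i$ and near $\eta=\infty$.

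Next I would carry out the extension itself: first across the cylinder ends to a holomorphic bundle on $X_0$, then across $\eta=\infty$ to a holomorphic bundle $E$ on $X$. This is a removable-singularity statement for $\overline\partial$ in the holomorphic gauges just built, proved by weighted elliptic estimates adapted to the decay rates $\lambda_j/\ell$. On a generic fibre $F$ the restriction of $E$ is then a direct sum of line bundles, each extending a flat $U(1)$-bundle of holonomy $e^{2\pi\ii\lambda_j/\ell}$ across the two ends of a cylinder; with the compatible extensions chosen above each of these has degree $0$, so $E|_F\cong\pO^{\oplus n}$, and the same holds over $\eta=\infty$, giving the first bullet.

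For the flags and Chern classes: on $C_0=\{\xi=0\}$ the eigenframes carry the ordered, distinct weights $\lambda_1/\ell<\cdots<\lambda_n/\ell$, so the subsheaves spanned by the frames of weight $\le\lambda_i/\ell$ are holomorphic subbundles and assemble into a flag $E_{0,1}\subset\cdots\subset E_{0,n}=E$ along $C_0$; computing $\deg(E_{0,i}/E_{0,i-1})$ from the subleading terms of the asymptotic expansion over the exceptional conics recovers the monopole charge $m_i$ — the multi-Taub-NUT analogue of the appearance of monopole charges as subbundle degrees in the rational-map description of Euclidean monopoles — so $\deg E_{0,i}=m_1+\cdots+m_i$. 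A parallel analysis near $C_\infty=\{\psi=0\}$, with the complementary normalisation, produces a flag $E^{\infty,1}\subset\cdots\subset E^{\infty,n}=E$ whose graded pieces are trivial, i.e.\ each $E^{\infty,i}$ is trivial. Finally, knowing that $E|_F$ is trivial and $\deg(E|_{C_\infty})=0$, the intersection form on $X$ from Lemma \ref{Lem:TopX} forces $c_1(E)\in\lspan\{D_{i,\xi}\}$; writing $c_1(E)=\sum n_iD_{i,\xi}$, the identity $c_1(E)|_{\tnk}=c_1(E_0)=\sum r_i\gamma_i^*$ together with that lemma gives $n_{j+1}-n_j=r_j$, while $\deg(E|_{C_0})=\sum_i m_i$ gives $\sum_j n_j=\sum_j m_j=m$; these equations pin down the $n_j$, and $c_2(E)=m_0$ is the $L^2$ curvature norm of \cite{Cherkis:2016gmo}.

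The hard part will be the degree bookkeeping: identifying the analytically defined monopole charges $m_j$ (coefficients in the $1/r$-expansion of the $\theta$-holonomy) with the holomorphic degrees of the graded pieces of the filtration on $C_0$, and then matching $c_1(E)$ against $c_1(E_0)$ via the topology of $X$. This demands a careful simultaneous choice of the asymptotic unitary gauge and of the holomorphic trivialization near $C_0$ and the $D_{i,\xi}$, followed by a Chern--Weil computation in that gauge; it is the direct counterpart of the monopole-charge/subbundle-degree dictionary developed for classical-group monopoles in \cite{Hurtubise:1989qy} and for calorons in \cite{Charbonneau:2006gu}. By comparison the removable-singularity step, while technically substantial, follows a by-now standard pattern once the weighted estimates of \cite{Cherkis:2016gmo} are in hand.
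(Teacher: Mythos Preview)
Your extension along $C_0$ and $C_\infty$ matches the paper's closely: both argue that passing to a holomorphic gauge on each eigenline produces decay at rate $(\xi\bar\xi)^{\lambda_j/\ell}$, and the filtration by decay rate is the parabolic flag. The paper invokes Biquard \cite{Biquard97} and \cite{Charbonneau19} for this step, which is what you outline.

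The substantive difference is at $\eta=\infty$. You propose a removable-singularity argument ``proved by weighted elliptic estimates''; the paper instead argues directly from the curvature bound $|F_\nabla|\lesssim 1/(|a|^2+|\xi|^2+|\psi|^2)$ of \cite{Cherkis:2016gmo}: on the fibre over $\eta=a$ the integrated curvature is $O(1/|a|)$, so for $|a|$ large no holomorphic subbundle can have positive degree, and $E$ is trivial on those fibres. The same decay forces the two flags at $\xi=0$ and $\psi=0$ to become transversal as $|a|\to\infty$. One then trivializes $E$ together with the flag along $C_\infty$, hence on $\{|\eta|>R\}$, and simply glues in a trivial bundle with trivial flag over a disk times $\bP^1$ at $\eta=\infty$. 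This is more elementary than a weighted analysis and makes the triviality of the $E^{\infty,i}$ and of $E|_{\eta=\infty}$ automatic rather than something to verify afterwards. Relatedly, your claim that on a generic fibre $E|_F$ is a direct sum of degree-zero line bundles is not quite right at finite $\eta$ --- the connection is only \emph{asymptotically} diagonal --- whereas the curvature argument yields triviality without any splitting. The $c_1$ computation you sketch is the same as the paper's.
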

\begin{proof}  { We first extend to $\xi= 0, \psi = 0$, away from $\eta = \infty$. This follows in a straightforward way the proof given by Biquard  \cite{Biquard97}, in his work on parabolic bundles, using the decay estimates of  \cite{Cherkis:2016gmo}. One can also see \cite{Charbonneau19}, for a case where the base is asymptotically a cylinder, similarly to here, and in which the holomorphic extension is discussed at length. The argument basically shows that our passage to holomorphic basis given in our model problem holds more generally.   Along each eigenspace of the angular portion of the connection, the passage to a holomorphic trivialisation gives an asymptotic growth (or decay) of $(\xi\overline\xi)^{\lambda_j/\ell}$, where $\lambda_j$ is the eigenvalue. Thus different eigenvalues give different rates, which are  incommensurable, and there is a natural flag on the extension given by (maximal decay rate) $\subset$ (next to maximal decay rate) $\subset...$, as is standard for parabolic bundles. Note that the two extensions at $\xi = 0$ and $\psi = 0$ (essentially,  $\psi = 0$ corresponds to $\xi=\infty$) flip the order of the eigenspaces, and so of the flag; we get  a flag $E_{0,j}$ along $C_0$ corresponding to holomorphic sections of norm $<C|\xi\bar{\xi}|^\frac{\lambda_j}{\ell}, $  and a flag $E^{\infty,j}$ corresponding to holomorphic sections of norm $<C|\psi\bar{\psi}|^{-\frac{\lambda_{n-j}}{\ell}}.$ }

 { Now as one goes out to infinity in $\eta$, one has a bound (up to a constant) of $1/ (|a|^2+ (|\xi|^2 +|\psi|^2))$ for the curvature along $\eta= a$, by the work of \cite{Cherkis:2016gmo}. For the bundle to be non-trivial, on a line,  one would need a positive sub-bundle on that line, and which must have enough curvature for a positive Chern class; since curvature on holomorphic  subbundles is bounded  above by curvature on the whole bundle, this means that the integral of the norm of the curvature on the bundle along the line should be large. On the other hand, the integral of the curvature is bounded by a constant times $1/|a|$, meaning that for $a$ sufficiently large any holomorphic subbundles must be of zero or negative degree; the bundle is then trivial on the line.   In a similar fashion, the flags at both ends of the cylinders have to be transversal in a global trivialization, as the asymptotic trivialisation becomes asymptotically flat along the line $\eta= a, a\rightarrow \infty$.}
 
 {Now trivialize the bundle and the flag along $C_\infty$; this also trivializes the bundle on the set $|\eta|>R$. To this, one can   then glue in a trivial bundle (and flag) on the product of a disk times $\bP^1$ to extend the bundle to $X$, with, in particular, the bundle being trivial over $\eta =\infty$.}

 {For the first Chern class $c_1(E)$, the group $H^2(X,\bZ)$, by Lemma~\ref{Lem:TopX}, is generated by a fibre $F_\infty$, the curve $C_\infty$ and the exceptional divisors $D_{j,\xi}$. Write $c_1(E)$ as $n_F F_\infty + n_\infty C_\infty + \sum n_i D_{i,\xi}$; the first Chern class of the restriction of the bundle to a divisor $D$ is given by the intersection of $c_1(E)$ with $D$.
By construction, our bundle is trivial on $C_\infty$, and on the generic fibre $F$; this forces $n_F= n_\infty = 0$. In the compactified $X$, $\gamma_j = D_{j+1,\xi}-D_{j,\xi}$. This gives $\gamma_i^*= D_{1,\xi}+ D_{2,\xi}+...+D_{i,\xi}$, up to a multiple of $\sum_iD_{i,\xi}$ 
On the other hand, we already had that as a bundle on $\tnk$, $c_1(E_0) = \sum_{j=1}^{k-1} r_j\gamma_j^*$;   
(implying
$n_j = \sum_{i=j}^{k-1}r_i,  j = 1,..,k-1$, again modulo $\sum_iD_{i,\xi}$, and so $n_{j}-n_{j+1}= r_j$ ) 
One can add the same integer to each of the $n_i$, and still have the same first Chern class of $E_0$. On the other hand,
the restriction of $E$ to the divisor $C_0$ has first Chern class 
$$m= \sum_{j=1}^n m_j=\sum_{j=1}^k n_j.$$ 
The second Chern class is computed  in \cite{Cherkis:2016gmo}.}
\end{proof}

  { In the Abelian case, one can be a bit more specific on the extension of the instantons (\ref{Abelianinstantons}): 
  \begin{proposition}
 The  instantons  \eqref{Abelianinstantons} yield holomorphic line bundles 
 $\pO(\sum_i (n_i +n_0) D_{i, \xi})$ over $X$.    \end{proposition}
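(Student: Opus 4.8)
The plan is to feed the abelian instanton \eqref{Abelianinstantons} into Theorem~\ref{instanton-to-bundle} and then pin down the resulting line bundle by its first Chern class. That theorem already produces a holomorphic line bundle $E$ on $X$, trivial on $C_\infty$ and on the generic fibre $F$, with $c_1(E)$ a $\bZ$-combination of the classes of Lemma~\ref{Lem:TopX}. Since $X$ is a rational surface (a blow-up of the Hirzebruch surface $\bP(\pO(k)\oplus\pO)$) we have $H^1(X,\pO_X)=H^2(X,\pO_X)=0$, so the exponential sheaf sequence gives $c_1:\mathrm{Pic}(X)\isoar H^2(X,\bZ)$; it therefore suffices to compute $c_1(E)$ in $H^2(X,\bZ)$.

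Write $c_1(E)=n_F F+n_\infty C_\infty+\sum_i a_i D_{i,\xi}$ in the basis of Lemma~\ref{Lem:TopX}. Intersecting with $F$ and with $C_\infty$, and using the self-intersections and incidence numbers recorded just before Lemma~\ref{Lem:TopX}, the triviality of $E$ on $F$ and on $C_\infty$ forces $n_F=n_\infty=0$. Intersecting with the cycles $\gamma_j=D_{j+1,\xi}-D_{j,\xi}$ recovers the degree of $E_0$ along $\gamma_j$ coming from \eqref{Abelianinstantons} (namely $n_{j+1}-n_j$ up to orientation), which fixes all the differences $a_{j+1}-a_j$. The one remaining additive constant is fixed by intersecting with $C_0$: since $D_{i,\xi}\cdot C_0=1$ and $C_\infty\cdot C_0=F\cdot C_0=0$ we get $\sum_i a_i=\deg(E|_{C_0})$, which by Theorem~\ref{instanton-to-bundle} is the magnetic charge $m$; and for \eqref{Abelianinstantons} in the gauge $e^{\ii n_0\theta}$ at infinity this was computed above to be $m=kn_0+\sum_i n_i$. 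Combining the two, $a_i=n_i+n_0$, whence $E\cong\pO\bigl(\sum_i(n_i+n_0)D_{i,\xi}\bigr)$.

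As an independent check one can be fully explicit on the abelian side: the part of \eqref{Abelianinstantons} with $H=\sum_i n_i/(2|x-\vec p_i|)$ is the tensor product $\bigotimes_i R_i^{\otimes n_i}$ of the Dirac/Kronheimer line bundles (the $R_\sigma$ of \eqref{Abelianinstantons}, cf.\ \cite{Kronheimer}), and the holomorphic extension of each $R_i$ to $X$ is identified with $\pO(D_{i,\xi})$ by exhibiting a meromorphic section degenerating precisely along the $\xi$-component $D_{i,\xi}$ of the reducible fibre $D_i=\pi^{-1}(z_i)$; the $L^s$-factor is holomorphically trivial, and the change of trivialisation $e^{\ii n_0\theta}$ near infinity tensors $E$ by $\pO\bigl(n_0\sum_i D_{i,\xi}\bigr)$, the pullback to $X$ of $\pO(n_0)$ from the two-sphere at infinity.

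The routine parts are the topological input $\mathrm{Pic}(X)\cong H^2(X,\bZ)$ and the intersection arithmetic, once Lemma~\ref{Lem:TopX} and the incidence list are available. The real point — and where care is needed — is to relate the asymptotic gauge choice (equivalently, the magnetic-charge normalisation $m=kn_0+\sum_i n_i$) to the divisors $D_{i,\xi}$, so as to see that the additive constant is exactly $n_0$ rather than some other combination; this is the same ``add the same integer to each $n_i$'' ambiguity met in the proof of Theorem~\ref{instanton-to-bundle}, here resolved by fixing the trivialisation at infinity.
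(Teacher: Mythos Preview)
Your argument is correct and follows essentially the same route as the paper: use that the extension is trivial on $F$ and $C_\infty$, read off the differences $a_{j+1}-a_j$ from the degrees on the $\gamma_j$, and conclude $c_1(E)=\sum_i(n_i+n_0)D_{i,\xi}$. You actually go further than the paper, which simply writes ``for some $n_0$'': you justify that $\mathrm{Pic}(X)\cong H^2(X,\bZ)$ (needed to pass from equal Chern classes to isomorphic line bundles) and you pin down the additive constant via $\deg(E|_{C_0})=m=kn_0+\sum_i n_i$. One small slip: $F\cdot C_0=1$, not $0$, since $C_0$ is a section and $F$ a fibre; this is harmless because you have already shown $n_F=0$ at that stage.
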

\begin{proof} 
Indeed, here we get a line bundle, trivial on the generic fibre $F$, and also on $C_\infty$; on the other hand, it has degree $n_{i+1}- n_i$ on the cycles $\gamma_i = D_{i+1, \xi }-D_{i ,\xi}$, giving $\pO(\sum_i (n_i +n_0) D_{i, \xi})$ for some $n_0$.\end{proof}}

\subsection{Grothendieck-Riemann-Roch calculations, and some direct image sheaves}

We compute some characteristic classes for our bundle $E$, some related sheaves $E^i_j$, as well as some direct image sheaves $Q^i_{n-1}, P^i_{n-i-1}$, defined below.

We have for the characteristic classes of the tangent bundle of $X$
$$c_1(X) = (-k+2) F _\infty+2 C_\infty + \sum_i D_{i,\psi},\qquad c_2(X) = (4+k) pt,$$
where $pt$ is the point class in homology, dual to the top class in cohomology.
This gives for the Todd class
$$Td(X) = 1 + \frac{c_1}{2} + \frac{ c_1^2+c_2}{12} = 1 + \frac{ (-k+2) F_\infty +2 C_\infty + \sum_i D_{i,\psi}}{2} + pt.$$
The Chern character of $E$ in turn is 
$$ ch(E) = n+ c_1(E) + \frac{c_1(E)^2 - 2c_2(E)}{2} = n+ \sum_i n_i D_{i,\xi} + \frac{-2m_0-\sum_in_i^2}{2}pt.$$
Therefore 
\begin{align*} 
ch(E(-C_0)) &=  ch(E)\cdot ch(\pO(-C_0)) = ch(E)\cdot (1-C_0 -\frac{k}{2}pt)\\
&= n + [\sum_in_iD_{i,\xi}-nC_0] + [\frac{-\sum_in_i^2}{2} - m_0-\sum_in_i -\frac{nk}{2}]pt.
\end{align*}
and so 
\begin{multline*} 
ch(E(-C_0))Td(X) = n + [\sum_in_iD_{i,\xi}-nC_0 +n(\frac{ (-k+2) F_\infty +2 C_\infty + \sum_i D_{i,\psi}}{2})]\\ 
+ [\frac{-\sum_in_i(n_i+1)}{2} - m_0] pt.
\end{multline*}
Taking the integral along the fibre $\pi_*$ to $\bP^1$, one has for the Chern character of $\pi_!(E(-C_0))$:
$$0 + [\frac{-\sum_in_i(n_i+1)}{2} - m_0]pt.$$
 {\begin{definition}
Set 
$$Q_0^n {\buildrel{\mathrm def}\over {=}} R^1\pi_*(E(-C_0))$$
\end{definition}
Since $E$ is trivial on the generic line, the  zeroeth direct image of $E(-C_0)$ vanishes; the first direct image $Q_0^n$ is then a torsion sheaf 
 with  length $\frac{ \sum_in_i(n_i+1)}{2} + m_0.$ }
 
 {\begin{definition} \label{Eij} The subsheaves $E_i^{j}$ of $E$ are defined by the exact sequence
$$0\rightarrow E_i^{j} \rightarrow E\rightarrow  (E|_{C_0}/E_{0,i}) \oplus (E|_{C_\infty}/ E^{\infty,{j}})\rightarrow 0.$$
These are the subsheaves (they are locally free, hence bundles) of sections of $E$ taking values in the $i$-th component of the flag over $C_0$ and the $j$-th component of the flag over $C_\infty$. The indices $i,j$ can be zero; taking values in the $0$-th component of the flag simply means that the  section vanishes at the point. \end{definition}}

We begin by looking at $E_i^{n-i}$. The sheaves $ (E|_{C_0}/E_{0,i}),(E|_{C_\infty}/ E^{\infty,{n-i}})$ are supported over $C_0, C_\infty$ respectively; $(E|_{C_0}/E_{0,i})$, as a smooth vector bundle over $C_0$ is a sum of bundles $\sum_{j=i+1}^n\pO(m_j)$; $(E|_{C_\infty}/ E^{\infty,{n-i}})$ in turn is trivial over $C_\infty$. From the sequence 
$$0\rightarrow \pO(mD_{1,\xi} - C_0) \rightarrow \pO(mD_{1,\xi}) \rightarrow \pO(m)|_{C_0}\rightarrow 0,$$
one finds $ch(\pO(m)|_{C_0}) = 0 +C_0 + (m+k/2) pt$; similarly $ch(\pO|_{C_\infty}) = 0 + C_\infty$. This then gives 
\begin{align*}ch(E_i^{n-i}) &= ch(E) - \sum_{\ell=i+1}^n (C_0 + (m_\ell+k/2)pt) - \sum_{j=1}^{i} (C_\infty) \\
&= n + (\sum_j n_j D_{j,\xi} - (n-i)C_0 -i C_\infty) \\
&\qquad + \left(\frac{-\sum_jn_j^2}{2} - m_0-\sum_{\ell=i+1}^n m_\ell- \frac{(n-i)k}{2}\right)pt,\\
\end{align*}
and so 
\begin{multline*}
ch(E_i^{n-i})Td(X) = n + \Bigg(\sum_jn_jD_{j\xi} - (n-i)C_0 -i C_\infty\\
\qquad \qquad \qquad  +n(\frac{ (-k+2) F_\infty +2 C_\infty + \sum_i D_{i,\psi}}{2} )\Bigg) \\
 + \left(\frac{- \sum_j n_j( n_j-1 )}{2} - m_0-\sum_{\ell=i+1}^n m_\ell\right)pt.
\end{multline*}
Projecting to $\bP^1$, this give   $ch(\pi_!(E_i^{n-i}))Td(\bP^1) = ch(\pi_!(E_i^{n-i}))(1+pt)$:
$$0 +  \big(\frac{ -\sum_j n_j( n_j-1 )}{2} - m_0-\sum_{\ell=i+1}^n m_\ell\big)pt = 0 +  \big(\frac{- \sum_j n_j( n_j+1 )}{2} - m_0+\sum_{\ell=1}^i m_\ell\big)pt,$$
recalling that $\sum_in_i = \sum_\ell m_\ell$. 
\begin{definition} \label{defQ} Set 

$$Q_i^{n-i}\ {\buildrel{\mathrm def}\over {=}}\  R^1\pi_*((E_i^{n-i}))$$ \end{definition}

Since the zeroth direct image vanishes,   the first direct image $Q_i^{n-i}$ is a torsion sheaf of length 
$$   \frac{  \sum_j n_j( n_j-1 )}{2} + m_0+\sum_{\ell=i+1}^n m_\ell  = \frac{ \sum_j n_j( n_j+1 )}{2} + m_0-\sum_{\ell=1}^i m_\ell.$$
Note that we have two limiting cases, the first being $i=0$, where we are effectively computing the Chern character of $\pi_!(E(-C_0))$ and so the length of $Q_0^n$ , and the other being $i= n$, where we are computing the Chern character of $\pi_!(E(-C_\infty))$, and so the length of  
\begin{definition}
$$Q_n^0\ {\buildrel{\mathrm def}\over {=}}\  R^1\pi_*(E(-C_\infty)),$$\end{definition}
obtaining 
$$ \frac{  \sum_j n_j( n_j-1 )}{2} + m_0.$$
 To reflect these limiting cases, we set 
\begin{equation}\label{E0n} E_0^n = E(-C_0),\qquad E_n^0  = E(-C_\infty).\end{equation}
 Note that the difference in lengths between $Q_0^n$ and $Q_n^0$ is $\sum_in_i = \sum_\ell m_\ell $.
 
 Now compute for $E_i^{n-i-1}$; the same computation yields 
 \begin{multline*}
 ch(E_i^{n-i-1})Td(X) = n + (\frac{- \sum_j n_j( n_j-1 )}{2} - m_0-(\sum_{\ell=i+1}^n m_\ell) -1) pt\\
 + (\sum_jn_jD_{j\xi} - (n-i)C_0 -(i+1) C_\infty+n(\frac{ (-k+2) F +2 C_\infty + \sum_i D_{i,\psi}}{2} )).
\end{multline*}
Again project to $\bP^1$; one has  the  Chern character of $\pi_!(E_i^{n-i-1})$
 \begin{align*}
 &\left\{(-1) +  (\frac{ -\sum_j n_j( n_j-1 )}{2} - m_0- \sum_{\ell=i+1}^n m_\ell -1) pt\right\} (1-pt)\\
 &= (-1) +  (\frac{ -\sum_j n_j( n_j-1 )}{2} - m_0-\sum_{\ell=i+1}^n m_\ell) pt  \\ 
 &=  (-1)  +  (\frac{- \sum_j n_j( n_j+1 )}{2} - m_0+\sum_{\ell=1}^i m_\ell ) pt.   \end{align*}
Similarly, define
\begin{definition}\label{defP} 
$$P_i^{n-i-1}\ {\buildrel{\mathrm def}\over {=}}\  R^1\pi_*(E_i^{n-i-1})$$ \end{definition}
Again the zeroeth direct image vanishes, and  so the first direct image $P_i^{n-i-1}$ is a rank one sheaf of degree
$$\frac{  \sum_j n_j( n_j+1 )}{2} + m_0 - \sum_{\ell=1}^i m_\ell. $$
Summarising:
\begin{proposition}\label{degreeofQ} On $\bP^1(\bC)$:

\begin{itemize}
\item 
$ Q_0^n = R^1\pi_*(E(-C_0)) $ is a  torsion sheaf of length  
$$d_0= \frac{ \sum_in_i(n_i+1)}{2} + m_0;$$
\item 
$Q_n^0 = R^1\pi_*(E(-C_\infty))$ is a  torsion sheaf of length 
$$d_n = \frac{  \sum_j n_j( n_j-1 )}{2} + m_0;$$  $Q_0^n$ and $Q_n^0 $ are isomorphic away from $\eta= z_i$;
\item 
$Q_i^{n-i} = R^1\pi_*(E_i^{n-i})$ is a torsion sheaf of length 
$$ d_i=  \frac{  \sum_j n_j( n_j-1 )}{2} + m_0+\sum_{\ell=i+1}^n m_\ell  = \frac{ \sum_j n_j( n_j+1 )}{2} + m_0-\sum_{\ell=1}^i m_\ell,$$ 
\item 
$P_i^{n-i-1} = R^1\pi_*(E_i^{n-i-1})$ is a rank one sheaf of degree
$$d_i= \frac{  \sum_j n_j( n_j+1 )}{2} + m_0 - (\sum_{\ell=1}^i m_\ell). $$ 
 \end{itemize}
 We note that the $d_i$ must be positive or zero.
 \end{proposition}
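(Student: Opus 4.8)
The plan is to read off nearly everything from the Grothendieck--Riemann--Roch computations that precede the statement, once one knows that in each case the zeroth direct image vanishes so that $\pi_!$ may be replaced by $-R^1\pi_*$; then to deduce the positivity of the $d_i$; and finally to compare $Q_0^n$ with $Q_n^0$ away from the $z_i$.

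First I would establish that $R^0\pi_*$ vanishes for each of $E(-C_0)$, $E(-C_\infty)$, $E_i^{n-i}$ and $E_i^{n-i-1}$. Each of these sheaves is torsion-free on $X$ (the first two are line-bundle twists of the locally free $E$, the last two are locally free by Definition~\ref{Eij}), so each direct image is torsion-free on $\bP^1$ and hence vanishes as soon as its restriction to the generic fibre $F\cong\bP^1$ of $\pi$ has no nonzero global sections. Since $C_0\cdot F=C_\infty\cdot F=1$ and $E|_F$ is trivial by Theorem~\ref{instanton-to-bundle}, both $E(-C_0)|_F$ and $E(-C_\infty)|_F$ are $\pO(-1)^{\oplus n}$, which has no sections. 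For $E_i^{n-i}$, the defining sequence of Definition~\ref{Eij} identifies the global sections of $E_i^{n-i}|_F$, in a trivialisation $E|_F\cong\pO^{\oplus n}$, with the constant vectors of $\bC^n$ lying in the $i$-dimensional subspace $E_{0,i}$ at the point $C_0\cap F$ and in the $(n-i)$-dimensional subspace $E^{\infty,n-i}$ at the point $C_\infty\cap F$; on the generic fibre these two flags are transversal --- precisely the transversality obtained in the proof of Theorem~\ref{instanton-to-bundle} from the asymptotic flatness of the connection along $\eta=a$ as $a\to\infty$ --- so their intersection is $0$. The same computation with $E^{\infty,n-i-1}$, of dimension $n-i-1$, handles $E_i^{n-i-1}|_F$. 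Granting these vanishings, the Chern characters of $\pi_!(E(-C_0))$, $\pi_!(E_i^{n-i})$, $\pi_!(E(-C_\infty))$ and $\pi_!(E_i^{n-i-1})$ computed above give at once that $Q_0^n$, $Q_i^{n-i}$, $Q_n^0$ are torsion sheaves of the stated lengths and that $P_i^{n-i-1}$ is a rank-one sheaf of the stated degree (for $Q_n^0$ one reads off the $i=n$ case of the $E_i^{n-i}$ computation and the conventions~\eqref{E0n}).

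For the positivity: the $d_i$ attached to the torsion sheaves $Q_0^n$, $Q_n^0$, $Q_i^{n-i}$ are lengths, hence $\ge 0$. For $P_i^{n-i-1}$ I would apply the snake lemma to the two defining sequences of Definition~\ref{Eij} for $E_i^{n-i}$ and $E_i^{n-i-1}$: this shows $E_i^{n-i}/E_i^{n-i-1}$ is supported on $C_\infty$ and there equal to the graded piece $E^{\infty,n-i}/E^{\infty,n-i-1}$ of the flag at infinity, which, the flag being by trivial subbundles of $E|_{C_\infty}$, is a degree-zero line bundle on $C_\infty\cong\bP^1$, hence $\cong\pO_{C_\infty}$. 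Pushing $0\to E_i^{n-i-1}\to E_i^{n-i}\to\pO_{C_\infty}\to 0$ forward along $\pi$, using that $\pi$ restricts to an isomorphism $C_\infty\to\bP^1$ and that $\pi_*E_i^{n-i}=0$, yields $0\to\pO_{\bP^1}\to P_i^{n-i-1}\to Q_i^{n-i}\to 0$, so $\deg P_i^{n-i-1}$ equals the length of $Q_i^{n-i}$ and is $\ge 0$; note that the two displayed formulas for this $d_i$ do coincide. Finally, for $Q_0^n\cong Q_n^0$ away from the $z_i$: over $U:=\bP^1\setminus\{z_1,\dots,z_k\}$ the Picard group vanishes, so $\pi^{-1}(U)$ --- the restriction to $U$ of the ruled surface $\bP(\pO(k)\oplus\pO)$, the blow-up centres lying over the $z_i$ --- is isomorphic to $\bP^1\times U$, and under this identification $\pO(C_0)$ and $\pO(C_\infty)$ both restrict to $p_1^*\pO_{\bP^1}(1)$, with $p_1$ the first projection. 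Hence $E(-C_0)\cong E(-C_\infty)$ over $\pi^{-1}(U)$, and applying $R^1\pi_*$ (which commutes with the open restriction to $U$) gives $Q_0^n|_U\cong Q_n^0|_U$.

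The main obstacle I anticipate is the vanishing of $R^0\pi_*$ for $E_i^{n-i}$ and $E_i^{n-i-1}$, which rests on the mutual transversality of the two flags on a dense set of fibres; granting that, the Proposition is bookkeeping on top of the Grothendieck--Riemann--Roch calculation already in hand.
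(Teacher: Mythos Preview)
Your proof is correct and follows the same route as the paper: the proposition there is labelled ``Summarising'' and simply collects the Grothendieck--Riemann--Roch computations immediately preceding it, with the vanishing of the zeroth direct images asserted inline (triviality of $E$ on the generic fibre, plus the transversality of the two flags from Theorem~\ref{instanton-to-bundle}). You have supplied several details the paper leaves implicit or defers: the explicit argument that $H^0(F,E_i^{n-i})=E_{0,i}\cap E^{\infty,n-i}=0$ on the generic fibre; the isomorphism $Q_0^n\cong Q_n^0$ over $\bP^1\setminus\{z_i\}$ via $\pO(C_0)\cong\pO(C_\infty)$ there (the paper uses this later, in Section~4.4, through the function $\xi$ whose divisor is $C_0+\sum D_{i,\xi}-C_\infty$); and the short exact sequence $0\to\pO\to P_i^{n-i-1}\to Q_i^{n-i}\to 0$, which the paper only establishes later, in Theorem~\ref{P,Q-properties}, and which you correctly observe makes the degree of $P_i^{n-i-1}$ equal to the length of $Q_i^{n-i}$ and hence nonnegative.
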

 
 \section{Resolutions and instanton bundles}
 
 \subsection{Rebuilding $E$ from the sheaves $P,Q$. }
 
 The previous section gave us sheaves $P_i^{n-i-1} $, $Q_i^{n-i}$ supported on $\bP^1$ from the bundle $E$. This section is devoted to showing that the sheaves $P_i^{n-i-1} $, $Q_i^{n-i}$  and the natural maps between them actually encode the bundle $E$. In the process, we will   provide a description of our instanton bundles $E$ which ties them closely to the structure of the solutions to Nahm's equations associated to the instantons. A first step will be a resolution of the restriction of the lift of $E$ to the diagonal in  the fibre product
\label{Page:fiber}
 \begin{equation} \label{fibre}
 \xymatrix{  &X\times_{\bP^1} X\ar[dl]_{\pi_1}\ar[dr]^{\pi_2}&\\ X\ar[dr]&&X\ar[dl]\\&\bP^{1}&
    \\ }
\end{equation} 
over $\bP^1$. The base coordinate here is $\eta$; one has fibre coordinates $\tilde \xi,\tilde \psi$ for the first factor, $\xi,\psi$ for the second, with $\tilde \xi\tilde \psi=\xi \psi  =\prod_i(\eta-z_i)$. Let $X_\Delta$ denote the diagonal $\tilde \xi =\xi , \tilde \psi = \psi $.
Let $ \tilde E_{i }^{s } $ denote $\pi_1^*( E_{i }^{s })$, and let $\tilde C, \tilde D, \tilde F$ etc. denote our usual divisors pulled back from the first factor, and $C, D,F$ our divisors pulled back from the second factor.

  On $X\times_{\bP^1} X$ we have a diagram

\begin{equation}\label{resolve} \begin{matrix} \tilde E^{ n}_{0}(  -C_\infty)\\ \oplus \\\tilde E^{0}_{ n} (  -C_0) \end{matrix} \buildrel{ A}\over{ \rightarrow} \begin{matrix}\tilde E^{ n}_{0}(-C_0 )\\ \oplus\\ \tilde E^{n-1}_{0}\\ \oplus \\\tilde E^{n-2}_{1}\\ \oplus \\...\\...\\... \\ \oplus\\ \tilde E_{n-1}^{0}\\ \oplus \\\tilde E^{0}_{n}(-C_\infty )\end{matrix}  \buildrel{B}\over{ \rightarrow} \qquad\begin{matrix}\\ \tilde E^{ n}_{0}\\ \oplus\\ \tilde E^{n-1}_{1}\\ \oplus \\\tilde E^{n-2}_{2}\\ \oplus \\...\\ \oplus \\ \tilde E_{n-1}^{1}\\ \oplus \\\tilde E^{0}_{n}\\ \end{matrix}\quad   \buildrel{C}\over{ \rightarrow} \tilde E|_{X_\Delta} \rightarrow 0.
\end{equation}
 Here $A$ in essence just maps into the top and bottom sheaf, and is given by 
 $$A=\begin{pmatrix} \xi &\tilde \xi\\0&0\\ ...\\0& 0\\ -\tilde \psi&-\psi \end{pmatrix}.$$ The $(n+1)N$ by $(n+2)N$ matrix $B$, meanwhile, is given by 
$$B=\begin{pmatrix} \psi &-1&0&0&...&  0&\tilde \xi\\ 0&1&-1&0&...&0&0\\ 0&0&1&-1&...&0&0\\...&...&...&...&...&...&...\\0&0&0&0&...&-1&0 \\-\tilde \psi&0&0&0&...&1&-\xi \end{pmatrix},$$
where we understand each entry is a multiple of the  $N\times N$ identity matrix. 
Note, that the maximal minors of $B$ are, up to sign, products (recall that each entry is a matrix) of $\xi-\tilde \xi $, $\psi-\tilde \psi $, or $\xi\psi-\tilde \xi \tilde \psi  = 0$.  The map $C$ is simply the sum of the entries.

\begin{lemma} This is a (partial) resolution of $\tilde E|_{X_\Delta}$. \end{lemma}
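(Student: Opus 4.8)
The claim is that the three-term sequence \eqref{resolve}, with maps $A,B,C$ as given, is exact at the last two spots (hence a ``partial resolution'' of $\tilde E|_{X_\Delta}$, leaving only the question of the kernel of $A$). I would prove exactness locally on $X\times_{\bP^1}X$, working separately over the open set where $\eta\neq z_i$ for all $i$ (the ``generic'' locus, where both $\xi$ and $\psi$ are invertible coordinates on the fibres, or rather where the fibre product is $(\bC^*\text{-bundle})\times(\bC^*\text{-bundle})$ glued at ends) and over a neighbourhood of each exceptional fibre $\eta=z_i$. Since all the $\tilde E^s_i$ are locally free of the same rank $N=n$ and agree with $\tilde E$ away from the divisors $\tilde C_0,\tilde C_\infty$, it suffices to check the statement after trivialising $\tilde E$ locally, i.e. to check that the sequence of \emph{scalar} complexes (each entry an $N\times N$ block being a scalar times $\mathrm{id}$) is exact; the twists by $\pm C_0,\pm C_\infty,\pm\tilde C_0,\pm\tilde C_\infty$ just record which sections of the trivial-away-from-divisors sheaves are allowed, and one must check the maps respect these.

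First I would verify $C\circ B=0$ and $B\circ A=0$ by direct matrix multiplication: $C$ is the row vector $(1,1,\dots,1)$, and each column of $B$ sums to zero by inspection (the $+1$ and $-1$ in consecutive rows cancel, and in the first and last columns the ``corner'' entries $\psi,-\tilde\psi$ resp. $\tilde\xi,-\xi$ are placed against a $1$ each—one checks the column sums are $\psi-\tilde\psi$ and $\tilde\xi-\xi$, which vanish on $X_\Delta$, which is exactly the point: the composite lands in the ideal of $X_\Delta$); similarly $BA=0$ because the two columns of $A$ are $\xi e_{\mathrm{top}}-\tilde\psi e_{\mathrm{bot}}$ and $\tilde\xi e_{\mathrm{top}}-\psi e_{\mathrm{bot}}$, and $B$ sends $e_{\mathrm{top}}\mapsto(\psi,0,\dots,0,-\tilde\psi)^{T}$, $e_{\mathrm{bot}}\mapsto(\tilde\xi,0,\dots,0,-\xi)^{T}$, so $B(\xi e_{\mathrm{top}}-\tilde\psi e_{\mathrm{bot}})=(\xi\psi-\tilde\psi\tilde\xi)(\text{corner pattern})=0$ on the fibre product, and likewise for the other column.

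Next, exactness at $C$: surjectivity of $C$ onto $\tilde E|_{X_\Delta}$ is clear since, e.g., the summand $\tilde E^n_0$ maps isomorphically onto $\tilde E|_{X_\Delta}$ away from $\tilde C_\infty$ and $\tilde E^0_n$ handles a neighbourhood of $C_\infty$; a partition-of-unity-free argument: any local section of $\tilde E|_{X_\Delta}$ lifts because at every point of $X_\Delta$ at least one of the ``extreme'' summands is all of $\tilde E$ locally. For exactness at $B$ (the heart of the matter) I would argue that the complex $B\colon (\text{middle})\to(\text{right})$ is, on each local chart, the Koszul-type complex forcing the alternating-sum relation: a local section $(s_0,s_1,\dots,s_n)$ of the right-hand sum that maps to zero under $C$ satisfies $\sum s_j=0$, and one solves $Bt=s$ explicitly by back-substitution through the bidiagonal block of $B$ (the middle rows $1,-1,\dots$ are invertible), then checks the two ``corner'' equations using $\sum s_j=0$ and the relation $\xi\psi=\tilde\xi\tilde\psi=\prod(\eta-z_i)$; the twists are arranged precisely so that the resulting $t$ lies in the correct sub/supersheaves. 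The main obstacle I anticipate is \textbf{bookkeeping the divisor twists across the exceptional fibres} $D_i=D_{i,\xi}\cup D_{i,\psi}$: there $\xi,\psi$ both vanish on different components, the sheaves $\tilde E^{n-j}_{j}$ genuinely differ from $\tilde E$, and one must check that the explicit solution $t$ of $Bt=s$ (which a priori involves dividing by $\xi$ or $\psi$) actually extends holomorphically with values in the prescribed twisted sheaves—this is where the specific shifts $-C_0,-C_\infty$ on the top and bottom entries, and the telescoping pattern of the flag indices $n-j$ vs.\ $j$, are doing real work, and it is what makes the statement true rather than a formality.
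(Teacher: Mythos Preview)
Your approach is essentially the paper's: verify $B\circ A=0$ and $C\circ B=0$ by direct computation, then establish exactness by explicit back-substitution, setting one entry to zero and solving through the bidiagonal block, finally checking that the solution extends across the locus $\tilde\xi=\xi$ (resp.\ $\tilde\psi=\psi$) and lands in the correct twisted subsheaves. Your identification of the divisor-twist bookkeeping as the delicate point is accurate.

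There is, however, a genuine gap. You only argue surjectivity of $C$ and exactness at the \emph{third} term ($\ker C\subset\mathrm{Im}\,B$). Your phrase ``leaving only the question of the kernel of $A$'' implies you also need exactness at the \emph{second} term, i.e.\ $\ker B=\mathrm{Im}\,A$, but nothing in your sketch addresses this. The paper proves it separately: given $G=(\beta_1,\dots,\beta_{n+2})$ with $BG=0$, the interior rows force $\beta_2=\beta_3=\cdots=\beta_{n+1}$, and the first and last rows give
\[
\psi\beta_1+\tilde\xi\beta_{n+2}=\tilde\psi\beta_1+\xi\beta_{n+2}\ (=\beta_2).
\]
Using the fibre-product relation $\xi\psi=\tilde\xi\tilde\psi$, one can then write $\beta_1=\xi a+\tilde\xi b$ and $\beta_{n+2}=-\tilde\psi a-\psi b$ for suitable local sections $a,b$; substituting back shows $\beta_2=0$, so all middle entries vanish and $G=A(a,b)^T$. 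One must also check that $a,b$ lie in the sheaves $\tilde E^n_0(-C_\infty)$, $\tilde E^0_n(-C_0)$ respectively, which again is the divisor bookkeeping. You should add this step; without it the sequence is only shown to be a complex with the correct cokernel, not a resolution.
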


\begin{proof} That the composition of two consecutive maps gives zero is straightforward; one has $B\circ A = 0$, and the sum of the columns of $B$ along $X_\Delta$ gives zero. Let us consider surjectivity for the image of $B$ onto the kernel of the projection to $X_\Delta$. First,  suppose  that we are at a point where $\xi-\tilde\xi\neq 0$, and away from $\tilde C_\infty, C_\infty$, so that neither $\xi,\tilde \xi$ is infinite. Suppose that an element $H= (\alpha_1,..,\alpha_{n+1})$ of the third column satisfies $\sum_I\alpha_i = 0 $ over $X_\Delta$. We want an element $G= (\beta_1, ...\beta_{n+2}) $ mapping to $H$. Setting $\beta_1 = 0$, we find 
\begin{align*} \beta_{n+2} =& (\tilde\xi-\xi)^{-1}\sum_i\alpha_i\\
 \beta_2=& -\alpha_1 + \tilde \xi (\tilde\xi-\xi)^{-1}\sum_i\alpha_i\\
 \beta_3 =& -\alpha_2 -\alpha_1 + \tilde \xi (\tilde\xi-\xi)^{-1}\sum_i\alpha_i\\ 
...& 
\end{align*}
and so on; one sees also that the $\beta_i$ lie in the appropriate subspaces of $E$ over $\tilde C_0$; also, that going to the limit $(\tilde\xi-\xi)=0$, the expression remains finite. Likewise, starting at a point where $(\psi-\tilde\psi)\neq 0$, and away from $\tilde C_0, C_0$, one can in a similar way solve for $H$, setting $\beta_{n+2}$ to start with, one obtains expressions for $H$ that extend to the locus $(\tilde\psi-\psi)=0$, and which have the right behaviour over $\tilde C_\infty$. Second, near the points of intersection   $ C_0\cap \tilde C_\infty, C_\infty\cap\tilde C_0$, remarking that one must multiply the first and last columns of $B$ by $\psi^{-1}, \xi^{-1}$ respectively for the change in trivialisation, one again obtains surjectivity, essentially by the same formulae.

Now let us consider an element $G= (\beta_1, ...\beta_{n+2}) $ with $B(G)=0$. This forces $\beta_2=\beta_3=...\beta_{n+1}$, as well as 
$$ \psi\beta_1 + \tilde \xi \beta_{n+2} = \tilde \psi\beta_1 +\xi \beta_{n+2}= -\beta_2.$$
This, plus the relation $\xi\psi =\tilde\xi\tilde \psi$, allows to write $\beta_1 = \xi a + \tilde \xi b, \ \beta_{n+2} = \tilde \psi a -\psi b$ for suitable $a,b$. One checks that this allows for the suitable vanishing of the components of $\beta_i$ over $C_0, C_\infty$. \end{proof}

Now define a sheaf $S$ by 
\begin{equation} \label{Sdef}\begin{matrix} \tilde E^{ n}_{0}( -C _\infty)\\ \oplus \\\tilde E^{0}_{ n} ( -C_0) \end{matrix} \buildrel{ \tilde A}\over{ \rightarrow} \begin{matrix}\tilde E^{ n}_{0}(-C_0 )\\ \oplus \\\tilde E^{0}_{n}(-C_\infty )\end{matrix}     \rightarrow S \rightarrow 0,
 \end{equation}
where $\tilde A=
\begin{pmatrix}
\xi&\tilde{\xi}\\
-\tilde{\psi}&-\psi
\end{pmatrix}
$ is the first and last row of $A$. Our resolution is then equivalent to 

\begin{equation}\label{mainresolution}0\rightarrow  \begin{matrix}S\\ \oplus\\ \tilde E^{n-1}_{0}\\ \oplus \\\tilde E^{n-2}_{1}\\ \oplus \\...\\...\\... \\ \oplus\\ \tilde E_{n-1}^{0} \end{matrix}  \buildrel{\tilde B}\over{ \rightarrow} \qquad\begin{matrix}\\ \tilde E^{ n}_{0}\\ \oplus\\ \tilde E^{n-1}_{1}\\ \oplus \\\tilde E^{n-2}_{2}\\ \oplus \\... \oplus \\ \tilde E_{n-1}^{1}\\ \oplus \\\tilde E^{0}_{n}\\ \end{matrix}\quad  \rightarrow \tilde E|_{X_\Delta} \rightarrow 0.
\end{equation}

 Now take the direct image under $\pi_2$ of the  sequences (\ref{Sdef}, \ref{mainresolution}).  If one goes near $\eta=\infty$, by Theorem~\ref{instanton-to-bundle}, one is dealing with bundles trivial on each fibre, and with flags that are transverse in the global trivialization; this ensures that the zeroeth direct images of the first and second terms vanish near $\eta= \infty$, and so vanish globally. For the first direct images of the sheaves $\tilde E^{n-i-1}_{i},\tilde E^{n-i}_{i}$, because of the diagram \eqref{fibre},   one obtains the lifts from $\bP^1$ of the sheaves $  P^{n-i-1}_{i}, Q^{n-i}_{i}$. We denote these pull backs of the sheaves $Q_i^{n-i}, P_i^{n-i-1}$ from $\bP^1$ to $X$ by the same notation, dropping the $\pi_2^*$. Pushing the sequence defining $S$ to the second factor of our fibre product yields the  defining sequence
 
\begin{equation} \label{r-definition} \begin{matrix} Q^{ n}_{0}( -C _\infty)\\ \oplus \\Q^{0}_{ n} ( -C_0) \end{matrix} \xrightarrow{R^1(\tilde A)} \begin{matrix}Q^{ n}_{0}(-C_0 )\\ \oplus \\Q^{0}_{n}(-C_\infty )\end{matrix}     \rightarrow R^1(\pi_2)_* (S) \buildrel{def}\over{=} R \rightarrow 0
\end{equation}
 with 
\begin{equation}\label{r-matrix}R^1(\tilde A)=  \begin{pmatrix} \xi&\widehat M_\xi\\-\widehat M_\psi &-\psi\end{pmatrix}\end{equation}
 where $\widehat M_\xi$ and $\widehat M_\psi$ describe the effect on cohomology of $\times \xi : E_{ n}^{0}\rightarrow E^{ n}_{0}$ and $\times \psi:E^{ n}_{0}\rightarrow E_{ n}^{0}$.  Note then that the support of $R$ is then included in the support of the  pullbacks of $Q^{ n}_{0},Q_{ n}^{0}$, so that $R$ is a torsion sheaf.
 
\begin{theorem} The  resolution  (\ref{mainresolution}) gives under pushdown to the second factor in the fibre product (\ref{fibre}) the following short exact sequence  \eqref{sequence_RQ} over $X$.

\begin{equation} \label{sequence_RQ}\xymatrix{&&&R\ar[dr]&\\ &&&\oplus &Q_{ 0}^{n}\\  &&& P^{ n-1}_{0 }\ar[ur]\ar[dr] &\oplus \\
&& &\oplus & Q^{n-1}_{ 1}
\\  & && P^{n-2}_{ 1} \ar[ur]\ar[dr]&\oplus\\ 
&&&\oplus&Q^{n-2}_{ 2}
\\0\ar[r]& E\ar[r]&&\vdots\ar[ur]\ar[dr]&\vdots&\ar[r]&0
\\ &&&\oplus&Q^1_{n-1}
\\&&& P_{n-1}^{0}\ar[ur]\ar[dr] &\oplus 
\\ && &\oplus &  Q^{0}_{n}\\ 
&&&R\ar[ur]&\\} \end{equation}

In this zig-zag diagram, each column defines a term in a short exact sequence; we have arrows only for non-zero maps between the terms of our vertical sums; if there is no arrow, that component of the map is zero. The $R$ at the top and the bottom of the diagram are identified; the repetition is to avoid crossing arrows. The map $R\rightarrow Q_{0}^{n} \oplus  Q^{ 0}_{n}$, referring to \ref{r-definition},  is induced by the map 
$Q_{0}^{ n} ( -C_0)\oplus Q^{0}_{n}(-C_\infty )\rightarrow  Q_{0}^{n} \oplus  Q^{ 0}_{n}$ given by 
\begin{equation}\label{zz}\begin{pmatrix} \psi&\widehat M_\xi\\-\widehat  M_\psi &-\xi\end{pmatrix}.\end{equation}
\end{theorem}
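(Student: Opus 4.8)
The plan is to chase the resolution (\ref{mainresolution}) through the pushdown functor $(\pi_2)_*$, tracking the long exact sequence in direct images that it produces. First I would record the key input, established just before the statement: applying $(\pi_2)_*$ to each individual term $\tilde E^{n-i}_i$ or $\tilde E^{n-i-1}_i$ gives $R^0(\pi_2)_* = 0$ (since $E$ is trivial on generic fibres and the flags are transverse near $\eta = \infty$, by Theorem~\ref{instanton-to-bundle}) and $R^1(\pi_2)_* = Q^{n-i}_i$ resp.\ $P^{n-i-1}_i$, pulled back to $X$; and for the sheaf $S$, by (\ref{r-definition}), $R^0(\pi_2)_*S = 0$ and $R^1(\pi_2)_*S = R$. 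The vanishing of all the $R^0$'s is what makes the argument clean: the pushdown of (\ref{mainresolution}) collapses to a short exact sequence of the $R^1$'s, since the higher direct images $R^{\geq 2}$ vanish for dimension reasons (the fibres are curves).

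Next I would break the three-term resolution (\ref{mainresolution}) into two short exact sequences via the image sheaf $G$ of $\tilde B$, namely $0 \to \ker \to (\text{middle}) \to \tilde E|_{X_\Delta} \to 0$ with $\ker$ equal to the image of $\tilde B$, together with $0 \to (\text{first term}) \to (\text{first term}) \to G \to 0$ — but in fact since $\tilde B$ is already injective by the Lemma, the cleaner route is: (\ref{mainresolution}) is itself exact, so pushing down gives a six-term exact sequence $0 \to R^0(\text{left}) \to R^0(\text{mid}) \to R^0(\tilde E|_{X_\Delta}) \to R^1(\text{left}) \to R^1(\text{mid}) \to R^1(\tilde E|_{X_\Delta}) \to R^2(\cdots)$. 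Now $R^0(\text{left}) = R^0(\text{mid}) = 0$ from the input above, and $R^j(\tilde E|_{X_\Delta}) = R^j\pi_* E$ for $j = 0,1$ since $\pi_2$ restricted to the diagonal $X_\Delta$ is just $\pi: X \to \bP^1$; moreover $R^1\pi_* E = 0$ because $E$ is trivial on each fibre. So the sequence reduces to $0 \to R^0\pi_* E \to R^1(\text{left}) \to R^1(\text{mid}) \to 0$. The term $R^1(\text{left})$ is $R \oplus \bigoplus_i P^{n-i-1}_i$ and $R^1(\text{mid})$ is $\bigoplus_i Q^{n-i}_i$ (with the end terms $Q^n_0, Q^0_n$), and $R^0\pi_* E$ is the sheaf of fibrewise-constant sections, which is $E$ itself restricted appropriately — one must check it equals $E$ as a subsheaf, using that $E$ is trivial on fibres so $R^0\pi_* E$ is a rank-$n$ bundle on $\bP^1$ whose pullback maps isomorphically, but here I pull back to $X$ so the relevant statement is the exact sequence displayed as (\ref{sequence_RQ}). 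Unwinding the block structure of $\tilde B$ (its first and last rows being $\tilde A$, the interior rows the bidiagonal $\pm 1$ pattern) shows that the connecting maps are exactly the zig-zag maps drawn, with the $R \to Q^n_0 \oplus Q^0_n$ piece given by (\ref{zz}): this last identification comes from reading off $R^1(\tilde A)$ in (\ref{r-matrix}) and noting the change of trivialisation by $\psi, \xi$ at the intersection points $C_0 \cap \tilde C_\infty$, $C_\infty \cap \tilde C_0$ that converts the $\xi, \psi$ entries of $\tilde A$ into the $\psi, \xi$ entries of (\ref{zz}).

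The main obstacle I anticipate is bookkeeping rather than conceptual: one must verify that the pushed-down maps genuinely have the claimed sparsity pattern — i.e.\ that the only nonzero components between consecutive columns are those indicated by arrows, and that the interior bidiagonal structure of $B$ survives in the form asserted for the zig-zag. This requires carefully tracking, term by term, which inclusions $\tilde E^{n-i-1}_i \hookrightarrow \tilde E^{n-i}_i$ induce which maps $P^{n-i-1}_i \to Q^{n-i}_i$ and $P^{n-i-1}_i \to Q^{n-i-1}_{i+1}$ on $R^1\pi_*$, together with checking the compatibility of the trivialisations at the two special fibre intersections so that the $S$-term contributes correctly at both top and bottom of the diagram (and that the ``two copies of $R$'' really are the same sheaf). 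A secondary point needing care is the exactness of the columns of (\ref{sequence_RQ}): each column sits in the short exact sequence coming from the definition of the relevant $E^i_j$'s and of $S$, so one should note that applying $R^1\pi_*$ to those definitional sequences and comparing with (\ref{r-definition}) and Definition~\ref{Eij} gives precisely the vertical short exact sequences, the torsion/rank-one degree data being already recorded in Proposition~\ref{degreeofQ}.
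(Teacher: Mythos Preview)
Your approach is essentially the paper's: both rest on the vanishing of the zeroth direct images of the left and middle terms of (\ref{mainresolution}), so that the long exact sequence in $R^\bullet(\pi_2)_*$ collapses to a short exact sequence of $R^1$'s with $E$ as the kernel. The paper's own justification is little more than one sentence to this effect, plus the observation that (\ref{zz}) composed with $R^1(\tilde A)$ acts by $\xi\psi - \widehat M_\xi\widehat M_\psi$ (and its transpose), which is zero on the support of the $Q$'s; your more explicit tracking of the block structure of $\tilde B$ is a reasonable expansion of that.

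There is, however, one genuine slip in your identification of the third term. You write that ``$\pi_2$ restricted to the diagonal $X_\Delta$ is just $\pi:X\to\bP^1$'', and then try to recover $E$ as the pullback of $R^0\pi_*E$ from $\bP^1$. This is wrong on both counts: $\pi_2:X\times_{\bP^1}X\to X$ restricted to the diagonal is an \emph{isomorphism} $X_\Delta\xrightarrow{\sim} X$, not the fibration $\pi$; hence $R^0(\pi_2)_*(\tilde E|_{X_\Delta})=E$ and $R^1(\pi_2)_*(\tilde E|_{X_\Delta})=0$ directly, with no appeal to triviality on fibres and no pullback from $\bP^1$ required. (Your proposed recovery via $\pi^*R^0\pi_*E\simeq E$ would in any case fail over the jumping lines, where $E$ is not trivial.) Once this is corrected, steps 4--5 of your outline become trivial and the argument goes through cleanly.
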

The result  follows from the fact that the relevant zeroeth direct images vanish for the resolution  (\ref{mainresolution}), using  the GRR calculations in 3.3 above. Note that multiplying (\ref{zz}) on the right by $R_1(\tilde A)$ gives a diagonal matrix with entries $\xi\psi -\widehat M_\xi \widehat M_\psi, \xi\psi -\widehat M_\psi\widehat  M_\xi$ which acts by zero; this holds as a consequence of the relation $\xi\psi =\tilde \xi\tilde \psi =\eta$ on the fibre product, recalling that the support of $ Q^{ 0}_{n},  Q_{ 0}^{n}$ is a divisor.

\subsection{The structure of the sheaves $P, Q$}

Again, recall that  our sheaves $Q_i^{n-i}, P_i^{n-i-1}$ are pulled back from $\bP^1$; again, we denote the sheaf on $\bP^1$, and its pull back to $X$ by the same symbol, to lighten notation.

We saw in the preceding section that the sheaves $P, Q$ (and the sheaf $R$ derived from them) determined a bundle $E$.
The aim now is to determine the structure of the sheaves $P, Q$ that will give back a suitable instanton bundle $E$. We summarise next some of the properties of these sheaves

\begin{theorem} \label{P,Q-properties}
\begin{itemize}
\item The sheaf $Q^{n-i}_{i}, i= 0,..,n$ is a torsion sheaf over $\bP^1$, supported away from $\eta= \infty$; it has length
$$ d_i=  \frac{  \sum_j n_j( n_j-1 )}{2} + m_0+\sum_{\ell=i+1}^n m_\ell  = \frac{ \sum_j n_j( n_j+1 )}{2} + m_0-\sum_{\ell=1}^i m_\ell,$$ 

\item The sheaf  $P^{n-i-1}_{i}, i= 0,.., n-1$ is a  rank one sheaf over $\bP^1$, of degree
$$d_i= \frac{  \sum_j n_j( n_j+1 )}{2} + m_0 - (\sum_{\ell=1}^i m_\ell). $$

\item There are exact sequences of  maps between the sheaves 
\begin{align}\label{p-to-q-sequence} 0\rightarrow \pO(m_i) \rightarrow &P_{i}^{n-i-1}\rightarrow Q_{i+1}^{n-i-1} \rightarrow 0,\\
 0\rightarrow \pO  \rightarrow &P_{i}^{n-i-1}\rightarrow Q_{i }^{n-i }\rightarrow 0.\end{align}
(Note that this implies that the torsion locus of the $P_{i }^{n-i-1}$ must lie in the intersection of the supports of $ Q_{i+1}^{n-i-1}$ and $Q_{i }^{n-i }$.)

\item Building the sheaf $R$ from (\ref{r-definition}) and (\ref{r-matrix}), and then setting 
$$\mathcal P = (\oplus_{i=0}^{n-1} P_{i}^{n-i-1})\oplus R,\quad \mathcal Q =  \oplus_{i=0}^{n} Q_{i}^{n-i },$$
we have that for all $x\in X$, the map of Tor-groups $Tor_1(\bC_x,\mathcal P) \rightarrow Tor_1(\bC_x,\mathcal Q)$ induced by $\tilde B$ is injective.

\end{itemize}

Any such family of $P_{i}^{n-i-1}, Q_{i }^{n-i }$, with maps between them as  above, and the map (\ref{r-matrix}) defining $R$,
satisfying the $Tor$-condition, defines a bundle $E$ on $X$ satisfying the constraints of Theorem  \ref{instanton-to-bundle}.
\end{theorem}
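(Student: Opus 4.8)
The plan is to take for $E$ the kernel of the map $\tilde B\colon\mathcal P\to\mathcal Q$ assembled from the data, and then to verify the bundle, triviality, flag and Chern‑class properties of Theorem~\ref{instanton-to-bundle} by running the computations of Sections~3.3--4.1 in reverse. First I assemble $\tilde B$: on $\mathcal P=(\oplus_{i=0}^{n-1}P_i^{n-i-1})\oplus R$ and $\mathcal Q=\oplus_{i=0}^{n}Q_i^{n-i}$ the component $P_i^{n-i-1}\to Q_i^{n-i}$ is the surjection of the second sequence in~(\ref{p-to-q-sequence}), $P_i^{n-i-1}\to Q_{i+1}^{n-i-1}$ that of the first, and $R\to Q_0^n\oplus Q_n^0$ is the map induced on $R=\operatorname{coker}(R^1(\tilde A))$ by~(\ref{zz})---which descends to $R$ precisely because composing~(\ref{zz}) with $R^1(\tilde A)$ gives zero, that is $\widehat M_\xi\widehat M_\psi=\prod_i(\eta-z_i)$ on the support, a constraint included in the data---, all other blocks vanishing, exactly as in the zig‑zag~(\ref{sequence_RQ}). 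Since by~(\ref{p-to-q-sequence}) each $Q_i^{n-i}$ is the cokernel of a line‑bundle inclusion into an adjacent $P$, a diagram chase along~(\ref{sequence_RQ}) shows $\tilde B$ is surjective, and I set $E:=\ker\tilde B$, so that $0\to E\to\mathcal P\xrightarrow{\tilde B}\mathcal Q\to 0$ is exact with $E$ of generic rank $\operatorname{rk}\mathcal P-\operatorname{rk}\mathcal Q=n$.

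The step that uses the $Tor$‑hypothesis is the local freeness of $E$. The sheaf $\mathcal Q$ is the pull‑back along the flat fibration $\pi\colon X\to\bP^1$ of a torsion sheaf on the smooth curve $\bP^1$, hence admits a locally free resolution of length one on $\bP^1$, which pulls back to one on $X$; thus $\mathcal Q$ has projective dimension at most one and $Tor_2(\bC_x,\mathcal Q)=0$ for every $x\in X$. Tensoring $0\to E\to\mathcal P\to\mathcal Q\to 0$ with $\bC_x$, the portion $Tor_2(\bC_x,\mathcal Q)\to Tor_1(\bC_x,E)\to Tor_1(\bC_x,\mathcal P)\to Tor_1(\bC_x,\mathcal Q)$ of the long exact sequence, with its left term zero and its right map injective by hypothesis, forces $Tor_1(\bC_x,E)=0$ for all $x$; on the smooth surface $X$ this means $E$ is locally free, necessarily of rank $n$.

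Triviality on fibres and the two flags are then extracted as follows. Let $F$ be the fibre of $\pi$ over a point of $\bP^1$ outside the finite union of the supports of the $Q_i^{n-i}$, of $R$, and of the torsion subsheaves of the $P_i^{n-i-1}$---a union disjoint from $\eta=\infty$, by the parenthetical constraint in Theorem~\ref{P,Q-properties}. Over such a point the torsion sheaves have vanishing higher $Tor$ against $\mathcal{O}_F$, so $E|_F\cong\mathcal P|_F\cong\oplus_iP_i^{n-i-1}|_F\cong\mathcal{O}_F^{\oplus n}$, each $P_i^{n-i-1}$ being pulled back from $\bP^1$ and a line bundle near the point; hence $E$ is trivial on the generic fibre and on the fibre over $\eta=\infty$. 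For the flags, restrict the short exact sequence to $C_0$ and to $C_\infty$: these are sections of $\pi$, transverse to the fibres, so $Tor_1(\mathcal{O}_{C_0},\mathcal Q)=Tor_1(\mathcal{O}_{C_\infty},\mathcal Q)=0$ and $E|_{C_0}$, $E|_{C_\infty}$ embed as rank‑$n$ subsheaves of $\oplus_iP_i^{n-i-1}$. The line subbundles $\pO(m_i)\subset P_i^{n-i-1}$ (resp. $\pO\subset P_i^{n-i-1}$) of~(\ref{p-to-q-sequence}) then cut out, on intersecting $E|_{C_0}$ (resp. $E|_{C_\infty}$) with the partial sums they span, a full flag of subbundles; the degrees $m_1,\dots,m_n$ of the successive quotients on $C_0$ and the triviality of the $E^{\infty,j}$ on $C_\infty$ follow from the degrees and lengths of Proposition~\ref{degreeofQ} by Grothendieck--Riemann--Roch applied to the corresponding subcomplexes. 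Finally, additivity of the Chern character in $0\to E\to\mathcal P\to\mathcal Q\to 0$, together with Proposition~\ref{degreeofQ} and the description~(\ref{r-matrix}) of $R$, yields $c_1(E)=\sum_in_iD_{i,\xi}$ (so that $\sum_jm_j=\sum_jn_j$ and $n_{j+1}-n_j=r_j$) and identifies the second Chern class with the parameter $m_0$; this is Section~3.3 run backwards, and since~(\ref{sequence_RQ}) exhibits $E$ itself as $\ker\tilde B$, the construction is inverse to that of Sections~3.3--4.1.

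The genuinely delicate point is the recovery of the flags: one must show that the algebraically defined subsheaves above restrict on $C_0$ and $C_\infty$ to flags of honest \emph{subbundles}---not just subsheaves---of $E|_{C_0}$ and $E|_{C_\infty}$, with the asserted degrees and with the $E^{\infty,j}$ trivial. This requires a careful local study, near $C_0$, $C_\infty$ and the exceptional fibres $D_i$, of how the entries of $\tilde B$ (multiplication by $\xi$, $\psi$, and the cohomological maps $\widehat M_\xi$, $\widehat M_\psi$) degenerate---vanish or become isomorphisms---, and it is here that the $Tor$‑hypothesis and the precise sequences~(\ref{p-to-q-sequence}) are used in full strength. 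By contrast, the surjectivity of $\tilde B$, the local freeness, the fibrewise triviality and the Chern‑class bookkeeping are comparatively routine consequences of the homological set‑up.
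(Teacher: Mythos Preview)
Your argument for the converse direction---defining $E$ as $\ker\tilde B$, using $Tor_2(\bC_x,\mathcal Q)=0$ together with the $Tor$-hypothesis to get local freeness, reading off fibrewise triviality from the fact that the $P_i^{n-i-1}$ are pulled back from $\bP^1$, and defining the flags via the partial sums of the $P$'s---is exactly the paper's approach, and your identification of the flag-subbundle step as the delicate one is apt (the paper simply asserts that $E_{0,j}$ is the intersection of $E|_{C_0}$ with $\oplus_{i\leq j}P_i^{n-i-1}$ without elaboration).

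However, the theorem has a forward direction you do not address: items~3 and~4 are properties of the $P,Q$ \emph{produced from} an instanton bundle $E$, not hypotheses. Item~4 is immediate from the same long exact $Tor$-sequence you wrote (local freeness of $E$ forces $Tor_1(\bC_x,E)=0$, hence injectivity), but item~3---the exact sequences $0\to\pO(m_i)\to P_i^{n-i-1}\to Q_{i+1}^{n-i-1}\to 0$ and $0\to\pO\to P_i^{n-i-1}\to Q_i^{n-i}\to 0$---requires an argument. The paper obtains these by pushing down the tautological sequences
\[
0\to E_{i-1}^{n-i}\to E_i^{n-i}\to \pO(m_i)|_{C_0}\to 0,\qquad
0\to E_i^{n-i-1}\to E_i^{n-i}\to \pO|_{C_\infty}\to 0
\]
along $\pi$, using the vanishing of $R^0\pi_*$ on the left terms and the torsion nature of $Q$ (supported away from $\eta=\infty$) to identify the boundary maps. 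Without this step the sequences~(\ref{p-to-q-sequence}) are unproven, and you have used them freely in your converse argument.
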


\begin{proof} The first two statements are covered in the proposition (\ref{degreeofQ}) in the preceding section. For the third, 
we have over $X$ the exact sequence of sheaves 
$$ 0\rightarrow E_{i-1}^{n-i}\rightarrow E_i^{n-i}\rightarrow \pO(m_i)|C_0 \rightarrow 0,$$
$$ 0\rightarrow E_i^{n-i-1}\rightarrow E_i^{n-i}\rightarrow \pO |C_\infty \rightarrow 0.$$
Taking a pushdown to $\bP^1$ gives the third statement, recalling that the $Q_i^{n-i}$ are torsion, supported away from $\eta= \infty$.

For the fourth item, from the exact sequence (\ref{sequence_RQ}): $0\rightarrow E\rightarrow \mathcal P \rightarrow \mathcal Q\rightarrow 0$, and the fact that $E$ is locally free at $x$ if and only if $Tor_1(\bC_x,E)= 0$, it follows from the $Tor$ long exact sequence that $E$ is locally free iff the map $Tor_1(\bC_x,\mathcal P) \rightarrow Tor_1(\bC_x,\mathcal Q)$ induced by $\tilde B$ is injective. (We will see below (Remark (\ref{tor2}) ) that $Tor_2(\bC_x,\mathcal Q) = 0$.)

Conversely, given $P_{i}^{n-i-1}, Q_{i }^{n-i }$, and the maps between them, given the matrix (\ref{r-matrix}) we can build the sheaf $R$, and so $ \mathcal P, \mathcal Q$ and then $E$.   The $Tor$-condition then ensures that $E$ is locally free. The flags   $E_{0, j}$  along $C_0 $ are given by sections of $E$ along $C_0$ lying in $\oplus_{i\leq j} P_{i}^{n-i-1}$, and similarly
$E^{\infty,k}$ by sections along $C_\infty$ lying in $\oplus_{i\leq k} P^{i}_{n-i-1}$. As the sheaves $P_{i}^{n-i-1}$ are lifted from $\bP^1$, the bundle $E$ is trivial on lines $\eta = c$ away from the support of $\mathcal Q$.
\end{proof}

\begin{lemma} From the sequences  (\ref{p-to-q-sequence}) above, one has 
\begin{align*}
h^0(\bP^1,P_{i }^{n-i-1})=& h^0(\bP^1,P_{i }^{n-i-1}(-1))+1 =  h^0(\bP^1,Q_i^{n-i})+1,\\h^1(\bP^1,P_{i }^{n-i-1}) =&h^1(\bP^1,P_{i }^{n-i-1}(-1)) = h^1(\bP^1,Q_i^{n-i})= 0.
\end{align*}
\end{lemma}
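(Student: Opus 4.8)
The plan is to read off both cohomology groups of $P_i^{n-i-1}$ (and of its twist $P_i^{n-i-1}(-1)$) directly from the second short exact sequence in \eqref{p-to-q-sequence}, namely $0\to\pO\to P_i^{n-i-1}\to Q_i^{n-i}\to 0$, using only the elementary cohomology of line bundles on $\bP^1$ together with the structural facts about $Q_i^{n-i}$ already established in Proposition~\ref{degreeofQ} and Theorem~\ref{P,Q-properties}: it is a torsion sheaf of length $d_i$ supported away from $\eta=\infty$.

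First I would record the inputs. On $\bP^1$ one has $h^0(\pO)=1$, $h^1(\pO)=0$, and $h^0(\pO(-1))=h^1(\pO(-1))=0$. Any torsion sheaf $T$ on a curve has $h^1(T)=0$ and $h^0(T)=\operatorname{length}(T)$; if moreover $T$ is supported on a finite subset of $\bP^1\setminus\{\infty\}=\bA^1$, then $T(-1)\cong T$ because $\pO(-1)$ restricts trivially to $\bA^1$. Applying this to $Q_i^{n-i}$ gives $h^1(Q_i^{n-i})=h^1(Q_i^{n-i}(-1))=0$ and $h^0(Q_i^{n-i}(-1))=h^0(Q_i^{n-i})=d_i$. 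Then I would take the long exact cohomology sequence of $0\to\pO\to P_i^{n-i-1}\to Q_i^{n-i}\to 0$: since $H^1(\pO)=0$ and $H^1(Q_i^{n-i})=0$ it collapses to $0\to\bC\to H^0(P_i^{n-i-1})\to H^0(Q_i^{n-i})\to 0$ together with $H^1(P_i^{n-i-1})=0$, yielding $h^1(P_i^{n-i-1})=0=h^1(Q_i^{n-i})$ and $h^0(P_i^{n-i-1})=1+h^0(Q_i^{n-i})$. Tensoring the same sequence with $\pO(-1)$ (which preserves exactness) and repeating the argument, now with $H^0(\pO(-1))=H^1(\pO(-1))=0$, gives $h^0(P_i^{n-i-1}(-1))=h^0(Q_i^{n-i}(-1))=h^0(Q_i^{n-i})$ and $h^1(P_i^{n-i-1}(-1))=0$. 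Combining the two computations produces exactly the claimed chain $h^0(P_i^{n-i-1})=h^0(P_i^{n-i-1}(-1))+1=h^0(Q_i^{n-i})+1$ and $h^1(P_i^{n-i-1})=h^1(P_i^{n-i-1}(-1))=h^1(Q_i^{n-i})=0$.

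There is no serious obstacle; the only points that deserve a line of justification are the isomorphism $Q_i^{n-i}(-1)\cong Q_i^{n-i}$, which is precisely where the hypothesis that the $Q$-sheaves are supported away from $\eta=\infty$ is used, and the vanishing of the connecting homomorphisms, which follows since $H^1$ of each of $\pO$, $\pO(-1)$ and the torsion sheaf $Q_i^{n-i}$ vanishes. The first sequence in \eqref{p-to-q-sequence} is not needed for these particular identities, though it furnishes a consistency check whenever $m_i\ge -1$.
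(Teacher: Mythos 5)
Your argument is correct and is exactly the one the paper leaves implicit: the lemma is asserted to follow "from the sequences (\ref{p-to-q-sequence})", and the intended route is precisely the long exact cohomology sequence of $0\to\pO\to P_i^{n-i-1}\to Q_i^{n-i}\to 0$ and of its twist by $\pO(-1)$, using that $Q_i^{n-i}$ is torsion supported away from $\eta=\infty$ (so $Q_i^{n-i}(-1)\cong Q_i^{n-i}$ and its $h^1$ vanishes). Your identification of the two points needing justification, and your observation that the resulting map $H^0(P_i^{n-i-1}(-1))\to H^0(Q_i^{n-i})$ is an isomorphism, match what the paper uses later when normalizing the resolutions.
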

 By taking resolutions of the sheaves $P$, $Q$, and the maps between them, we see that the sheaves $P$, $Q$ are encoded by certain matrices. The next proposition summarises this.. 
\begin{proposition}\label{matrices} (Matrices encoding the sheaves $P$, $Q$, and the maps between them.) 
\begin{itemize}[leftmargin=*]
\item
The sheaves $P_{i }^{n-i-1}, Q_i^{n-i}$ have resolutions:
\begin{align}\label{resol-P-Q}
0\rightarrow \pO(-1)^{d_i} &
\xrightarrow{\begin{pmatrix} \eta\bI -\beta_i\\-\gamma_i\end{pmatrix}} 
\pO^{d_i+1}\rightarrow P_{i}^{n-i-1}\rightarrow 0,\\
0\rightarrow \pO(-1)^{d_i} &
\xrightarrow{\begin{pmatrix}\eta\bI -\beta_i \end{pmatrix} }
\pO^{d_i}\rightarrow Q_{i}^{n-i}\rightarrow 0.
\end{align}
\item This induces a resolution for the sheaf $R$:
\begin{equation}\label{resol-R}
\begin{matrix}
\pO(-F-C_0)^{d_0}\\\oplus \\\pO(-F-C_\infty)^{d_n}\\ \oplus \\ \pO (-C_\infty)^{d_0}\\\oplus\\ \pO (-C_0)^{d_n}
\end{matrix} 
\xrightarrow{\left(\begin{smallmatrix} \eta\bI -\beta_0 &0& \xi& M_\xi\\0&\eta\bI -\beta_n& - M_\psi &-\psi\end{smallmatrix}\right)}
\begin{matrix}\pO(-C_0)^{d_0} \\ \oplus \\ \pO(-C_\infty)^{d_n} \end{matrix}
\longrightarrow R\longrightarrow 0.\end{equation}

\item The maps from the  $ P$ to the  $Q$ in diagram  (\ref{sequence_RQ}) induce commuting diagrams on  the resolutions:

\begin{equation} \label{PtoQ}\xymatrix{0\ar[r]&\pO(-1)^{d_{i+1}} \ar[r]&\pO^{d_{i+1}}\ar[r] &Q_{i+1}^{n-i-1}\ar[r]& 0\\ 
0\ar[r] & \pO(-1)^{d_i} \ar[r]\ar[u]_{\begin{pmatrix}A_i\end{pmatrix}}\ar[d]^\bI&\pO^{d_i+1}\ar[r] \ar[u]_{\begin{pmatrix}A_i,&\alpha_i\end{pmatrix}}\ar[d]^{\begin{pmatrix}\bI,&0\end{pmatrix}}&P_{i}^{n-i-1}\ar[r]\ar[u]\ar[d]& 0\\
0\ar[r] & \pO(-1)^{d_i} \ar[r]&\pO^{d_i}\ar[r] &Q_{i}^{n-i }\ar[r]& 0\\}
 \end{equation}
 \item The maps from $R$ to $Q_0^n$, $Q_n^0$  in diagram  (\ref{sequence_RQ}) induce a diagram on resolutions
 \begin{equation}\label{RtoQ} 
 \xymatrix{ &\pO^{d_{0}}\ar[r] &Q_{0}^{n}\ar[r]& 0\\ \\
   { \begin{matrix}
\pO(-F_\infty-C_0)^{d_0} \oplus  \pO(-F_\infty-C_\infty)^{d_n} \\ \oplus\\ \pO (-C_\infty)^{d_0} \oplus \pO (-C_0)^{d_n}\end{matrix}
 } \ar[r]  &{\begin{matrix}\pO(-C_0)^{d_0} \\ \oplus \\ \pO(-C_\infty)^{d_n} \end{matrix}}\ar[r] \ar[uu]_{(\psi,M_\xi)}\ar[dd]^{(-M_\psi,-\xi)}&R \ar[r]\ar[uu]\ar[dd]& 0.\\ \\
  &\pO^{d_n}\ar[r] &Q_{n}^{0 }\ar[r]& 0}
\end{equation}
 
 \item The commutativity of the diagrams (\ref{PtoQ}) above gives the relation on matrices:
 \begin{align}\label{mat} 
 &\beta_{i+1} A_i - A_i \beta_i - \alpha_i\gamma_i = 0,& i&=0,..,n-1. 
 \end{align}  
 
  \item The  diagram  (\ref{RtoQ}) above gives the relation on matrices:
 \begin{equation}\label{mat2} M_\psi\beta_{0}   = \beta_n M_\xi. \end{equation}

 \item Suppose that one has the relations (\ref{mat}) on the matrices. The exactness of the  sequence (\ref{resol-P-Q}) and the fact that the first term is locally free is equivalent to asking that the sequence 
\begin{equation}\label{exact-p-q}0\rightarrow \pO(-1)^{d_{i }} \xrightarrow{\begin{pmatrix}\eta-\beta_i\\ -\gamma_i\\-A_i \end{pmatrix}}  \pO^{d_{i}+1}\oplus \pO^{d_{i+1}}(-1)\xrightarrow{\begin{pmatrix}A_i, \alpha_i, \eta-\beta_{i+1}   \end{pmatrix}}  \pO^{d_{i+1}}\rightarrow 0 
\end{equation}
 be exact (pointwise) for all $\eta$.
 
\end{itemize}
\end{proposition}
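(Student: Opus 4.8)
The plan is to recognise the three-term complex \eqref{exact-p-q} as the mapping cone of the upper chain map in the ladder \eqref{PtoQ}, and to extract its fibrewise exactness from the long exact cohomology sequence of a cone together with a Chern-number count. Granting \eqref{mat}, the composite of the two arrows of \eqref{exact-p-q} is
$$A_i(\eta\bI-\beta_i)-\alpha_i\gamma_i-(\eta\bI-\beta_{i+1})A_i=\beta_{i+1}A_i-A_i\beta_i-\alpha_i\gamma_i=0,$$
so it is a genuine complex; this is the very calculation that produces \eqref{mat} from the commutativity of \eqref{PtoQ}. Matching differentials block by block, \eqref{exact-p-q} is precisely the total complex of the morphism of two-term complexes
$$\Bigl[\pO(-1)^{d_i}\xrightarrow{\left(\begin{smallmatrix}\eta\bI-\beta_i\\-\gamma_i\end{smallmatrix}\right)}\pO^{d_i+1}\Bigr]\ \xrightarrow{\ \bigl(A_i,\;(A_i,\alpha_i)\bigr)\ }\ \Bigl[\pO(-1)^{d_{i+1}}\xrightarrow{\ \eta\bI-\beta_{i+1}\ }\pO^{d_{i+1}}\Bigr]$$
given by \eqref{PtoQ}, whose source is the resolution \eqref{resol-P-Q} of $P_i^{n-i-1}$ and whose target is that of $Q_{i+1}^{n-i-1}$. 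Each arrow $\eta\bI-\beta_\bullet$ is injective as a map of sheaves, its determinant being the nonzero monic characteristic polynomial of $\beta_\bullet$; hence both rows are honest resolutions, and the chain map descends to a sheaf morphism $\varphi\colon P_i^{n-i-1}\to Q_{i+1}^{n-i-1}$.

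Write $u$ and $v$ for the left and right arrows of \eqref{exact-p-q}. From the long exact sequence of the cone, the cohomology sheaves of \eqref{exact-p-q} are $0$ on the left, $\ker\varphi$ in the middle and $\operatorname{coker}\varphi$ on the right. Since the fibrewise Euler characteristic of \eqref{exact-p-q} is $d_i-(d_i+1+d_{i+1})+d_{i+1}=-1$, ``exact pointwise for all $\eta$'' can only mean that $u$ is injective on each fibre and $v$ surjective on each fibre; at $\eta=\infty$ the blocks $\eta\bI-\beta_\bullet$ rescale to the identity, so both conditions hold automatically there, which is why the sheaves $Q$ are supported away from $\eta=\infty$. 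By Nakayama, fibrewise surjectivity of $v$ everywhere is equivalent to surjectivity of $\varphi$, i.e.\ $\operatorname{coker}\varphi=0$; and fibrewise injectivity of $u$ everywhere is equivalent to $\operatorname{coker}u$ being locally free, which, once $\operatorname{coker}\varphi=0$ is fed into the induced exact sequence $0\to\ker\varphi\to\operatorname{coker}u\to\pO^{d_{i+1}}\to 0$, is in turn equivalent to $\ker\varphi$ being locally free. Hence \eqref{exact-p-q} is pointwise exact $\iff$ $\varphi$ is onto with locally free kernel --- which is exactly the condition that \eqref{resol-P-Q} (together with the ladder \eqref{PtoQ}) resolve $P_i^{n-i-1}$ and $Q_{i+1}^{n-i-1}$, realising the first line of \eqref{p-to-q-sequence} with locally free first term. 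A Chern-number count then pins this kernel down: $\ker\varphi$ has rank one (as $Q_{i+1}^{n-i-1}$ is torsion) and, once $\varphi$ is onto, degree $d_i-d_{i+1}$, so it is the line bundle $\pO(m_i)$ of \eqref{p-to-q-sequence}. The corresponding assertion for the map out of $R$ --- using \eqref{RtoQ}, the matrix \eqref{r-matrix} and the relation \eqref{mat2} --- follows from the identical cone computation.

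The one step that needs genuine care, and which I expect to be the main obstacle, is the implication ``pointwise exactness $\Rightarrow$ the stated structure'': one must notice that fibrewise injectivity of the stacked arrow $\left(\begin{smallmatrix}\eta\bI-\beta_i\\-\gamma_i\\-A_i\end{smallmatrix}\right)$ is strictly weaker than that of $\left(\begin{smallmatrix}\eta\bI-\beta_i\\-\gamma_i\end{smallmatrix}\right)$ alone --- the relation \eqref{mat} is precisely what allows the constant block $A_i$ to absorb a would-be fibre kernel of the latter --- so the conclusion must be phrased in terms of $\varphi$ and $\ker\varphi$ rather than of $P_i^{n-i-1}$ itself, and one must track torsion carefully through the long exact sequence in order to land on the specific line bundle $\pO(m_i)$ rather than merely on a rank-one sheaf. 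Everything else --- identifying the cone, the signs, the behaviour at $\eta=\infty$, the Nakayama step --- is routine.
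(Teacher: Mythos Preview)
Your argument addresses only the final bullet of the proposition, and for that bullet it is essentially the paper's proof rephrased in the language of mapping cones. The paper writes down the induced map $P_i^{n-i-1}\to Q_{i+1}^{n-i-1}$ directly, calls its cokernel $C$ and kernel $K$, and observes that $C=0$ exactly when \eqref{exact-p-q} surjects at the right and that $K$ is locally free exactly when \eqref{exact-p-q} injects fibrewise at the left. Your cone long exact sequence unpacks to precisely these two statements, so the content is identical; your extra Chern-number remark pinning $\ker\varphi$ down as $\pO(m_i)$ is correct but not something the paper bothers to spell out here.

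What you have not done is prove the earlier bullets, and the first one carries real content. The paper does not take the resolutions \eqref{resol-P-Q} as given: it \emph{constructs} them by the Beilinson trick on $\bP^1\times\bP^1$, tensoring the resolution $0\to\pO(-1,-1)\to\pO\to\pO|_\Delta\to 0$ of the diagonal with the pullback of $P_i^{n-i-1}$ (resp.\ $Q_i^{n-i}$) from one factor and pushing down to the other. This identifies the terms of the resolution as $H^0(P_i^{n-i-1}(-1))\otimes\pO(-1)\to H^0(P_i^{n-i-1})\otimes\pO$, with the map given by multiplication by $\eta$; the dimensions $d_i,\,d_i+1$ then come from the cohomology computations of Proposition~\ref{degreeofQ}, and the specific normal form $\bigl(\begin{smallmatrix}\eta\bI-\beta_i\\-\gamma_i\end{smallmatrix}\bigr)$ is obtained by using that $P_i^{n-i-1}(-1)\to Q_i^{n-i}(-1)$ is an isomorphism on $H^0$. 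Without this step the matrices $\beta_i,\gamma_i,A_i,\alpha_i$ are not yet tied to the sheaves $P,Q$ defined as direct images on $X$, and the lifts to resolutions in \eqref{PtoQ}, \eqref{RtoQ} (hence the relations \eqref{mat}, \eqref{mat2}) are not yet justified. Your sentence ``hence both rows are honest resolutions'' shows only that the displayed maps are injective sheaf maps, not that their cokernels are the sheaves $P_i^{n-i-1}$ and $Q_{i+1}^{n-i-1}$ you need.
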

\begin{proof}
We have on $\bP^1\times\bP^1$ the resolution of the diagonal:
$$ 0\rightarrow \pO(-1,-1)  \rightarrow \pO\rightarrow \pO|_\Delta\rightarrow 0,$$
and so, tensoring by the pull back of $P_{i}^{n-i-1}$, $Q_i^{n-i}$ from one factor, and pushing down to the other, one has resolutions 
\begin{align*}
0\rightarrow \pO(-1)^{d_i} &{\xrightarrow{\begin{pmatrix}G_{i,1}\eta +G_{i,0} \end{pmatrix} }} \pO^{d_i+1}\rightarrow P_{i}^{n-i-1}\rightarrow 0,\\
0\rightarrow \pO(-1)^{d_i} &{\xrightarrow{\begin{pmatrix}H_{i,1}\eta +H_{i,0} \end{pmatrix} }} \pO^{d_i}\rightarrow Q_{i}^{n-i}\rightarrow 0.
\end{align*}
The dimensions $d_i$ were computed in Proposition (\ref{degreeofQ}) above:
$$d_i = \frac{ \sum_j n_j( n_j+1 )}{2} + m_0-\sum_{\ell=1}^i m_\ell.$$
One can normalize to $G_{i,1} =H_{i,1} = \bI$; and since the map $P_i^{n-i-1}(-1)\rightarrow Q_i^{n-i}(-1)$ induces an isomorphism on cohomology, we can further normalize to the form given in the theorem.

For $Q_{0}^{n}, Q_{n}^{0}$ we have  
\begin{align*}
0\rightarrow \pO(-1)^{d_0} &
\xrightarrow{\begin{pmatrix}\eta\bI -\beta_0 \end{pmatrix} }
\pO^{d_0}\rightarrow Q_{0}^{n}\rightarrow 0,\\
0\rightarrow \pO(-1)^{d_n} &
\xrightarrow{\begin{pmatrix}\eta\bI -\beta_n\end{pmatrix} }
\pO^{d_n}\rightarrow Q^{0}_{n}\rightarrow 0,   
\end{align*}
which in turn, we insert into the expression (\ref{r-definition}) for $R$ to obtain the description  of $R$  in the proposition.

The maps on the sheaves naturally extend to their resolutions; hence the diagrams in the proposition, and the ensuing   commutation relations.

Finally, if one sets
\begin{equation} \xymatrix{ &&&&C\\                            
0\ar[r]&\pO(-1)^{d_{i+1}} \ar[rr]^{\begin{pmatrix} \eta-\beta_{i+1}    \end{pmatrix}}&&\pO^{d_{i+1}}\ar[r] &Q_{i+1}^{n-i-1}\ar[r]\ar[u]& 0\\ \\  
0\ar[r] & \pO(-1)^{d_i} \ar[rr]^{\begin{pmatrix} \eta-\beta_{i} \\-\gamma_i  \end{pmatrix}}\ar[uu]_{\begin{pmatrix}A_i\end{pmatrix}} &&\pO^{d_i+1}\ar[r] \ar[uu]_{\begin{pmatrix}A_i,\alpha_i\end{pmatrix}} &P_{i}^{n-i-1}\ar[r]\ar[uu]& 0,\\
&&&&K\ar[u] }
 \end{equation}
with $C, K$ the cokernel and kernel respectively, one sees that $C$ vanishes if the sequence (\ref{exact-p-q}) surjects onto the last term; likewise $K$ is locally free if the first term injects.
\end{proof}

\begin{remark}\label{tor2} The fact that the sheaves $Q_i^{n-i}$ have a resolution of length two tells us that $Tor_2(\bC_x, Q_i^{n-i})$ vanishes, as announced.\end{remark}

\subsection{A monad description of $E$}

The matrices $A_i, \beta_i, M_\xi, M_\psi$ occurring in Proposition (\ref{matrices}) can also be assembled into the  matrices defining a monad description of $E$: if we write schematically
$$\pP = (\oplus_{i=0}^{n-1} P_{i}^{n-i-1})\oplus R,\qquad \pQ= \oplus_{i=0}^{n } Q_{i}^{n-i };$$ then
Proposition (\ref{matrices}) gives compatible resolutions of $\pP, \pQ$, which we write schematically as 
\begin{equation}\begin{matrix}\pF&\buildrel{\mu}\over{\rightarrow}&\pA&\buildrel{\alpha}\over{\rightarrow} &\pB&\rightarrow &\pP&\rightarrow &0\\
&&\downarrow\beta&&\downarrow\delta&&\downarrow\epsilon\\
0&\rightarrow &\pC&\buildrel{\gamma}\over{\rightarrow} &\pD&\rightarrow &\pQ&\rightarrow &0,\end{matrix}\end{equation}
with $\pA, \pB, \pC, \pD$ sums of line bundles on $X$, and the maps between them given  by Proposition (\ref{matrices}).
The extra term $\pF$, a sum of two line bundles,  is obtained from extending the resolution of $R$; it maps to the four sheaves in the resolution of $R$ that form part of $\pA$ by a matrix
$$\mu = \begin{pmatrix} 1&0\\0&1\\\psi&M_\xi\\-M_\psi& -\xi\end{pmatrix}.$$
We also have an exact sequence
$$0\rightarrow E\rightarrow\pP\xrightarrow{\epsilon} \pQ\rightarrow 0.$$
Some diagram chasing \cite[Sec.~4.2]{Cherkis:2017pop} shows that $E$ is then the cohomology $\ker /\mathrm{Im} $ of the monad
\begin{equation}\pA\xrightarrow{\begin{pmatrix}\alpha&-\beta\end{pmatrix}} \pB\oplus \pC\xrightarrow{\begin{pmatrix}\delta\\\gamma\end{pmatrix}}\pD \rightarrow 0.\end{equation}
(The map $\alpha$ is not necessarily  injective, essentially because of the resolution of $R$, but this is of no consequence.) The matrices involved in the monad, or equivalently in the resolutions and the maps between them, encode the  Nahm complex associated to the bundle $E$. We will return to this later.

Given the matrices, one can also reconstruct our sheaves $P, Q, R$ and so  $E$; this is straightforward; the one non-immediate thing that one must check is that $E$ is locally free, or alternately, for any point $x$ with coordinates $\xi, \psi$, the vector space $Tor_1(\bC_x, E)$ is trivial. Referring  to the standard long exact sequence for $Tor$, and to the fact that we have just built resolutions of our sheaves, the $Tor$-condition amounts to asking that the map 
 $\beta: \ker(\alpha)/{\mathrm{Im}}(\mu) \rightarrow \ker(\gamma)$  be injective at every point.

\subsection{The structure of the sheaves $S$, $R$, and their decompositions.}

We have a sheaf $R$ playing a particular role in our construction, essentially defining a correspondence between  the two sheaves $Q^0_n$ and $Q_0^n$. We will see in the next subsection from the point of view of the Nahm transform, this correspondence will give  the quasiperiodicity of Nahm flows on the circle associated to the instanton. It is more natural, however, in the Nahm transform for $\tnk$, to have  the correspondence  decompose  into a series of correspondences, one associated with each `NUT', that is each singular point of the fibre over $\eta = z_i$ in the fibration $\tnk\rightarrow \bR^3$. This is the picture considered in \cite{Cherkis:2008ip}, for example. The sheaf $R$ in some sense handles all these singular points together. We now show how this correspondence can be factored from the algebro-geometric point of view that we have here.

  The sheaf $R$ is a torsion sheaf, with   the same support as $Q_0^n, Q^0_n$; as noted, it defines a correspondence between the two, which is an isomorphism away from the `NUTs', that is the points $\eta= z_i$. This correspondence can be decomposed into a product of correspondences, ``supported'' (in the sense of defining a natural isomorphism away from the support) at each $z_i$  in turn, as follows. 

Recall that  in the sequence \eqref{resolve} above over our fibre product, there is a portion of the map $B$:
\begin{equation} \label{bee} \begin{matrix} \tilde E^{ n}_{0}( -C _0)\\ \oplus \\\tilde E^{0}_{ n} ( -C_\infty) \end{matrix}\qquad 
\xrightarrow{\begin{pmatrix} \psi& \tilde \xi\\ -\tilde \psi&-\xi\end{pmatrix}}\qquad \begin{matrix}\tilde E^{ n}_{0} \\ \oplus \\\tilde E^{0}_{n} \end{matrix}.      
\end{equation}
This portion of the map  $B$ factors through the sheaf $S$  in the sequence (\ref{resolve}) above. There is an equivalence of divisors (away from $\eta=\infty$) $C_\infty = C_0 +\sum_iD_{i,\xi}$ given by the function $\xi$, and there is a similar one $ \tilde C_\infty = \tilde C_0 +\sum_i\tilde D_{i,\xi}$ given by $\tilde \xi$;  in the same way, there is an isomorphism $C_0 = C_\infty +\sum_iD_{i,\psi} $, given by $\psi$  (away from $\eta=\infty$) , and  $\tilde C_0 = \tilde C_\infty +\sum_i\tilde D_{i,\psi} $ given by $\tilde \psi$. We then have
  isomorphisms away from $\eta=\infty$:
\begin{align*} 
\tilde E^n_0(-C_0) & \xrightarrow{\tilde\xi^{-1}} \tilde E_n^0(-C_0+\sum_i\tilde D_{i,\xi}),\\
\tilde E_n^0(-C_\infty)& \xrightarrow{\xi} \tilde E_n^0(-C_0-\sum_iD_{i,\xi}),\\
\tilde E^n_0& \xrightarrow{\tilde\xi^{-1}\psi^{-1}} \tilde E_n^0(-C_0+C_\infty+\sum_i  (D_{i,\psi}+ \tilde D_{i,\xi})),\end{align*}
and with these, the diagram (\ref{bee}) becomes
\begin{equation}\label{beeprime}  \begin{matrix} \tilde E_n^0(-C_0+\sum_i\tilde D_{i,\xi})\\ \oplus \\\tilde E_n^0(-C_0-\sum_iD_{i,\xi}) \end{matrix} \qquad\buildrel{ \widetilde B}\over{ \longrightarrow} \qquad \begin{matrix}  \tilde E_n^0(-C_0+C_\infty+\sum_i  (D_{i,\psi}+ \tilde D_{i,\xi}))\\ \oplus \\\tilde E^{0}_{n} \end{matrix},    
\end{equation}
 with 
 $$\widetilde B = \begin{pmatrix} 1&\prod_i(\eta- z_i)^{-1}\\  -\prod_i(\eta- z_i)&-1\end{pmatrix}.$$
 The diagram (\ref{bee}) or (\ref{beeprime}) should be thought of as   defining a correspondence between its top right hand element and its bottom right hand element, by $(a, 0)\mapsto (0,  -\prod_i(\eta- z_i)a)$, that is pulling back and pushing forward ; since the map ${ \widetilde B}$ is degenerate, pulling back to the top left hand or the bottom left hand to define the correspondence makes no difference; in either case, the correspondence is given  by multiplication by $\prod_i(\eta- z_i)$ or its inverse, depending on the direction.
  
 This can then be split into a sequence of  such correspondence diagrams, two by two blocks, with the bottom row of one block being the top row of the next. For the $i$-th block the correspondence should be multiplication by $\eta- z_i$ or its inverse. Thus we can factor the correspondence  as follows.
 
 \begin{align}\label{corresp} 
\begin{smallmatrix} \tilde E_n^0(-C_0+\sum_i\tilde D_{i,\xi})\\ \oplus \\\tilde E_n^0(-C_0-D_{k,\xi}+\sum_{i\leq k-1}\tilde D_{i,\xi} ) 
\end{smallmatrix}\quad &\xrightarrow{B_k}\quad \begin{smallmatrix}  \tilde E_n^0(-C_0+C_\infty+\sum_i  (D_{i,\psi}+ \tilde D_{i,\xi}))\\ \oplus \\\tilde E_n^0(-C_0+C_\infty +\sum_{i}D_{i,\psi}+ \sum_{i\leq k-1}\tilde D_{i,\xi}), 
\end{smallmatrix}    \nonumber  
\\\nonumber
 &\\\nonumber
  \begin{smallmatrix} \tilde E_n^0(-C_0-D_{k,\xi}+\sum_{i\leq k-1}\tilde D_{i,\xi} )\\ \oplus \\\tilde E_n^0(-C_0-\sum_{i>k-2}D_{i,\xi}   +\sum_{i\leq k-2}\tilde D_{i,\xi} ) \end{smallmatrix}\quad &\xrightarrow{B_{k-1}}\quad \begin{smallmatrix}  \tilde E_n^0(-C_0+C_\infty +\sum_{i}D_{i,\psi}+ \sum_{i\leq k-1}\tilde D_{i,\xi}) \\ \oplus \\ \tilde E_n^0(-C_0+C_\infty +\sum_{i}D_{i,\psi}+ \sum_{i\leq k-2}\tilde D_{i,\xi})), \end{smallmatrix}     
\\\nonumber
\vdots\qquad &\vdots\qquad \vdots\\\nonumber
  \begin{smallmatrix} \tilde E_n^0(-C_0 -\sum_{i>2}D_{i,\xi}+ \sum_{i\leq 2}\tilde D_{i,\xi} )\\ \oplus \\\tilde E_n^0(-C_0 -\sum_{i>1}D_{i,\xi}+ \tilde D_{1,\xi} ) \end{smallmatrix}\quad &\xrightarrow{B_{2}}\quad \begin{smallmatrix}  \tilde E_n^0(-C_0+C_\infty+\sum_{i}D_{i,\psi} + \sum_{i\leq 2}\tilde D_{i,\xi}))\\ \oplus \\\tilde E_n^0(-C_0+C_\infty+\sum_{i}D_{i,\psi}+  \tilde D_{1,\xi})), \end{smallmatrix}      
\\\nonumber
&\\
 \begin{smallmatrix} \tilde E_n^0(-C_0-\sum_{i>1}D_{i,\xi}+ \tilde D_{1,\xi}  )\\ \oplus \\ \tilde E_n^0(-C_0 -\sum_{i>0}D_{i,\xi})\simeq  \tilde E_n^0(-C_\infty) \end{smallmatrix}\quad &\buildrel{B_{1}}\over{ \longrightarrow}\quad \begin{smallmatrix} \tilde E_n^0(-C_0+C_\infty+ \sum_{i}D_{i,\psi}+ \tilde D_{1,\xi})\\ \oplus \\\tilde E_n^0(-C_0+C_\infty+\sum_{i}D_{i,\psi})\simeq \tilde E_n^0. \end{smallmatrix}      
\end{align}
Here $B_j$ is the matrix
$$B_j = \begin{pmatrix} 1& (\eta-  z_j)^{-1}\\ -(\eta-  z_j)&-1\end{pmatrix}.$$

Let us define functions:
\begin{align} \xi_1 &= \xi,& 
\psi_1 &= \xi_1^{-1}(\eta-z_1),& 
\xi_2 &= \psi_1^{-1},& 
&\ldots\\
\psi_j &= \xi_j^{-1}(\eta-z_j),&
 \xi_{j+1} &= \psi_j^{-1},&
 &\ldots&
  \xi_k\psi_k&= (\eta-z_j).\nonumber
  \end{align}
\begin{lemma} We have that $\psi_k = \psi$,  the divisor of $\xi_j$ is $C_0-C_\infty + \sum_{i\geq j} D_{i,\xi} - \sum_{i< j} D_{i,\psi}$, and that of $\psi_j$ is $-C_0+C_\infty  - \sum_{i> j} D_{i,\xi} + \sum_{i\leq j} D_{i,\psi}$.\end{lemma}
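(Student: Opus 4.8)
The plan is to obtain closed-form expressions for the functions $\xi_j,\psi_j$ in terms of $\xi$ and the linear forms $\eta-z_i$, and then to read off their divisors from $\operatorname{div}(\xi)$ and $\operatorname{div}(\eta-z_i)$, both of which were recorded when the compactification $X$ was constructed.

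First I would prove by induction on $j$ that
$$\xi_j \;=\; \xi\left(\prod_{i=1}^{j-1}(\eta-z_i)\right)^{-1}, \qquad \psi_j \;=\; \xi^{-1}\prod_{i=1}^{j}(\eta-z_i).$$
For $j=1$ the empty product gives $\xi_1=\xi$ and then $\psi_1=\xi_1^{-1}(\eta-z_1)$, which are the definitions. For the inductive step, $\xi_{j+1}=\psi_j^{-1}=\xi\left(\prod_{i=1}^{j}(\eta-z_i)\right)^{-1}$, and hence $\psi_{j+1}=\xi_{j+1}^{-1}(\eta-z_{j+1})=\xi^{-1}\prod_{i=1}^{j+1}(\eta-z_i)$. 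Taking $j=k$ and using the defining relation $\xi\psi=\prod_{i=1}^{k}(\eta-z_i)$ of the conic, one gets $\psi_k=\xi^{-1}\prod_{i=1}^{k}(\eta-z_i)=\xi^{-1}(\xi\psi)=\psi$, which is the first assertion.

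Next, recall from the construction of $X$ that, as divisors away from the fibre over $\eta=\infty$, one has $\operatorname{div}(\xi)=C_0-C_\infty+\sum_{i=1}^{k}D_{i,\xi}$ --- this is precisely the linear equivalence $C_\infty\equiv C_0+\sum_i D_{i,\xi}$ realised by $\xi$ --- while $\eta-z_i$ cuts out the fibre $\pi^{-1}(z_i)=D_{i,\xi}+D_{i,\psi}$, so $\operatorname{div}(\eta-z_i)=D_{i,\xi}+D_{i,\psi}$ there. Substituting into the closed forms above,
$$\operatorname{div}(\xi_j)=\operatorname{div}(\xi)-\sum_{i<j}\operatorname{div}(\eta-z_i)=C_0-C_\infty+\sum_{i\geq j}D_{i,\xi}-\sum_{i<j}D_{i,\psi},$$
$$\operatorname{div}(\psi_j)=-\operatorname{div}(\xi)+\sum_{i\leq j}\operatorname{div}(\eta-z_i)=-C_0+C_\infty-\sum_{i>j}D_{i,\xi}+\sum_{i\leq j}D_{i,\psi},$$
which are exactly the claimed formulae.

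The argument is essentially mechanical and I do not anticipate a genuine obstacle; the only points needing care are bookkeeping the ``away from $\eta=\infty$'' qualifier (which is harmless, since the $\xi_j,\psi_j$ enter only the factorisation (\ref{corresp}), which lives over $X$ minus the fibre at infinity) and checking that $D_{i,\xi}$ and $D_{i,\psi}$ each occur with multiplicity $1$ in the fibre $\pi^{-1}(z_i)$, which holds because $X_0$ is obtained from the ruled surface by a single blow-up over each $z_i$.
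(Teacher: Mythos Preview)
The paper states this lemma without proof, so there is nothing to compare against; your argument is correct and is the natural one. The inductive closed forms $\xi_j=\xi\prod_{i<j}(\eta-z_i)^{-1}$ and $\psi_j=\xi^{-1}\prod_{i\le j}(\eta-z_i)$ are exactly what the recursive definitions unwind to, and the divisor computation then reduces to the two facts already recorded in the paper: $\operatorname{div}(\xi)=C_0-C_\infty+\sum_i D_{i,\xi}$ (the equivalence realised by $\xi$, stated just above (\ref{beeprime})) and $\pi^{-1}(z_i)=D_{i,\xi}+D_{i,\psi}$ with each component of multiplicity one.
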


We consider the $j$-th map in (\ref{corresp}) above. We change trivialisations, by
  isomorphisms away from $\eta=\infty$:
\begin{align*} 
\scriptstyle \tilde{E}^n_0(-C_0-\sum_{i>j }( D_{i,\xi}+  \tilde D_{i,\xi}) +\sum_{i<j} (\tilde D_{i,\psi} + \tilde D_{i,\xi})    ) & \xrightarrow{\tilde \xi_j^{-1}}\ 
\scriptstyle \tilde E_n^0(-C_0 -\sum_{i>j } D_{i,\xi}+ \sum_{i\leq j }\tilde D_{i,\xi} ),\\
\scriptstyle \tilde E_n^0(- C_\infty+ \sum_{i<j}\tilde D_{i,\xi})& \xrightarrow{\xi_j} \ 
\scriptstyle \tilde E_n^0(-C_0 -\sum_{i\geq j } D_{i,\xi}+ \sum_{i<j }\tilde D_{i,\xi} ),\\
\scriptstyle \tilde E_n^0(-C_0+C_\infty+\sum_{i}D_{i,\psi} + \sum_{i\leq j}\tilde D_{i,\xi})& \xrightarrow{\psi_j\tilde \xi_j } \ 
\scriptstyle \tilde E^n_0( \sum_{i>j}(D_{i,\psi}-\tilde D_{i,\xi})  + \sum_{i<j}( \tilde D_{i,\psi} +\tilde D_{i,\xi})),
 \end{align*} 
 
this modifies  $B_j$ to $\hat B_j$:
\begin{equation}\label{ch}\hat B_j= \begin{pmatrix} \psi_j\tilde \xi_j & 0\\ 0& 1\end{pmatrix}\begin{pmatrix} 1& (\eta-  z_j)^{-1}\\ -(\eta-  z_j)&-1\end{pmatrix}\begin{pmatrix}  \tilde \xi_j ^{-1}& 0\\0&\xi_j \end{pmatrix} = \begin{pmatrix} \psi_j&\tilde \xi_j\\  -\tilde \psi_j&-\xi_j\end{pmatrix}. \end{equation} 

Now take a direct image, as in (\ref{mainresolution}).  Recalling $\tilde E_n^0(-C_0+C_\infty+\sum_{i}D_{i,\psi})\simeq \tilde E_n^0$, we set 
\begin{equation} Q_{n,j}^0 = R^1(\pi_2)_*(\tilde E_n^0(\sum_{i\leq j}\tilde D_{i,\xi})).   \end{equation}
Let
\begin{align} \label{Mxij} M_{\xi, j} :& Q_{n,j-1}^0 \rightarrow  Q_{n,j}^0 \\ M_{\psi, j} :& Q_{n,j}^0 \rightarrow  Q_{n,j-1}^0  \nonumber  \end{align}
be the maps induced on the direct image by multiplication by $\xi_j, \psi_j$ respectively.

Then the correspondences (\ref{corresp}) above project to 
\begin{equation} \label{Qcorresp}\begin{matrix}  
  Q_{n,j}^0(-C_0 -\sum_{i>j}D_{i,\xi}) \\  \oplus\\
 Q_{n,j-1}^0  (-C_0 -\sum_{i>j-1}D_{i,\xi})
\end{matrix}
  \xrightarrow{R^1(B_j)} \begin{matrix}   Q_{n,j}^0 \\
\oplus\\ Q_{n,j-1}^0  \end{matrix}.
 \end{equation} 
 
 With suitable changes of trivialisation, as above (\ref{ch}), the matrix ${R^1(B_j)}$ becomes:
 $$\begin{pmatrix} \psi_j& M_{\xi,j}\\-M_{\psi,j} &-\xi_j\end{pmatrix}.$$

 We can in a similar fashion, factor the sequence that defines $S$:
 $$  \begin{matrix} \tilde E^{ n}_{0}( -C _\infty)\\ \oplus \\\tilde E^{0}_{ n} ( -C_0) \end{matrix} \buildrel{ \widetilde A}\over{ \rightarrow} \begin{matrix}\tilde E^{ n}_{0}(-C_0 )\\ \oplus \\ \tilde E^{0}_{n}(-C_\infty )\end{matrix}     \rightarrow S \rightarrow 0.
 $$
 Combining this with the isomorphisms away from $\eta = \infty$
 \begin{align*} 
 \tilde E_n^0(-C_\infty)& \buildrel{\xi}\over{\rightarrow} \tilde E_n^0(-C_0-\sum_iD_{i,\xi}),\\
\tilde E^n_0(-C_0) & \buildrel{\tilde\xi^{-1}}\over{\rightarrow} \tilde E_n^0(-C_0+\sum_i\tilde D_{i,\xi}),\\
\tilde E^n_0(-C_\infty)& \buildrel{\tilde\psi \xi}\over{\rightarrow} \tilde E_n^0(-C_0-\sum_i  (D_{i,\xi}+ \tilde D_{i,\psi})),
\end{align*}
we have
$$  \begin{matrix}  \tilde E_n^0(-C_0-\sum_i  (D_{i,\xi}+ \tilde D_{i,\psi}))\\ \oplus \\\tilde E^{0}_{ n} ( -C_0) \end{matrix} \buildrel{ \hat A}\over{ \rightarrow} \begin{matrix}\tilde E_n^0(-C_0+\sum_i\tilde D_{i,\xi})\\ \oplus \\\tilde E_n^0(-C_0-\sum_iD_{i,\xi})\end{matrix}     \rightarrow S \rightarrow 0,
 $$
 with 
 $$\widehat A = \begin{pmatrix} \prod_i(\eta- z_i)^{-1}& 1\\ -1& -\prod_i(\eta- z_i) \end{pmatrix}.$$
 This factors into a sequence of sheaves:
 \begin{align*}   
 \begin{smallmatrix}  \tilde E_n^0(-C_0-\sum_i  (D_{i,\xi}+ \tilde D_{i,\psi}))\\ \oplus \\\tilde E_n^0(-C_0-\sum_{i\leq k-1} (D_{i,\xi}+ \tilde D_{i,\psi})) \end{smallmatrix} &\xrightarrow{ \widehat A_k}  
 \begin{smallmatrix} \tilde E_n^0(-C_0+\sum_i\tilde D_{i,\xi})\\ \oplus \\\tilde E_n^0(-C_0-D_{k,\xi}+\sum_{i\leq k-1}\tilde D_{i,\xi} ) \end{smallmatrix}     \rightarrow {\scriptstyle S_k} \rightarrow \scriptstyle 0,\\
 &\\
 \begin{smallmatrix} \tilde E_n^0(-C_0-\sum_{i\leq k-1} (D_{i,\xi}+ \tilde D_{i,\psi}))\\ \oplus \\\tilde E_n^0(-C_0-\sum_{i\leq k-2} (D_{i,\xi}+ \tilde D_{i,\psi})) \end{smallmatrix} &\xrightarrow{ \widehat A_{k-1}}  \begin{smallmatrix} \tilde E_n^0(-C_0-D_{k,\xi}+\sum_{i\leq k-1}\tilde D_{i,\xi} )\\ \oplus \\\tilde E_n^0(-C_0-\sum_{i>k-2}D_{i,\xi}   +\sum_{i\leq k-2}\tilde D_{i,\xi})\end{smallmatrix}\rightarrow {\scriptstyle S_{k-1} }\rightarrow\ \scriptstyle 0, \\
 &....\\
 &....\\
  \begin{smallmatrix} \tilde E_n^0(-C_0-\sum_{i\leq 2} (D_{i,\xi}+ \tilde D_{i,\psi}))\\ \oplus \\\tilde E_n^0(-C_0-  (D_{1,\xi}+ \tilde D_{1,\psi})) \end{smallmatrix} &\xrightarrow{ \widehat A_{2}}  \begin{smallmatrix} \tilde E_n^0(-C_0 -\sum_{i>2}D_{i,\xi}+ \sum_{i\leq 2}\tilde D_{i,\xi} )\\ \oplus \\\tilde E_n^0(-C_0 -\sum_{i>1}D_{i,\xi}+ \tilde D_{1,\xi} ) \end{smallmatrix}   \rightarrow {\scriptstyle S_{2}} \rightarrow\ \scriptstyle 0, \\  
  &\\
    \begin{smallmatrix}\tilde E_n^0(-C_0-  (D_{1,\xi}+ \tilde D_{1,\psi})) \\ \oplus \\\tilde E_n^0(-C_0) \end{smallmatrix} &\xrightarrow{ \widehat A_{1}}  \begin{smallmatrix}\tilde E_n^0(-C_0-\sum_{i>1}D_{i,\xi}+ \tilde D_{1,\xi}  )\\ \oplus \\\tilde E_n^0(-C_0 -\sum_{i>0}D_{i,\xi}) \end{smallmatrix}   \rightarrow {\scriptstyle S_{1} }\rightarrow \scriptstyle\ 0. \\
 \end{align*} 
 
Here    
$$\widehat A_{j} = \begin{pmatrix}  (\eta- z_j)^{-1}&-1\\  1&  -(\eta- z_j) \end{pmatrix}.$$
Again one can change trivialisations: 
$$\begin{pmatrix}\tilde  \xi_j  & 0\\ 0&\xi_j^{-1}\end{pmatrix} \begin{pmatrix}  (\eta- z_j)^{-1}&-1\\ -1&  (\eta- z_j) \end{pmatrix}\begin{pmatrix}\tilde \psi_j \xi_j  & 0\\ 0& 1\end{pmatrix} =  \begin{pmatrix} \xi_j&\tilde\xi_j\\ -\tilde \psi_j&  -\psi_j\end{pmatrix}.$$
Taking the first direct image 
\begin{equation}\label{defrj}R_j {\buildrel{def}\over{=}}R^1(\pi_2)_*(S_j), \end{equation}  
map (\ref{Qcorresp}) factors through $R_j$:
$$ R_j \rightarrow \begin{matrix}  Q_{n,j}^0\\ \oplus \\ Q_{n,j-1}^0 \end{matrix},$$
and the correspondence 
  \begin{equation} \label{sequence R1}\xymatrix{&R\ar[dr]\ar[dl]&\\ Q_{0}^{n}&&Q^{0}_{n}.}\end{equation}
  between $Q^n_0$ and $Q_n^0$ factors as :
 \begin{equation} \label{sequence R2}
 \xymatrix@C=1.5em{& R_k\ar[dr]\ar[dl]&& R_{k-1}\ar[dr]\ar[dl]&&...\ar[dr]\ar[dl]&& R_1\ar[dr]\ar[dl]&\\
 Q_{0}^{n}&&Q^{0}_{n, k-1}&& Q^{0}_{n, k-2}&...& Q^{0}_{n, 1}&&  Q^{0}_{n}.   }\end{equation}
   
 Grothendieck-Riemann-Roch calculations, and resolving in the same way as for $Q_i^{n-i}$, etc., gives:
  \begin{proposition} \label{matricesQnj} The sheaves $ Q_{n,j}^0$ are lifts of torsion sheaves on $\bP^1$, of length
  $$ d_{n,j}= m_0 +\frac{1}{2} \sum_{i\leq j} (n_i^2 +n_i) + \sum_{i> j} (n_i^2 -n_i).$$
 Note that $d_{n,0} = d_n, d_{n,k} = d_0$. 
 The sheaves  $R_j ,Q_{n,j}^0$ have resolutions fitting into diagrams away from $\eta = \infty$:
 
 \begin{equation} 
 \xymatrix{ \pO(-1)^{d_{n,j}} \ar[r]^{\begin{matrix}\eta-\beta_{n,j}\end{matrix}}&\pO^{d_{n,j}}\ar[r] &Q_{n,j}^{0}\ar[r]& 0\\ \\
   { \begin{smallmatrix}
\pO( -C_0)^{d_{n,j}} \oplus  \pO( -C_\infty)^{d_{n,j-1}} \\ \oplus\\ \pO (-C_\infty -\sum_{i<j}D_{i,\psi})^{d_{n,j}}\\ \oplus \\ \pO (-C_0-\sum_{i>j}D_{i,\xi})^{d_{n,j-1}}\end{smallmatrix}
 } \ar[r]^G\ar[uu]_{(\begin{smallmatrix}\psi_j&M_{\xi_j}&\bI&0\end{smallmatrix})}\ar[dd]^{(\begin{smallmatrix}-M_{\psi_j}&-\xi_j&0&\bI\end{smallmatrix})}&{\begin{smallmatrix}\pO(-C_0-\sum_{i>j}D_{i,\xi})^{d_{n,j}}\\ \oplus \\ \pO(-C_\infty-\sum_{i<j}D_{i,\psi})^{d_{n,j-1}} \end{smallmatrix}}\ar[r] \ar[uu]_{(\begin{smallmatrix}\psi_j&M_{\xi_j}\end{smallmatrix})}\ar[dd]^{(\begin{smallmatrix}-M_{\psi_j}&-\xi_j\end{smallmatrix})}&R_j \ar[r]\ar[uu]\ar[dd]& 0\\ \\
 \pO(-1)^{d_{n,j-1}} \ar[r]_{\begin{matrix}\eta-\beta_{n,j-1}\end{matrix}}&\pO^{d_{n,j-1}}\ar[r] &Q_{n,j-1}^{0 }\ar[r]& 0\\ }
 \end{equation}
 where 
 $$G = \begin{pmatrix} \eta-\beta_{n,j}& 0 &\xi_j &M_{\xi_j}\\ 0& \eta-\beta_{n,j-1}&- M_{\psi_j}&-\psi_j\end{pmatrix}.$$
 The commutation of these diagrams yields the relations
 \begin{equation} \beta_{n,j} = M_{\xi_j}M_{\psi_j} + z_j,\quad \beta_{n,j-1} = M_{\psi_j}M_{\xi_j} + z_j.\end{equation}
 \end{proposition}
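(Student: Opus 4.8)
The plan is to imitate, step for step, the Grothendieck--Riemann--Roch computations of Section~3.3 and the resolution-of-the-diagonal argument of Proposition~\ref{matrices}, now applied to the sheaves $\tilde E_n^0\bigl(\sum_{i\le j}\tilde D_{i,\xi}\bigr)$ and to the intermediate sheaves $S_j$ of \eqref{defrj}.

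First, \emph{that $Q_{n,j}^0$ is a lift, and the length computation.} Since $\tilde E_n^0(\sum_{i\le j}\tilde D_{i,\xi})$ is pulled back along $\pi_1$ from the first factor of the fibre product \eqref{fibre}, flat base change identifies $R^1(\pi_2)_*$ of it with the pull-back to $X$, along $\pi\colon X\to\bP^1$, of $R^1\pi_*\bigl(E_n^0(\sum_{i\le j}D_{i,\xi})\bigr)$ on $\bP^1$; this is the asserted ``lift''. Because $E$ is trivial on the generic fibre $F$ while $C_\infty\cdot F=1$ and $D_{i,\xi}\cdot F=0$, the restriction of $E_n^0(\sum_{i\le j}D_{i,\xi})=E\bigl(-C_\infty+\sum_{i\le j}D_{i,\xi}\bigr)$ to a generic fibre is $\pO(-1)^{\oplus n}$, so the zeroeth direct image vanishes and $Q_{n,j}^0$ is torsion. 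Its length is then computed by GRR exactly as the lengths of the $Q_i^{n-i}$ in Section~3.3: substituting $ch(E)$ and $Td(X)$ together with $ch(\pO(D_{i,\xi}))=1+D_{i,\xi}-\tfrac12\,pt$ (from $D_{i,\xi}^2=-1$), $ch(\pO(-C_\infty))=1-C_\infty$, and the intersection numbers $D_{i,\xi}\cdot D_{\ell,\xi}=-\delta_{i\ell}$, $D_{i,\xi}\cdot C_0=1$, $D_{i,\xi}\cdot C_\infty=0$, $D_{i,\xi}\cdot F=0$, $D_{i,\xi}\cdot\sum_\ell D_{\ell,\psi}=1$, $C_\infty^2=0$, $C_\infty\cdot F=1$, $C_\infty\cdot\sum_\ell D_{\ell,\psi}=k$, one reads off $d_{n,j}$ from $ch(\pi_!(E_n^0(\sum_{i\le j}D_{i,\xi})))$. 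The endpoints $j=0$ and $j=k$ (where, using $C_0\sim C_\infty-\sum_iD_{i,\xi}$, one gets $Q_{n,0}^0=Q_n^0$ and $Q_{n,k}^0=Q_0^n$) recover $d_{n,0}=d_n$ and $d_{n,k}=d_0$, which serves as a check.

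Second, \emph{the resolutions and the diagrams.} Tensoring $0\to\pO(-1,-1)\to\pO\to\pO_\Delta\to0$ on $\bP^1\times\bP^1$ by the pull-back of the torsion sheaf $Q_{n,j}^0$ from one factor and pushing to the other produces $0\to\pO(-1)^{d_{n,j}}\xrightarrow{\eta\bI-\beta_{n,j}}\pO^{d_{n,j}}\to Q_{n,j}^0\to0$, with $\beta_{n,j}$, after normalising the coefficient of $\eta$ to $\bI$, the endomorphism ``multiplication by $\eta$'' of the $d_{n,j}$-dimensional space $H^0(\bP^1,Q_{n,j}^0)$. For $R_j$ I begin from the presentation of $S_j$ in \eqref{defrj}, apply the trivialisation change \eqref{ch} so that the relevant block of $\widehat A_j$ becomes $\left(\begin{smallmatrix}\xi_j&\tilde\xi_j\\-\tilde\psi_j&-\psi_j\end{smallmatrix}\right)$, take $R^1(\pi_2)_*$, and invoke the divisors of $\xi_j,\psi_j$ from the preceding lemma; this yields the two-term presentation of $R_j$ by appropriately twisted $Q_{n,j}^0,Q_{n,j-1}^0$ with matrix $\left(\begin{smallmatrix}\psi_j&M_{\xi_j}\\-M_{\psi_j}&-\xi_j\end{smallmatrix}\right)$, in complete analogy with \eqref{r-definition}--\eqref{r-matrix}. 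Splicing the line-bundle resolutions of $Q_{n,j}^0$ and $Q_{n,j-1}^0$ into this and forming the mapping cone gives the displayed resolution of $R_j$, exactly as \eqref{resol-R} was obtained for $R$; and the vertical maps $(\psi_j,M_{\xi_j},\bI,0)$, $(-M_{\psi_j},-\xi_j,0,\bI)$ onto those resolutions are the ones induced, by functoriality of $R^1\pi_*$, from multiplication by $\xi_j$, $\psi_j$ and the evident projections.

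Third, \emph{the relations.} Every square commutes because every arrow is induced, through $R^1\pi_*$ and the diagonal-resolution construction, from an honest map of sheaves, so only the interpretation of commutativity remains. Commutativity of the outer square says that ``multiplication by $\eta$ on $Q_{n,j}^0$'' is the composite ``multiply by $\psi_j$ then by $\xi_j$'' shifted by the $z_j$ that separates the divisor of $\xi_j$ from that of $\psi_j$, i.e.\ $\beta_{n,j}=M_{\xi_j}M_{\psi_j}+z_j\bI$, and symmetrically $\beta_{n,j-1}=M_{\psi_j}M_{\xi_j}+z_j\bI$; these are just the defining identity $\xi_j\psi_j=\eta-z_j$ of the functions $\xi_j,\psi_j$, read as operators on $H^0$ of the two torsion sheaves. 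The one delicate point — the main obstacle — is the divisor bookkeeping: one must carry the whole chain of trivialisation changes \eqref{corresp}, \eqref{ch} through $(\pi_2)_*$ and check that the twists surviving in the resolution of $R_j$ are exactly $-C_\infty-\sum_{i<j}D_{i,\psi}$ and $-C_0-\sum_{i>j}D_{i,\xi}$ in the indicated slots, and that the normalisations (coefficient of $\eta$ equal to $\bI$, identity blocks in the vertical maps) are simultaneously compatible over all $j$. This amounts to repeating, attentive to divisors, the argument already carried out for $R$ around \eqref{resol-R}--\eqref{RtoQ} in Proposition~\ref{matrices}.
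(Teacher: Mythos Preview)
Your proposal is correct and follows exactly the approach the paper indicates: the paper's own ``proof'' consists of the single sentence ``Grothendieck--Riemann--Roch calculations, and resolving in the same way as for $Q_i^{n-i}$, etc., gives:'' preceding the statement, and your sketch carries out precisely those two steps (the GRR computation of Section~3.3 and the diagonal-resolution/mapping-cone argument of Proposition~\ref{matrices}) for the sheaves $E_n^0(\sum_{i\le j}D_{i,\xi})$ and $S_j$. Your identification of the divisor bookkeeping as the only genuinely delicate point is apt, and your reading of the relations as $\xi_j\psi_j=\eta-z_j$ on cohomology is the intended one.
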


 \subsection{Bows}\label{Sec:Bows}
 For different boundary conditions,  moduli of solutions to Nahm's equations correspond to many different moduli spaces of solutions to the anti-self-duality equation and its various reductions; see e.g. \cite{Jardim} for a review.   {   The set of solutions we will want to study here arise via the Nahm transform from  $U(n)$ instantons on $\tnk$, and are called bow solutions. The boundary conditions that one imposes on the solutions are defined by   linear maps,  called, for representation theoretic reasons,  the fundamental and bifundamental data. }
 
 {\subsubsection{The Bow} The bow solutions which concern us are defined over a circle of perimeter $\ell$, with a linear parameter $s$  which we will take to be multi-valued, identifying the point with coordinate $s$ with the one with coordinate $s+\ell$. In this parametrization, we have the following data
\begin{itemize} 
\item There are marked points at $s=\lambda_i, i = 1,..,n$ on the circle (``$\lambda$-points''), where $0<\lambda_1<\lambda_2<\ldots<\lambda_n<\ell$; the $\lambda_i$  are determined by the asymptotic eigenvalues $e^{2\pi\ii\lambda_j/\ell}$ of the asymptotic holonomy of the instanton around the Taub-NUT isometry circle.
\item In addition, the interval $[\lambda_n-\ell,\lambda_1]$ is divided further into intervals $[\lambda_n-\ell,p_1], [p_1,p_2], ...[p_k,\lambda_1]$ by the addition of $k$ further points $\lambda_n-\ell<p_1<p_2<\ldots<p_k<\lambda_1$ (``$p$-points'') .  The location of these $p$-points  can be chosen (once and for all) as one wishes in the interval $(\lambda_n-\ell, \lambda_1)$.
 \item We then fix on each of the subintervals of the circle a unitary vector bundle. The rank of these varies.  On the interval $[\lambda_i, \lambda_{i+1}],i = 1,...n-1$, this bundle $V_i$ has rank $d_i= h^0(\bP^1, Q_i^{n-i})$; on $[\lambda_n-\ell,p_1]$, this bundle $V_n = V_{n,0}$ has rank $d_n=d_{n,0}=h^0(\bP^1, Q_n^{0})  $; on $[p_k,\lambda_1]$, the bundle $V_0$ has rank $d_0 = d_{n,k}=h^0(\bP^1, Q^n_{0}) $, and on $[p_j, p_{j+1} ], j = 1,..,k-1$, the bundle $V_{n,j}$ has rank $d_{n,j}= h^0(\bP^1,Q_{n,j}^0)$.
\item The bundles on successive intervals must be linked in some way at the boundary points. This is done by maps: at the $\lambda$-points $\lambda_i$, there are Hermitian injections $inj_i:E_{i-1}|_{\lambda_i}\rightarrow E_i|_{\lambda_i}$ or $inj_i:E_{i}|_{\lambda_i} \rightarrow E_{i-1}|_{\lambda_i}$ which identify  the  fibre on the lower rank side with a subspace of the fibre on the  higher rank side; under this identification, the adjoint map is a projection of the higher-dimensional fibre to its smaller-dimensional  subspace.  These maps are fixed. At the $p$-points, the maps are part of the data of the instanton; see below. \end{itemize}}

\subsubsection{Bow solutions}
 
 The instanton determines a {\em bow solution} of \cite{Cherkis:2008ip,Cherkis:2010bn} reviewed presently.  Its gauge equivalence class  corresponds  via the Nahm transform to our instantons over the Taub-NUT \cite{Second}.
We recall (adapting from \cite{Cherkis:2010bn}) the nature of the bow solution:
  
\begin{description}
\item[ A Nahm datum] is associated with each bow subinterval; it is a quadruplet $(\nabla_s, T_1,T_2,T_3)$ consisting of a unitary connection $\nabla_s$ and three skew Hermitian endomorphisms $T_1,T_2,$ and $T_3$ of the Hermitian bundle over that subinterval. Note that  the rank is varying with each change of interval.

\item[A bifundamental datum] is associated with each $p$-point, and  consists of   linear maps $M_j^L:V_{n,j-1}|_{p_j}\rightarrow V_{n,j}|_{p_j},\quad  M_j^R:V_{n,j }|_{p_j}\rightarrow V_{n,j-1}|_{p_j}$.
 
\item[A fundamental datum]   associated with each $\lambda$-point $s=\lambda_i$ is present only if the bundles have same rank $d_i= d_{i-1}$.  It is a pair  
$(I_i,J_i)$ of maps 
\begin{align*}
I_i:\, &\bC\rightarrow V_i|_{\lambda_i}=  V_{i-1}|_{\lambda_i},\\
J_i:\, &V_i|_{\lambda_i}=  V_{i-1}|_{\lambda_j}\rightarrow \bC. 
\end{align*}
\end{description}

These data satisfy equations:
\begin{enumerate}
\item
On the interior of the intervals, the Nahm data are smooth solutions of the Nahm equations, ordinary differential equations  discovered by Nahm in his early work on monopoles \cite{Nahm:1979yw}:  
$ \frac{dT_1}{ds} = [T_2,T_3],$ 
$\frac{dT_2}{ds} = [T_3,T_1],$ 
$\frac{dT_3}{ds} = [T_1,T_2].$
One can put in a gauge freedom by replacing the derivative $\frac{d}{ds}$ with a unitary  covariant  derivative
$\nabla$; the equations become 
\begin{equation}\label{Eq:Nahm} \nabla T_i =\sum_{j,k} \epsilon_{ijk} T_j T_k.\end{equation}

One can rewrite these equations with a spectral parameter, which is in fact the twistor parameter $\zeta\in\mathbb{P}^1.$ One has a Lax pair: 
\begin{align}\label{Laxpair}
A(\zeta,s) &= T_1 +\mathrm i T_2 -2 T_3\zeta - (T_1-\mathrm i T_2)\zeta^2,\\
D(\zeta,s) & =  -T_3- (T_1-\mathrm i  T_2)\zeta,
\end{align}
with the Lax equation
\begin{equation}\label{Laxpair2}[\nabla+D(\zeta, s),A(\zeta,s)]  = 0\end{equation}
being equivalent to the system of the Nahm equations \ref{Eq:Nahm}.

\item At $\lambda$-points the Nahm datum satisfies  matching conditions, which are the same as those which appear in Nahm's original work on monopoles. Suppose first that $d_{i-1}<d_i$, we ask that near $\lambda_i$, on the Hermitian bundle  $V_{i-1}$ of rank $d_{i-1}$  the solution $T^{i-1}_j$ to Nahm's equations on  $( \lambda_i -\epsilon, \lambda_i]$ be smooth up to the boundary.
 
On the Hermitian bundle $E_i$  of rank  $d_i$, over $[\lambda_i, \lambda_i+\epsilon)$, the solutions $T^{i}_j$ should be analytic on the interior, with the connection part of the solution smooth at the boundary. The matrices $T^{i}_j$, however  can develop a simple pole at $\lambda_i$. Decomposing in the vicinity of $\lambda_i$ the bundle $V_i$ into  the sum of $V_{i-1}|_{\lambda_i}\times[\lambda_i,\lambda_i+\epsilon)$ and its orthogonal complement, 
 
\begin{equation*}
T_j(s)=\begin{pmatrix}a_j(s)                & b_j( s)\\
                        c_j(s)&-\frac{\rho_j}{2(s-\lambda_i)}+d_j(s)                 \end{pmatrix}.
                         \end{equation*}
Here, the top left block is $d_{i-1}\times d_{i-1}$, the bottom right block is
$m\times m$, with $m= d_i-d_{i-1}$; we ask that $a_j,b_j,c_j, d_j$ be analytic at $s=\lambda_i$, and $\{\rho_j\}_{j=1}^3$ be the  components of the $m$-dimensional irreducible representation  of $su(2)$ in its standard basis, that is the standard representation in $m$ dimensions of the Pauli matrices.
Furthermore, the  solutions on the two intervals should  match   by
\begin{equation*}
a_j(\lambda_i) = T^{i-1}_j(\lambda_i).
\end{equation*}
In the same way, the connection coefficients match also.  See \cite{Second} for details.
 
 \bigskip
\noindent {\it When $d_{i-1}>d_i$:}
 
 One has the same boundary conditions, with the roles of the two intervals and corresponding bundles reversed.
 
 \bigskip
\noindent {\it When $d_{i-1}=d_i$:}
  We have an identification of the fibres from both sides. 
 One asks that the solutions $A^{i-1}, A^i$ have respective finite limits at  $\lambda_i$, with a difference that is of rank one:

\begin{equation}\label{lambdapoint}
A^i(\zeta,\lambda_i)-A^{i-1}(\zeta,\lambda_i) = (I_i -J_i^\dagger\zeta)(J_i +I_i^\dagger\zeta).
\end{equation}
  The connection is continuous at the boundary under the identification.

\item At $p$-points the matrices $A^{n,j-1}(\zeta,p_j),A^{n,j }(\zeta, p_j)$ are required to satisfy:
\begin{align}\label{ppoint}
A^{n,j-1}(\zeta,p_j)=& (M_j^R+\zeta (M_j^L)^\dagger)(M_j^L-\zeta(M_j^R)^\dagger) + z_j(\zeta),\\ 
A^{n,j }(\zeta, p_j) =& ( M_j^L-\zeta (M_j^R)^\dagger)(M_j^R +\zeta (M_j^L)^\dagger)  + z_j(\zeta)\end{align}
 
Here $z_j(\zeta) = (p^1_j-\ii p^2_j) -2 p^3_j \zeta - (p^1_j-\ii p^2_j)\zeta^2$ encodes the position $(p^1_j,p^2_j, p^3_j)$ of the $j$-th NUT.
 
\end{enumerate}

The gauge equivalence class of each bow solution $(\nabla, T_1,T_2,T_3, B^L,B^R,I,J)$ corresponds to an instanton on TN$_k$ with NUTs at $\vec{p}_j$ and its asymptotic holonomy eigenvalues $\exp(2\pi\ii\lambda_j/\ell).$ 

\subsection{Bow complexes}\label{Sec:BowClxs}

The Nahm equations are reductions to one dimension of the anti-self-duality equations in four dimensions. As is usual for these equations, the Nahm equations admit a version of the Kobayashi-Hitchin correspondence, brought to light by Donaldson in \cite{Donaldson:1985id}.  In this context the correspondence involves splitting the equations into two pieces, one complex, and the other real; the relevant boundary conditions likewise get split into complex and   real parts.  The complex equation and its boundary conditions determines a holomorphic object and corresponds to fixing one complex structure in a hyperk\"ahler family; the real equation is variational in nature, and is   the Euler-Lagrange equation for  an appropriate hermitian metric. The correspondence tells us that to each (stable) solution to the complex equations (with boundary conditions), there is a unique solution to the real equations (with boundary conditions), obtained by choosing appropriate metrics.  {Thus, to describe moduli, one only needs to consider stable solutions to the complex  equations (with boundary conditions);  in general for these moduli problems, these solutions with their boundary conditions are called Nahm complexes, and here  in this particular context are referred to as bow complexes.} 

In our case, the complex data is obtained by choosing to restrict our parameter $\zeta$ to zero in (\ref{Laxpair}),(\ref{Laxpair2}),  (\ref{lambdapoint}), (\ref{ppoint}). One has 
\begin{itemize}
\item On the interior of the  intervals, sections $A^i(0,z) = T_1(z) +\ii T_2(z), A^{n,j}(0,z)$ of the bundles $End(E_i), End(E_{n,j})$, and a complex connection $\nabla_z = \frac{d}{dz} + D(0,z)$, with $\nabla_z A(0,z) = 0;$
\item At the $\lambda$-points, matching conditions given by specializing the Nahm conditions to $\zeta = 0$;
\item At the $p$-points, again specializing to $\zeta = 0$:
\begin{align}\label{ppoint2}
A^{n,j-1}(0,p_j)=&  M_j^R M_j^L + z_j(0),\\ 
A^{n,j }(0, p_j) =&  M_j^L M_j^R  + z_j(0).
\end{align}

\end{itemize}
\begin{proposition} An instanton bundle encodes a bow complex. In terms of the sheaves $P,Q, R$
\begin{itemize}
\item On the interval $[\lambda_i, \lambda_{i+1}]$, the rank $d_i$ bundle is given as $[\lambda_i, \lambda_{i+1}]\times H^0(\bP^1, Q_i^{n-i})$, with the trivial connection. The endomorphism $A^i(0,z)$ is given by the action of multiplication by $\eta$ on $H^0(\bP^1, Q_i^{n-i})$, and so is given by the matrix $\beta_i$ of (\ref{resol-P-Q}) . Likewise, on the intervals
$[\lambda_n-\ell,p_1],$ $[p_1,p_2], ...[p_k,\lambda_1]$, we have constant fibres $H^0(\bP^1, Q_{n}^ 0),$ $H^0(\bP^1,$ $Q_{n,1}^ 0),$ $\ldots,$ $H^0(\bP^1, Q_{n,k}^ 0)$, of ranks $d_n,d_{n,1},..,d_{n,k}= d_0$ with covariant constant endomorphisms given by the matrices $\beta_n, \beta_{n,1},...,\beta_{n,k}= \beta_0$ of the resolutions in (\ref{matrices}, \ref{matricesQnj} ).
\item At the $\lambda$ points, the identifications at the boundary  are ensured by the maps of sheaves $Q_i^{n-i}\leftarrow P_i^{n-i-1}\rightarrow Q_{i+1}^{n-i-1}$, and the induced maps on global sections.
\item Likewise at the $p$-points, the  correspondences emerge from the sheaf maps $Q_{n,j-1}^{0}\leftarrow R_j\rightarrow Q_{n,j}^{0}$ and the induced maps on global sections. This identifies  $M_j^R$ as $M_{\xi_j}$, and $M_j^L$ as $M_{\xi_j}$;  referring to   (\ref{matrices}), this gives, once one takes resolutions:
 \begin{align} 
 \beta_{n,j} &= M_{\xi_j}M_{\psi_j} + z_j,&
  \beta_{n,j-1} &= M_{\psi_j}M_{\xi_j} + z_j.
 \end{align}
\end{itemize}
\end{proposition}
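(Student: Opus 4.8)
The plan is to read the linear data on the circle straight off the resolutions of the sheaves $P$, $Q$, $R$, $R_{j}$ assembled in Propositions~\ref{matrices} and~\ref{matricesQnj}, and then to check case by case that it solves the complex ($\zeta=0$) bow equations of Section~\ref{Sec:BowClxs}. On a subinterval I put the constant bundle whose fibre is the space of global sections of the $Q$-sheaf attached to it, with the trivial complex connection (equivalently, work in the complex gauge $D(0,z)=0$, which is legitimate for the complex Nahm flow on an interval). Applying $H^{0}(\bP^{1},-)$ to the resolution $0\to\pO(-1)^{d_{i}}\xrightarrow{\eta\bI-\beta_{i}}\pO^{d_{i}}\to Q_{i}^{n-i}\to 0$ of (\ref{resol-P-Q}), and using $H^{0}(\pO(-1))=H^{1}(\pO(-1))=0$, identifies $H^{0}(\bP^{1},Q_{i}^{n-i})$ with $\bC^{d_{i}}$ (so the rank equals the length $d_{i}$, the $h^{1}$-vanishing being the lemma above) and shows that multiplication by $\eta$ acts as the constant matrix $\beta_{i}$; the same holds for the $Q_{n,j}^{0}$ with $\beta_{n,j}$, and $\beta_{n,0}=\beta_{n}$, $\beta_{n,k}=\beta_{0}$ by Proposition~\ref{matricesQnj}. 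Setting $A^{i}(0,z)\equiv\beta_{i}$, the interior equation $\nabla_{z}A=0$ is then immediate.

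For the matching at a $\lambda$-point $\lambda_{i+1}$ I would apply $H^{0}(\bP^{1},-)$ to the two sequences (\ref{p-to-q-sequence}) and to the commuting square (\ref{PtoQ}) of their resolutions; this gives maps $H^{0}(Q_{i}^{n-i})\leftarrow H^{0}(P_{i}^{n-i-1})\rightarrow H^{0}(Q_{i+1}^{n-i-1})$ with the middle space one dimension larger than each outer space when the ranks agree and jumping by $|m_{i+1}|$ otherwise. In the rank-equal case one reads $I_{i+1}=\alpha_{i}$, $J_{i+1}=\gamma_{i}$, and relation (\ref{mat}), with $A_{i}$ normalized to $\bI$, reads $\beta_{i+1}-\beta_{i}=\alpha_{i}\gamma_{i}$, which is exactly the $\zeta=0$ form of (\ref{lambdapoint}). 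When the ranks differ, the relevant component of the sheaf map is the Hermitian injection/projection, and one invokes the standard dictionary (Hitchin, Hurtubise--Murray, Donaldson), identifying the cokernel of $\eta\bI-\beta$ on the larger space with a Nahm datum carrying a simple pole whose residue is the $|m_{i+1}|$-dimensional irreducible $su(2)$ representation and whose regular part matches $\beta$ of the smaller interval, with (\ref{mat}) as the algebraic shadow of that matching.

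At a $p$-point $p_{j}$ I would use Proposition~\ref{matricesQnj} directly: the maps $Q_{n,j-1}^{0}\leftarrow R_{j}\rightarrow Q_{n,j}^{0}$ induce on global sections the maps $M_{\psi_{j}}$, $M_{\xi_{j}}$ subject to $\beta_{n,j}=M_{\xi_{j}}M_{\psi_{j}}+z_{j}$ and $\beta_{n,j-1}=M_{\psi_{j}}M_{\xi_{j}}+z_{j}$; since $A^{n,j}(0,p_{j})=\beta_{n,j}$ and $A^{n,j-1}(0,p_{j})=\beta_{n,j-1}$ are the constant endomorphisms on the two adjacent intervals, identifying $M_{j}^{L}$, $M_{j}^{R}$ with $M_{\xi_{j}}$, $M_{\psi_{j}}$ in the order fixed by (\ref{ppoint2}) turns these relations into (\ref{ppoint2}). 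Assembling the three pieces produces a complex solution on the whole circle; the $Tor$-condition of Theorem~\ref{P,Q-properties} is precisely the nondegeneracy that makes it a genuine bow complex, and the Kobayashi--Hitchin correspondence for bows of \cite{Nakajima:2016guo} upgrades it to a bow solution, matching the Nahm transform of the instanton \cite{Second}.

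The one genuinely hard point is the rank-jumping $\lambda$-point: one must verify that a commuting square of length-two resolutions with constant matrices $\beta_{i},A_{i},\alpha_{i},\gamma_{i}$ obeying (\ref{mat}) really reproduces the analytic pole-and-matching data of the Nahm complex — the size-$|m_{i}|$ $su(2)$ residue, the matching of the regular part, and the continuity of the complex connection. This is the step that uses earlier structure in an essential rather than merely formal way, and whose careful treatment is the part deferred to \cite{Second}.
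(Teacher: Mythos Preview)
Your sketch is correct and more explicit than the paper's own argument, which is essentially a pointer to \cite{Hurtubise:1989wh}, \cite{Hurtubise:1989qy}, \cite{Charbonneau:2006gu} together with a conceptual explanation. The emphasis, however, is genuinely different. You verify directly from the resolutions that the data $(\beta_i,A_i,\alpha_i,\gamma_i,M_{\xi_j},M_{\psi_j})$ satisfy the $\zeta=0$ bow-complex equations; the paper instead explains \emph{why} this data is the Nahm complex of the instanton, via the spectral-curve picture: the Nahm bundle on each interval is $H^0(S_i,L_t)$ for a line bundle $L_t$ over a spectral curve $S_i\subset T\bP^1$, and restricting to a single fibre of $T\bP^1\to\bP^1$ identifies this with $H^0(\bP^1,Q_i^{n-i})$, so that multiplication by $\eta$ (i.e.\ $\beta_i$) is the restriction of the tautological endomorphism $A(\zeta,s)$. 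Your route is self-contained algebra and gives a clean check of the relations; the paper's route explains the link to the analytic Nahm transform and to the twistor data, which is what justifies calling the resulting bow complex ``the'' bow complex of the instanton rather than merely ``a'' bow complex built from the same numbers.

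One small caution on your $\lambda$-point discussion: in the equal-rank case you normalise $A_i$ to $\bI$, but $A_i$ is only guaranteed invertible because the map $P_i^{n-i-1}\to Q_{i+1}^{n-i-1}$ is surjective with locally free kernel $\pO(m_i)$ and $d_i=d_{i+1}$; you should say this. In the rank-jumping case you correctly flag that translating~(\ref{mat}) into the pole-plus-matching data is the nontrivial step; the paper does not prove this either, but absorbs it into the cited earlier work rather than into \cite{Second}.
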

  {The proof is a minor modification of what is already found in Section 3 of  \cite{Hurtubise:1989wh}, 
Section 4 of  \cite{Hurtubise:1989qy}, Section 5 of \cite{Charbonneau:2006gu}; in all of these cases, the holomorphic instanton or monopole bundle is described in term of simple sheaves, analogous to $\pP, \pQ$, and the Nahm complex data is extracted in the same way. It is perhaps worth briefly explaining where the correspondence with the holomorphic data comes from; the picture is quite general, and holds for monopoles, calorons, and here in the Taub-NUT  background. The solutions to Nahm's equations associated to the instanton are given, for a fixed $t$ in the relevant interval,  as endomorphisms of the space of sections $V^i_t$ of what is generically a line bundle $L_t$ over spectral curves $S_i$ sitting in $T\bP^1$; the space $V^i_t$ of sections is intrinsically the set of $L^2$ solutions to a shifted Dirac operator (again, $t$ parametrizes the shift) , and gets identified with global sections of $L_t$ over the $S_i$.  The sheaf $Q_i^{n-i}$ is essentially the restriction of this line bundle $L_t$ to the intersection of   a fibre of $T\bP^1\rightarrow \bP^1$ with $S_i$ ( this intersection is a sum of points), i.e a restriction of the twistor data to one complex structure; we have, by restriction to this fibre, that $V^i_t = H^0(\bP^1,Q_i^{n-i})$.  The sheaves $P_i^{n-i-1}$ also correspond to global sheaves on $T\bP^1$, and once restricted to a fiber, map to both $Q_i^{n-i}$ and $Q_{i+1}^{n-i-1}$, provide the links between the bundles on adjacent intervals, as noted in the statement of the Proposition.}

While in the preceding proposition, the data for the bow complex is given in terms of sheaves, more concretely, as is already apparent, the essential data for a Nahm complex arises from the matrices obtained by resolving the sheaves. These matrices will satisfy certain relations, typically arising from the commutation of the diagrams of resolutions. Thus, we have:

\begin{itemize}
\item Complex matrices $\beta_i$, of size $d_i\times d_i$, $i= 0,..n,$
\item Complex matrices $\beta_{n,j}$ of size $d_{n, j}\times d_{n,j}$, $j = 0,..k$, with the convention  $\beta_{n,0} = \beta_n, \beta_{n,k} = \beta_n,$
\item Complex matrices $A_i$ of size $d_{i+1}\times d_i$, $ i= 0,...,n-1$,
\item Complex matrices $\alpha_i$ of size $d_{i+1}\times 1$, $ i= 0,...,n-1$,
\item Complex matrices $\gamma_i$ of size $1\times d_{i+1} $, $ i= 0,...,n-1$,
\item Complex matrices $M_{\xi_j}, M_{\psi_j}$ of size $ d_{n,j }\times d_{n,j-1}, d_{n,j-1}\times d_{n,j} $, $ j= 0,...,k$,
\end{itemize}
with relations:

\begin{align}
 0= &\beta_{i+1} A_i - A_i \beta_i - \alpha_i\gamma_i,& i&=0,..,n-1,  \\
\beta_{n,j} = & M_{\xi_j}M_{\psi_j} + z_j,\\
 \beta_{n,j-1} =& M_{\psi_j}M_{\xi_j} + z_j,
\end{align}
and in addition the genericity condition, that the sequences
\begin{equation}\label{exact-p-q2}0\rightarrow \pO(-1)^{d_{i }} \xrightarrow{\begin{pmatrix}\eta-\beta_i\\ -\gamma_i\\-A_i \end{pmatrix}}  \pO^{d_{i}+1}\oplus \pO^{d_{i+1}}(-1)\xrightarrow{\begin{pmatrix}A_i, \alpha_i, \eta-\beta_{i+1}   \end{pmatrix}} \pO^{d_{i+1}}\rightarrow 0 
\end{equation}
 be exact (pointwise) for all $\eta$.
 
 This data, plus a stability condition, is (up to a shift by scalars in the $\beta_i$), is exactly the quiver data of Nakajima and Takayama \cite{Nakajima:2016guo} for a bow solution of Nahm's equations. The stability condition is given in \cite[Sec.2.4]{Nakajima:2016guo}; it is essentially an irreducibility condition, saying that there are no sub-objects or quotient objects. With this, 
 \begin{proposition} (Nakajima and Takayama) The data above determines a bow solution.\end{proposition}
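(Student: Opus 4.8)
The plan is to recognize the listed matrix data, together with its relations and the exactness condition (\ref{exact-p-q2}), as a \emph{bow complex} in the sense of Section~\ref{Sec:BowClxs} --- the holomorphic datum obtained from a bow solution by fixing the complex structure $\zeta = 0$ --- and then to invoke the Kobayashi--Hitchin correspondence for the bow version of Nahm's equations, established by Nakajima and Takayama in \cite{Nakajima:2016guo}, which upgrades a stable bow complex to a genuine solution of the full system (\ref{Eq:Nahm}), (\ref{lambdapoint}), (\ref{ppoint}).

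First I would set up the dictionary. On each $\lambda$-interval $[\lambda_i,\lambda_{i+1}]$ one takes the trivial bundle $[\lambda_i,\lambda_{i+1}]\times\bC^{d_i}$ with trivial complex connection $\nabla_z = d/dz$ and the constant endomorphism $A^i(0,z) = \beta_i$, and similarly $A^{n,j}(0,z) = \beta_{n,j}$ on the $p$-intervals; these manifestly satisfy the complex Nahm equation $\nabla_z A = 0$ on the interiors. At a $\lambda$-point $\lambda_i$ the triple $(A_i,\alpha_i,\gamma_i)$, subject to $\beta_{i+1}A_i - A_i\beta_i - \alpha_i\gamma_i = 0$ together with the exactness of (\ref{exact-p-q2}), encodes precisely the $\zeta=0$ specialization of the matching data --- the rank-one-difference jump (\ref{lambdapoint}) when $d_{i-1}=d_i$, and the $su(2)$-pole together with analytic-matching conditions when the ranks differ --- the relevant maps being read off from the sheaf maps $Q_i^{n-i}\leftarrow P_i^{n-i-1}\rightarrow Q_{i+1}^{n-i-1}$ as in the preceding Proposition. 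At a $p$-point the pair $(M_{\xi_j},M_{\psi_j})$ with $\beta_{n,j} = M_{\xi_j}M_{\psi_j} + z_j$, $\beta_{n,j-1} = M_{\psi_j}M_{\xi_j} + z_j$ is exactly (\ref{ppoint2}), the $\zeta=0$ form of (\ref{ppoint}), under $M^R_j = M_{\psi_j}$, $M^L_j = M_{\xi_j}$. Thus the data is a bow complex, the shift by scalars in the $\beta_i$ being absorbed into the normalization (\ref{ordering}) of the $\lambda$-points, and the stability hypothesis matches the irreducibility condition of \cite[Sec.~2.4]{Nakajima:2016guo}.

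Next I would invoke the existence half of the correspondence: complexify the bow gauge group and seek, within the complex gauge orbit of the given bow complex, a representative that additionally solves the real (moment-map) part of the bow equations; this recovers the $\zeta$-dependence and the unitary structure $(\nabla_s,T_1,T_2,T_3)$. As for monopoles \cite{Donaldson:1985id} and the multi-interval case \cite{Hurtubise:1989wh,Hurtubise:1989qy}, this is the variational problem of minimizing a Donaldson-type energy over Hermitian metrics on the bundles over the subintervals, with stability guaranteeing that the functional is proper (no destabilizing sub- or quotient-bow-complex along which the infimum escapes to the boundary of the space of metrics) and hence attains its minimum, and with convexity giving uniqueness up to unitary gauge. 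One would then check that matrices related by the unitary and complex symmetry groups yield gauge-equivalent bow solutions, so that the construction descends to isomorphism classes.

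The hard part will be this last analytic step --- solving the real equation with the singular $su(2)$-pole and quasi-periodic boundary conditions characteristic of the bow, i.e.\ the Uhlenbeck--Yau/Simpson-type existence and regularity theory adapted to this one-dimensional, multi-interval, quasi-periodic setting, including the delicate boundary behaviour at the $\lambda$-points where the ranks jump. That analysis is exactly what Nakajima and Takayama carry out in \cite{Nakajima:2016guo}, so the actual content left to us here is the first step: translating the sheaf-theoretic data of Proposition~\ref{matrices} into their quiver data, which is essentially bookkeeping.
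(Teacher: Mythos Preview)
Your proposal is correct and matches the paper's approach: the paper does not prove this proposition at all but simply attributes it to Nakajima and Takayama \cite{Nakajima:2016guo}, having just observed that the matrix data with relations and the exactness condition coincide (up to a scalar shift in the $\beta_i$) with their quiver data plus stability. Your sketch of the Kobayashi--Hitchin mechanism behind their result is accurate and appropriate context, though strictly more than the paper itself supplies.
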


\section{$SO$ and $Sp$-bundles}

Let us now consider the cases of $SO(n)$ and $Sp(n/2)$ instantons; here the $n$  denotes the rank of the vector bundle in the standard representation, so that $n$ is even in the $Sp$ case. Our instantons are $U(n)$ instantons, equipped with extra structure; a trivial top exterior product and a pairing, either symmetric or skew;  and of course a connection compatible with these structures. The eigenvalues of the asymptotic holonomy come in pairs, that are inverses of one another; taking logs, one has $\lambda_i = -\lambda_{n-i+1}$. The normalisation we then use for these eigenvalues is 
$$-\ell/2 <\lambda_1 <\lambda_2<...< (\lambda_{n-1}= -\lambda_2)  <   (\lambda_{n}= -\lambda_1) <\ell/2.$$
With this ordering, the asymptotic form of the pairing is anti-diagonal.

The compactification process then defines a bundle $E$ on $X$, as for the $U(n)$ case. The fact that the instanton has trivial determinant tells us that $E$ also has trivial determinant. In addition, the flags $E_{0,i}$ and $E^{\infty,i}$ that we obtain as for the $U(n)$ case along $C_0, C_\infty$ are isotropic-coisotropic; that is the first half of the spaces are isotropic, and the remaining ones are the annihilators of the first ones. 

For the various degrees, the triviality of the determinant and the isotropy-coisotropy imply:
\begin{align}
n_i=&0,\\
m_i =& -m_{n-i+1},\\
d_i = & d_{n-i}.
\end{align}

With that, we can go through the same procedure as for $U(n)$ and obtain sheaves $Q_i^{n-i} , P_i^{n-i-1} $. When one puts in the pairings, we will see that we will have three equivalent sets of data.
 \begin{proposition}
 Let $E$ be an $SO(n)$ (alternately, $Sp(n/2)$) instanton. Then one has equivalent data:
\begin{itemize} 
\item{} The   symmetric  (alt. skew-symmetric)  holomorphic pairing on $E$;
\item{} Suitable meromorphic pairings $<,>_i: P^{ n-i -1}_{i }\otimes P^{i}_{n-i-1}\rightarrow {\pO}$, with poles over the support of the sheaves $Q$, with $<,>_i = <,>_{n-i}$ ( alt. $<,>_i = -  <,>_{n-i}$)
\item{} Suitable pairings
$\{,\}_i: H^0(\bP^1, Q_i^{n-i}) \otimes H^0(\bP^1, Q^i_{n-i})\rightarrow \bC$ with $\{,\}_i = -  \{,\}_{n-i}$ ( alt. $\{,\}_i =   \{,\}_{n-i}$)
\end{itemize}
The formulae linking these pairings are given in (\ref{EtoP}), (\ref{EtoQ}) and (\ref{PtoQ2}), below.
\end{proposition}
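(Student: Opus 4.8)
The approach is to trace the self-duality of $E$ through the two functorial constructions $E\mapsto E_i^{j}\mapsto R^1\pi_*$ that produced the sheaves $P$ and $Q$, and then to invert the procedure using the reconstruction half of Theorem~\ref{P,Q-properties}. One goes around the triangle in the order $E\to P\to Q$ and then closes it up. \textbf{Step 1 (from $E$ to $P$).} The symmetric (resp.\ skew) pairing gives $\phi\colon E\xrightarrow{\sim}E^{*}$, and isotropy--coisotropy of the flags says exactly that $\phi|_{C_0}$ realizes $E_{0,a}=(E_{0,n-a})^{\perp}$ and is anti-diagonal in a splitting $E|_{C_0}=\bigoplus_a\pO(m_a)$ with $\pO(m_a)$ dual to $\pO(m_{n-a+1})$ --- which is where $m_a=-m_{n-a+1}$ comes from --- and likewise along $C_\infty$. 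A short diagram chase (or a first-Chern-class computation, using $n_i=0$) then shows that $\phi$ carries $E_i^{n-i-1}\subset E$ isomorphically onto $\{\psi\in E^{*}:\psi|_{C_0}\text{ kills }E_{0,n-i},\ \psi|_{C_\infty}\text{ kills }E^{\infty,i+1}\}$, which is the dual $\mathcal{H}om(E_i^{\sharp},\pO)$ of the elementary upper modification $E^{\sharp}_i$ of $E$ (a simple pole at $C_0$ in the directions $E_{0,n-i}$ and at $C_\infty$ in the directions $E^{\infty,i+1}$).

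Dually this reads as a perfect pairing $E_i^{n-i-1}\otimes E_i^{\sharp}\to\pO$; since $E_i^{\sharp}$ differs from $E_{n-i-1}^{i}$ only by sheaves supported on $C_0\cup C_\infty$, a relative Serre-duality computation along $\pi\colon X\to\bP^1$ turns it, after push-down, into a pairing $\langle,\rangle_i\colon P_i^{n-i-1}\otimes P_{n-i-1}^{i}\to\pO$ which is an isomorphism of sheaves on $\bP^1$ away from $\mathrm{supp}\,\mathcal Q$ and acquires poles exactly there, that being where the relevant direct images jump; transposing $\phi$ interchanges $E_i^{n-i-1}$ with $E_{n-i-1}^{i}$, i.e.\ $i\leftrightarrow n-i$, and multiplies the pairing by $+1$ (resp.\ $-1$), giving $\langle,\rangle_i=\langle,\rangle_{n-i}$ (resp.\ $=-\langle,\rangle_{n-i}$). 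This is the content of (\ref{EtoP}). \textbf{Step 2 (from $P$ to $Q$).} Feeding the exact sequences (\ref{p-to-q-sequence}) and the defining data (\ref{r-definition})--(\ref{r-matrix}) for $R$ into $\langle,\rangle_i$: because $Q_i^{n-i}$ is the cokernel $P_i^{n-i-1}/\pO$ and $P_i^{n-i-1}$ has a length-two resolution, the residue of $\langle,\rangle_i$ along $\mathrm{supp}\,\mathcal Q$ produces a finite-dimensional pairing $\{,\}_i$ on $H^0(\bP^1,Q_i^{n-i})$, and this residue (an $\mathcal{E}xt^1$, equivalently a Serre-duality shift) reverses symmetry --- which is precisely why $\{,\}_i=-\{,\}_{n-i}$ for $SO$ while $\{,\}_i=\{,\}_{n-i}$ for $Sp$. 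On the resolutions of Proposition~\ref{matrices} this is linear algebra: it says each $\beta_i$ is adjoint (up to sign) to $\beta_{n-i}$, that $A_i,\alpha_i,\gamma_i$ go to $\pm$ their transposes under $i\leftrightarrow n-i$ compatibly with (\ref{mat}), and that $M_{\xi_j},M_{\psi_j}$ are mutually adjoint compatibly with (\ref{mat2}); compatibility with the $p$-point maps $R_j\to Q_{n,j}^{0}\oplus Q_{n,j-1}^{0}$ is read off the same way. Formulae (\ref{EtoQ}) and (\ref{PtoQ2}) record the composite $E\to Q$ and the map $P\to Q$, and one checks that (\ref{EtoP}) followed by (\ref{PtoQ2}) equals (\ref{EtoQ}).

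\textbf{Step 3 (reconstruction).} Conversely, start from $\{,\}_i$ (equivalently $\langle,\rangle_i$) with these compatibilities and symmetry and run the reconstruction of Theorem~\ref{P,Q-properties}: the $P$'s, $Q$'s, $R$ and the maps rebuild $\mathcal P,\mathcal Q$ and the surjection $\mathcal P\xrightarrow{\epsilon}\mathcal Q$ with $E=\ker\epsilon$. The pairings on the summands, being compatible with $\epsilon$, assemble into a datum on $\mathcal P$ (and on $\mathcal Q$) restricting to a pairing on $E$; compatibility with the maps is exactly what forces the a priori meromorphic pairing to be regular on $E$ (the poles over $\mathrm{supp}\,\mathcal Q$ cancel), the $\mathrm{Tor}$-condition that makes $E$ locally free is what makes it non-degenerate, the (anti-)symmetry of the input gives the (skew-)symmetry on $E$, and $\det E$ is trivial because the $n_i$ vanish; so one gets back an instanton bundle with its $SO$ (resp.\ $Sp$) structure satisfying Theorem~\ref{instanton-to-bundle}.

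I expect the main difficulty to lie in Steps 1 and 3 together: pinning down the precise twisting sheaf and, above all, the signs, so that the symmetry statements are mutually consistent across the $E$-, $P$- and $Q$-levels (the switch between $\langle,\rangle_i=\langle,\rangle_{n-i}$ and $\{,\}_i=-\{,\}_{n-i}$ is the visible symptom of this bookkeeping), and then proving in Step 3 that the pairing rebuilt purely from the $P$--$Q$--$R$ data is genuinely $\pO$-valued and perfect on $E$ rather than merely meromorphic. That last cancellation --- the statement that the poles over $\mathrm{supp}\,\mathcal Q$ disappear when one restricts to $E=\ker\epsilon$ --- is the real content beyond formal functoriality, and is where the $\mathrm{Tor}$-condition and the compatibility of the pairings with the zig-zag maps of (\ref{sequence_RQ}) must be used in an essential way.
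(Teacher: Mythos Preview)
Your outline is broadly correct and hits the same three vertices (pairing on $E$, on the $P$'s, on the $Q$'s) that the paper does, but the route and the supporting arguments differ in several places.

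\medskip

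\textbf{Where you diverge from the paper.} The paper does not pass through an ``elementary upper modification'' $E_i^{\sharp}$ or invoke relative Serre duality abstractly. Instead it writes down the $P$-pairing by an explicit residue formula: one introduces the rank-one sheaf $S_{i+1}^{n-i}=\pi_*(E_{i+1}^{n-i})$, uses the birational identification $\phi_i:S_{i+1}^{n-i}\to P_i^{n-i-1}$, and sets $\langle s,t\rangle_i=\mathop{Res}_{\xi}(\phi_i^{-1}(s),t)_E\,d\xi/\xi$. The paper then pins the pairing down completely in coordinates by solving $W_i^TU=0$, $W_{n-i-1}^TV=0$ on the resolutions, obtaining a rank-one matrix $UV^T$ for $\langle,\rangle_i$ and fixing the scalar $f_i$ by looking at $\eta=\infty$. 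Your Step~1 is a reasonable categorical paraphrase of this, but the paper's explicit matrix form is what drives the later lemmas (adjointness of $\beta_i,\beta_{n-i}$ and of $\alpha_i,\alpha_{n-i-1}$).

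More importantly, in Step~2 you obtain the $Q$-pairing as the \emph{residue} of the $P$-pairing and attribute the sign flip to an ``$\mathcal{E}xt^1$/Serre shift''. The paper goes the other way: it defines $\{,\}_i$ \emph{directly} from $E$ via the cup product $H^1(X,E_i^{n-i}(-F))\otimes H^1(X,E_{n-i}^i(-F))\to H^2(X,\pO(-C_0-C_\infty-2F))\cong\bC$, and the parity change is simply the Koszul sign of the cup product of two degree-one classes. Only afterwards does the paper relate the two pairings, and in the opposite direction from yours: Lemma~\ref{PtoQ2} expresses $\langle s,t\rangle_i$ in terms of $\{,\}_i$, $\{,\}_{i+1}$ and the value at $\eta=\infty$, proved by matching residues in the generic (simple-spectrum) case and invoking continuity.

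\medskip

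\textbf{What each approach buys.} Your functorial framing is tidier to state and makes the equivalence look inevitable; the paper's explicit cocycle/residue computations are heavier but yield the concrete matrix relations ($K_i\alpha_{n-i-1}=\alpha_i^TK_{i+1}$, $K_i\alpha'_{n-i-1}=\gamma_i^Tf_i$, etc.) that one actually needs for the bow description, and give a one-line reason for the sign flip. Your anticipated difficulty---regularity of the reconstructed pairing on $E$---is handled in the paper exactly by the formula of Lemma~\ref{PtoQ2}: one checks that the combination there is holomorphic when restricted to $\ker\epsilon$, which is the cancellation you describe.
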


Note the change on the parity of the pairings as one goes from the $P^{ n-i -1}_{i }$ to the $ Q_i^{n-i}$. Let us start with a pairing on $E$: this yields a meromorphic pairing on $\oplus \pi^* P_i$.  One has the exact sequence (schematically)
$$0\rightarrow  E \rightarrow \oplus \pi^* P^{i}_{n-i-1} \oplus R \rightarrow \oplus \pi^* Q^i_{n-i}\rightarrow 0,$$
where the map $ E\rightarrow \oplus P^{i}_{n-i-1}$ is an isomorphism 
away from the support of the $Q^i_{n-i}$. One can then push down the pairing to the $P$, away from the support of $Q$; at the support, the pairing will have poles.

In a more explicit fashion, one has the inclusion of sheaves, with the diagonals giving short exact sequences:
  \begin{equation} \label{sequence E}\xymatrix{& &&\pO|_{C_\infty}&\\  && E_{i}^{n-i }\ar[ur]\ar[dr]&&\pO|_{C_\infty}\\
  &E_{i }^{ n-i-1}\ar[ur]\ar[dr]&&E_{i+1}^{n-i }\ar[ur]\ar[dr]&\\  &&E_{i+1 }^{n-i -1}\ar[ur]\ar[dr]&&\pO(m_{i+1})|_{C_0}\\
  & &&\pO(m_{i+1})|_{C_0}} \end{equation}
  
   Taking direct images, one has that $\pi_*(E_{i+1}^{n-i })= S_{i+1}^{n-i }$ is a rank one sheaf. There is then a mapping 
$$\phi_{i}:S_{i+1}^{n-i }\rightarrow P_{i}^{n-i-1},$$
which is an isomorphism away from the support of the $Q$. Since the  $S_{i+1}^{n-i }$ are subsheaves of $\pi_*(E)$, one can apply the pairing to them.
Note that since $ S_{i+1}^{n-i }$ takes values in $E_{0,i+1}$ over $C_0$, and $E^{\infty,n-i}$ over $C_\infty$, and one can evaluate the pairing at any point in the fibre, the only non-zero pairings are between $ S_{i+1}^{n-i }$ and $ S^{i+1}_{n-i }$; it follows that the only non-zero pairings for the $P$ are between $ P_{i }^{n-i-1 }$ and $ P^{i }_{n-i -1}$. More invariantly, using the pairing on $E$, there is a natural pairing 
$$(\cdot,\cdot)_{i} : [\pi_*(E_{i+1}^{n-i })= S_{i+1}^{n-i } ]\otimes [R^1\pi_*(E^{i }_{n-i -1}) = P^{i }_{n-i -1}]\rightarrow R^1\pi_*(\pO(-C_0-C_\infty))\simeq \pO$$
Explicitly, on cocycles, this defines our pairing $<\cdot, \cdot>_{i}$ between $ P_{i }^{n-i-1 }$ and $ P^{i }_{n-i -1}$ by
\begin{equation}\label{EtoP}<s, t>_{i} =  \mathop{Res}_\xi((\phi_i^{-1}(s) , t)) \frac{d\xi}{\xi}.\end{equation}
 One has that $<s, t>_{i} = <t,s>_{n-i-1}$ if the pairing on $E$ is symmetric, and $<s, t>_{i} = - <t,s>_{n-i-1}$ if the pairing on $E$ is skew.

One can use the  resolutions of the sheaves $P, Q$.  Recall that we have exact sequences:

\begin{equation} \label{resolution-PQ}\xymatrix{   
 & \pO(-1)^{d_i} \ar[r]^{Z_{i} } &\pO^{d_i}\ar[rr]& &Q^{n-i}_{i}\\
 \pO(-1)^{d_i}\ \  \ar[dr]^{\tilde X_{i} }\ar[r]^{W_{i}}\ar[ur]^{X_{i} }& \pO^{d_i+1} \ar[dr]^{\tilde Y_{i} }\ar[ur]^{Y_{i} }\ar[rr]&& P^{n-i-1}_{i} \ar[dr] \ar[ur]  \\ 
 & \pO(-1)^{d_{i+1} }\ar[r]^{Z_{i+1}} &\pO^{d_{i+1}}\ar[rr]& &Q^{n-i-1}_{i+1}\\
}
\end{equation}
with

\begin{equation} 
\begin{matrix}
Z_{i}&=&\begin{pmatrix}\eta-\beta_{i}\end{pmatrix},&&Z_{i+1 } &=&(\eta-\beta_{i+1}) \\ \ 
\tilde X_{i }&=& \begin{pmatrix}\alpha_{i}\end{pmatrix},&& \tilde Y_{j }&=& \begin{pmatrix}\alpha_{i}, \alpha'_{i}\end{pmatrix},\\ \\
X_{i}&=& \begin{pmatrix}1\end{pmatrix},&&Y_{i}&=& \begin{pmatrix}1&0 \end{pmatrix},\\ \\
W_{i}&=&  \begin{pmatrix}\eta-\beta_{i} \\ - \gamma_i  \end{pmatrix},&& \\ \\
\end{matrix}\label{matrices-caloron}
\end{equation}

For the $n-i-1$ case, we have
\begin{equation} \label{resolution-PQ2}\xymatrix{   
 & \pO(-1)^{d_{i+1}} \ar[r]^{Z_{n-i-1}} &\pO^{d_{i+1}}\ar[rr]& &Q_{n-i-1 }^{i+1}\\
 \pO(-1)^{d_{i+1}}\ \  \ar[dr]^{\tilde X _{n-i-1}}\ar[r]^{W_{n-i-1}}\ar[ur]^{X_{n-i-1}}& \pO^{d_{i+1}+1} \ar[dr]^{\tilde Y_{n-i-1} }\ar[ur]^{Y_{n-i-1} }\ar[rr]&& P_{{n-i-1}}^{i} \ar[dr] \ar[ur]  \\ 
 & \pO(-1)^{d_{i}} \ar[r]^{Z_{n-i}} &\pO^{d_{i}}\ar[rr]& &Q_{n-i}^{i}\\
}
\end{equation}

\begin{equation} 
\begin{matrix}
Z_{n-i-1}&=&\begin{pmatrix}\eta-\beta_{n-i-1}\end{pmatrix},&&Z_{n-i} &=&(\eta-\beta_{n-i}) \\ \\
X_{n-i-1}&=& \begin{pmatrix}1 \end{pmatrix},&& Y_{n-i-1}&=& \begin{pmatrix}1&0 \end{pmatrix},\\ \\
\tilde X_{n-i-1}&=& \begin{pmatrix}\alpha_{n-i-1}\end{pmatrix},&&\tilde Y_{n-i-1}&=& \begin{pmatrix}\alpha_{n-i-1}&\alpha'_{n-i-1}\end{pmatrix},\\ \\
&&&&W_{n-i-1}&=& \begin{pmatrix}\eta-\beta_{n-i-1}\\ -\gamma_{n-i-1}
 \end{pmatrix}\\ \\
\end{matrix}\label{matrices-caloron2}
\end{equation}

Here we have split the matrices into appropriate blocks. There are relations among the matrices imposed by the commutativity of the diagrams:
\begin{align}  
\begin{pmatrix}\alpha_{i}&\alpha'_{i}\end{pmatrix}\begin{pmatrix} \beta_{i} \\ \gamma_{i}  \end{pmatrix}&= \begin{pmatrix} \beta_{i+1}\end{pmatrix} \begin{pmatrix}\alpha_{{i}}\end{pmatrix} \\
 \begin{pmatrix}\alpha_{n-i-1}&\alpha'_{n-i-1}\end{pmatrix} \begin{pmatrix} \beta_{n-i-1}\\ \gamma_{n-i-1}
 \end{pmatrix}&= \begin{pmatrix} \beta_{n-i}\end{pmatrix} \begin{pmatrix}\alpha_{n-i-1}\end{pmatrix}
 \end{align}
 
 Lifting the pairings on the $P$ to   pairings $<\cdot,\cdot>$ on their spaces of sections, one wants $<W_{i}u, t> = <s,W_{n-i-1}v>=0$ for all $u,v,s,t$. As both $W_{i}, W_{n-i-1}$ are  of maximal rank for $\eta$ generic, the matrix of the pairing must be of rank one, and so of the form $U \cdot V^T$, for    column vectors $U,V$, with $W_{i}^TU = 0,  (W_{n-i-1})^T V= 0$. Write $U = \begin{pmatrix}u\\ u'\end{pmatrix}$, with $u, u'$ of size $m_{i}, 1$ respectively. We then have 
$$W_{i}^TU= (\eta-\beta_{i})^T u -\gamma_{i}^T u' .$$
Solving, we find, up to scale,  
$$u = ( (\eta-\beta_{i})^{-1})^T \gamma_{i}^T , \quad u'  = 1 ;$$
 likewise,   
we get 
$$v =  ((\eta-\beta_{n-i-1} )^{-1})^T \gamma_{n-i-1}^T  ,\quad v'  = 1  ,$$
again up to scale.  The scalar product of two vectors $A = \begin{pmatrix} a\\  a'\end{pmatrix}, B= \begin{pmatrix}  b\\   b'\end{pmatrix}$ is then the product of scalars 
\begin{align*}
<A,B>_{i} =  &(a^Tu +a'u')( b^Tv + b'v') f\\
 =&[ (a^T  (  (\eta-\beta_{i})^{-1})^T \gamma_{i}^T) + a' ]\\
&\cdot [ (b^T ((\eta-\beta_{n-i-1} )^{-1})^T \gamma_{n-i-1}^T) + b' ] \cdot f\\
=&[  \gamma_{i} (\eta-\beta_{i})^{-1} a + a' ]\\
&\cdot [ \gamma_{n-i-1} (\eta-\beta_{n-i-1} )^{-1}b + b' ] \cdot f,
\end{align*}
for some function $f$. Note, that at infinity, this becomes $a'b'f$, and so is finite as long as $f$ is; furthermore if $A $ maps to zero in $Q_{i}^{n-i}$,  and $B$ maps to zero in $Q^{n-i-1}_{i+1}$, so that 
$a = (\eta-\beta_{i})\mu,  b  = (\eta-\beta_{n-i-a})\nu$ for some $\mu, \nu$, the scalar product is again finite if $f$ is at that point. Asking for this finiteness, and the fact that $f$ must be finite elsewhere tells us that $f$ is a constant $f_{i}$.  Thus  the pairing   will be fixed by the value at infinity; we can set
\begin{align} f_{i} &= 1\ ( \mathrm {SO \ case}),\\
f_{i} &=
\begin{cases} 1,&\text{for }\ i<N/2 \\
-1,&\text{for } i\geq N/2
\end{cases}
( \mathrm {Sp \ case}).
\end{align}
Notice that with these definitions the forms have the correct symmetry, between $<,>_i$ and $<,>_{n-i-1}$.

Pairings on the spaces of sections of the $Q^{n-i}_{i}$ can be defined as follows. Recall that 
\begin{align}
Q_{n-i}^{i} =& R^1\pi_*(E_{n-i}^{i})= R^1\pi_*(E_{n-i}^{i}(-F))\\
Q^{n-i}_{i} =& R^1\pi_*(E^{n-i}_{i})= R^1\pi_*(E^{n-i}_{i}(-F))
\end{align}
(The support of the $Q$'s lieis away from $\eta = \infty$,  and so we can twist by $F$); this yields
\begin{align}
H^0(\bP^1, Q_{n-i}^{i}) =& H^1(X,E_{n-i}^{i}(-F)),\\
H^0(\bP^1, Q^{n-i}_{i}) =& H^1(X,E^{n-i}_{i}(-F)).
\end{align}
On the other hand, there is a pairing 
$$E^{n-i}_{i}\otimes  E_{n-i}^{i}\rightarrow \pO(-C_0 -C_\infty);$$
and so an induced pairing 
\begin{equation}\label{EtoQ} \{\cdot,\cdot\}_j: H^1(X,E^{n-i}_{i}(-F))\otimes H^1(X,E_{n-i}^{i}(-F)) \rightarrow H^2(X,\pO(-C_0 -C_\infty-2F)).\end{equation}
The latter is dual to $H^0(X, K_X(C_0+C_\infty+ 2F) )  $, and is one dimensional. The pairing is then perfect (essentially Serre duality) and localises (one can represent the classes by forms supported on a neighbourhood of the pull backs of the support of the $Q$, so that  if two sections of $Q_{n-i}^{i} ,Q^{n-i}_{i} $ have disjoint support, they pair to zero).  Also, from the skew symmetry of the cup product,  a symmetric pairing on $E$ gives skew symmetric pairings on the sections of $Q$, and a skew symmetric pairing on $E$ gives symmetric pairings on sections of $Q$.

\begin{lemma}\label{adjoint}
 
  With respect to the pairing $\{,\}_j$, the maps $\beta_i,\beta_{n-i}$ are  adjoint: $\beta_i = \beta_{n-i}^*$.
Thus, letting the pairing  $  \{,\}_{i}$ have  as matrix $ K_i$ in  a  basis,
  $\beta_i^TK_i  = K_i \beta_{n-i}$.
  
In the same vein the maps 
\begin{align}M_\psi: H^0(\bP^1, Q_0^n)&\rightarrow H^0(\bP^1, Q^0_n) ,\\ M_\xi: H^0(\bP^1, Q^0_n )&\rightarrow H^0(\bP^1, Q_0^n)\end{align} are self adjoint, in the sense that for any sections $u,u'$ in $H^0(\bP^1, Q_0^n)$, and $v,v'$ in $H^0(\bP^1, Q^0_n)$
$$\{M_\psi(u), u'\}_0 = \{u, M_\psi(u')\}_n, \qquad \{M_\xi(v), v'\}_n = \{v, M_\xi(v')\}_0$$
\end{lemma}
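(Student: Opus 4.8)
The plan is to deduce both adjointness statements from a single structural observation: by (\ref{EtoQ}) and the discussion following it, each pairing $\{\,,\,\}_i$ is the composite of the cup product on $X$, the holomorphic bilinear form $\langle\,,\,\rangle_E$ on $E$, and the canonical identification $H^2(X,\pO(-C_0-C_\infty-2F))\xrightarrow{\sim}\bC$; while $\beta_i,\beta_{n-i},M_\xi,M_\psi$ are all induced by multiplication by a holomorphic function. A bilinear form allows such a function to pass from one slot to the other, and no sign ever intervenes because the two cup-product factors are not interchanged.

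Concretely, I would first record the identifications $H^0(\bP^1,Q_i^{n-i})=H^1(X,E_i^{n-i}(-F))$ and $H^0(\bP^1,Q^i_{n-i})=H^1(X,E_{n-i}^i(-F))$, together with the fact that the isotropy--coisotropy of the flags makes $\langle\,,\,\rangle_E$ restrict to a sheaf pairing $E_i^{n-i}\otimes E_{n-i}^i\to\pO(-C_0-C_\infty)$, so that $\{\,,\,\}_i$ is the composite
$$H^1(X,E_i^{n-i}(-F))\otimes H^1(X,E_{n-i}^i(-F))\xrightarrow{\ \cup\ }H^2(X,\pO(-C_0-C_\infty-2F))\xrightarrow{\ \sim\ }\bC,$$
one-dimensional by Serre duality. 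By the resolution (\ref{resol-P-Q}), $\beta_i$ is exactly multiplication by $\eta$ on $H^0(\bP^1,Q_i^{n-i})$ and $\beta_{n-i}$ is multiplication by $\eta$ on $H^0(\bP^1,Q^i_{n-i})$. Since each $Q$ is torsion and supported away from $\eta=\infty$, every class in these $H^1$'s has a \v{C}ech representative supported over a neighbourhood of $\mathrm{supp}(Q_i^{n-i})\subset\bP^1\setminus\{\infty\}$, where $\eta$ is a genuine holomorphic function; factoring $\eta$ out of $\langle\,,\,\rangle_E$ then gives, for $u\in H^0(Q_i^{n-i})$, $v\in H^0(Q^i_{n-i})$,
$$\{\beta_i u,v\}_i=\langle\eta u,v\rangle=\langle u,\eta v\rangle=\{u,\beta_{n-i}v\}_i,$$
with $\langle\,,\,\rangle$ abbreviating the full composite; in a basis this reads $\beta_i^TK_i=K_i\beta_{n-i}$.

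For $M_\xi$ and $M_\psi$ I would argue identically, using $\langle\,,\,\rangle_E\colon E(-C_0)\otimes E(-C_\infty)\to\pO(-C_0-C_\infty)$ and the fact that $M_\psi$ is induced by $\times\psi\colon E(-C_0)\to E(-C_\infty)$ (the simple pole of $\psi$ along $C_0$ is absorbed by the vanishing of $E(-C_0)$ there) and $M_\xi$ by $\times\xi\colon E(-C_\infty)\to E(-C_0)$. The supports of $Q_0^n$ and $Q_n^0$ again lie over a finite subset of $\bP^1\setminus\{\infty\}$, so $\xi,\psi$ are holomorphic there; factoring them out of the bilinear form yields, for $u,u'\in H^0(Q_0^n)$,
$$\{M_\psi(u),u'\}_n=\langle\psi u,u'\rangle=\langle u,\psi u'\rangle=\{u,M_\psi(u')\}_0,$$
and symmetrically $\{M_\xi(v),v'\}_0=\{v,M_\xi(v')\}_n$ for $v,v'\in H^0(Q^0_n)$; the discrepancy in the subscripts $0,n$ just records that $\{\,,\,\}_0$ and $\{\,,\,\}_n$ are the same composite with its two slots labelled in opposite orders.

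These manipulations are formally immediate, so the work of the proof is entirely in the bookkeeping: (i) checking that all four cup products land in the one fixed line $H^2(X,\pO(-C_0-C_\infty-2F))$, which requires tracking the twists by $\pO(-C_0),\pO(-C_\infty),\pO(-F)$ and the isotropy of the flags; and (ii) justifying that the classes may be represented by cochains supported where $\eta$ (resp. $\xi,\psi$) is regular, which is exactly where the hypothesis ``the $Q$'s are torsion sheaves supported away from $\eta=\infty$'' enters. I expect (ii) to be the only genuinely load-bearing point. An alternative, fully explicit route is to push everything through the resolutions of Proposition (\ref{matrices}) and the residue description of the pairings (cf. (\ref{PtoQ2})), which reduces the statement to the evident fact that multiplication by the scalar $\eta$ is self-adjoint for a residue pairing.
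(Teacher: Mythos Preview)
Your proposal is correct and follows essentially the same approach as the paper's own proof: both arguments rest on the single observation that $\beta_i,\beta_{n-i}$ are multiplication by $\eta$ and $M_\psi,M_\xi$ are multiplication by $\psi,\xi$, and that a scalar function passes freely across the bilinear form $\langle\,,\,\rangle_E$ inside the cup-product pairing. The paper's proof is in fact just two sentences to this effect; your version spells out more of the bookkeeping (the twists, the support condition, the identification of the target line) but is not a different argument.
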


 \begin{proof} The maps  $\beta_i,\beta_{n-i}$ are the effect of multiplication by the coordinate $\eta$ on global sections, and one has that 
 $\{\eta t_i, t_{n-i }\}_i  = \{ t_i, \eta t_{n-i }\}_j$. Explicitly, $\beta_i^TK_i  = K_i \beta_{n-i}$. Likewise, the maps $M_\psi, M_\xi$ are the effects on the direct image of multiplication by $\psi, \xi$ respectively. \end{proof}

Explicitly,  if $s$ is a section of $Q^{n-i}_{i}$, one has from  the resolution of $Q^{n-i}_{i}$ that $(\eta-\beta_i)s = 0$ as a section of $Q^{n-i}_{i}$. This means that on a disc $U_\alpha$ surrounding a point of $p_\alpha$ of the support, if $s$ is represented by a cocycle $\sigma = \sigma_{0\infty}$ with respect to the covering $ \pi^{-1}(U_\alpha)\cap (X-C_\infty), \pi^{-1}(U_\alpha)\cap (X-C_0)$, then one can write the cocycle $(\eta-\beta_i)s$ as a coboundary 
$f_0 -f_\infty$; doing so for all the union $U$ of the discs $U_\alpha$ allows us to write
$$\sigma = (\eta-\beta_i)^{-1}(f_0-f_\infty)$$
If now $\tau$ is the cocycle representing a section $t$ of $Q_{n-i}^{i}$, the explicit form for a two-cocycle with respect to the open sets  $\pi^{-1}(U), X-C_0, X-C_\infty$ representing the pairing  in $ H^2(X,\pO(-C_0 -C_\infty-2F))$ is given by 
$$  < (\eta-\beta_i)^{-1}f_0, \tau>_E,$$
or by
$$ - < (\eta-\beta_i)^{-1}f_\infty, \tau>_E.$$
In the generic case, the section $f_0$ can be chosen to be a section of $S_i^{n-i+1}$.
The explicit number coming out of the pairing is then obtained by representing the cocycle as a form 
$$ < (\eta-\beta_i)^{-1}f_0, \tau>_E  \frac{d\xi}{\xi}\wedge d\eta,$$
 and taking the residues both in $\xi$ and $\eta$, and summing over the $\alpha$ in the support of   $Q^{n-i}_{i}$, so that 
\begin{align*}
\{s,t\}_{i} =&   \mathop{Res}_{\eta=\infty} \mathop{Res}_{\xi = 0} \left(< (\eta-\beta_i)^{-1}f_0, \tau>_E  \frac{d\xi}{\xi}\wedge d\eta\right)\\
 = &-\mathop{Res}_{\eta=\infty} \mathop{Res}_{\xi = 0} \left(< (\eta-\beta_i)^{-1}f_\infty, \tau>_E   \frac{d\xi}{\xi} \wedge d\eta\right).
\end{align*}

Our next aim is to relate the pairing on the $P$s to those on the $Q$s. Let $s$ be a section of $P^{n-i-1}_{i}$; it gives sections $\pi_{i} (s) $ of $ Q^{n-i }_{i}$, and $\tilde \pi_{i} (s)$ of $Q^{n-i-1}_{i+1}$; likewise, a section $t$ of $P_{n-i-1}^{i}$ gives sections $\pi_{n-i-1} (t) $ of $ Q_{n-i -1}^{i+1}$, and $\tilde \pi_{n-i-1} (t)$ of $Q^{i}_{n-i}$.

\begin{lemma}\label{PtoQ2} 
 We have
\begin{align*}<s,t>_{i} =& \{(\eta-\beta_{i})^{-1}\pi_{i} (s), \tilde \pi_{n-i-1} (t)\}_{i} \\&- \{(\eta-\beta_{i+1})^{-1}\tilde \pi_{i} (s), \pi_{n-i-1} (t)\}_{i+1} \\&+
 <s(\infty),t(\infty)>_E\end{align*}
 \end{lemma}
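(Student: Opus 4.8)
The plan is to evaluate the residue \eqref{EtoP} defining $<s,t>_i$ by a contour/residue argument in the base coordinate $\eta$: one pushes the datum $(\phi_i^{-1}(s),t)$ — a section of $R^1\pi_*\pO(-C_0-C_\infty)\simeq\pO$ — out towards $\eta=\infty$, collecting along the way the contributions at the only places where it is singular, namely the supports of $Q_i^{n-i}$ and of $Q_{i+1}^{n-i-1}$, which by Proposition~\ref{degreeofQ} lie in the finite $\eta$-plane. Here $\phi_i\colon S_{i+1}^{n-i}\to P_i^{n-i-1}$ is an isomorphism off the supports of $Q_i^{n-i}$ and $Q_{i+1}^{n-i-1}$, so $\phi_i^{-1}(s)$ is a section of $S_{i+1}^{n-i}=\pi_*(E_{i+1}^{n-i})$ that is regular away from those loci and acquires poles of torsion type precisely along them; the identity then splits as the value of $(\phi_i^{-1}(s),t)$ at $\eta=\infty$, plus a contribution from each of $\mathrm{supp}\,Q_i^{n-i}$ and $\mathrm{supp}\,Q_{i+1}^{n-i-1}$, and the task is to identify the three pieces.

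First I would treat $\eta=\infty$. There $\phi_i$ is an isomorphism, and by Theorem~\ref{instanton-to-bundle} $E$ is trivial on the fibre $F_\infty$ with the two flags transverse, so $\phi_i^{-1}(s)$ and $t$ restrict to the fibre as constant sections of $E|_{F_\infty}$ lying in the relevant transverse flags; in a holomorphic trivialisation the identification $R^1\pi_*\pO(-C_0-C_\infty)\simeq\pO$ and the residue $\mathop{Res}_\xi(\,\cdot\,)\,\tfrac{d\xi}{\xi}$ reduce the pairing to the naive fibrewise pairing of these values, producing the term $<s(\infty),t(\infty)>_E$.

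Next comes the heart of the argument, the two support loci. Near $\mathrm{supp}\,Q_i^{n-i}$ I would use the sequence $0\to\pO\to P_i^{n-i-1}\xrightarrow{\pi_i}Q_i^{n-i}\to0$ from \eqref{p-to-q-sequence}, obtained by pushing $0\to E_i^{n-i-1}\hookrightarrow E_i^{n-i}\to\pO|_{C_\infty}\to0$ down to $\bP^1$, together with the resolution $\pO(-1)^{d_i}\xrightarrow{\eta-\beta_i}\pO^{d_i}\to Q_i^{n-i}\to0$ from \eqref{resol-P-Q}. Chasing the $\pi_*$ long exact sequences shows that the singular part of $\phi_i^{-1}(s)$ along this locus is exactly the representative $(\eta-\beta_i)^{-1}f_0$ attached to $\pi_i(s)$ in the explicit formula for $\{\cdot,\cdot\}_i$ displayed just above the lemma, and that pairing it against $t$ via the $E$-pairing pushed to $R^1$ only detects the component of $t$ in the complementary flag there, which on the $Q$-side is $\tilde\pi_{n-i-1}(t)\in H^0(\bP^1,Q^i_{n-i})$; hence this locus contributes $\{(\eta-\beta_i)^{-1}\pi_i(s),\tilde\pi_{n-i-1}(t)\}_i$. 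The locus $\mathrm{supp}\,Q_{i+1}^{n-i-1}$ is handled identically, now using $0\to\pO(m_i)\to P_i^{n-i-1}\xrightarrow{\tilde\pi_i}Q_{i+1}^{n-i-1}\to0$ (pushdown of $0\to E_i^{n-i-1}\hookrightarrow E_{i+1}^{n-i-1}\to\pO(m_{i+1})|_{C_0}\to0$) and the $\eta-\beta_{i+1}$ resolution of $Q_{i+1}^{n-i-1}$, pairing against $\pi_{n-i-1}(t)\in H^0(\bP^1,Q^{i+1}_{n-i-1})$; because the corresponding flag jump is supported on $C_0$ rather than on $C_\infty$, this contribution enters with the opposite orientation in the $\xi$-residue (equivalently, it is the ``$f_0$ versus $f_\infty$'' ambiguity in the definition of $\{\cdot,\cdot\}_{i+1}$ that supplies the sign), so it equals $-\{(\eta-\beta_{i+1})^{-1}\tilde\pi_i(s),\pi_{n-i-1}(t)\}_{i+1}$. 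Adding the three pieces gives the stated identity.

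The hard part will be exactly this sign-and-index bookkeeping in the last step: keeping straight the twist by $\pO(-C_0-C_\infty)$, the orientation conventions hidden in $\mathop{Res}_\xi(\,\cdot\,)\,\tfrac{d\xi}{\xi}$ and in the $\mathop{Res}_{\eta=\infty}\mathop{Res}_{\xi=0}$ of the $\{\cdot,\cdot\}$-pairings, and which of the inclusions $E_i^{n-i-1}\hookrightarrow E_i^{n-i}$ and $E_i^{n-i-1}\hookrightarrow E_{i+1}^{n-i-1}$ governs each pole, so that the two $\{\cdot,\cdot\}$-terms come out with the correct subscripts ($i$ versus $i+1$) and opposite signs. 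The underlying cocycle calculus is the same one already used above to write down $\{\cdot,\cdot\}$ and in the analogous monopole and caloron computations of \cite{Hurtubise:1989wh,Charbonneau:2006gu}, so no new ingredient beyond care is required.
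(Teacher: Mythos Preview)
Your proposal is correct and follows essentially the same route as the paper: both recognize that $<s,t>_i$ is a meromorphic function of $\eta$ on $\bP^1$ with poles only at the supports of $Q_i^{n-i}$ and $Q_{i+1}^{n-i-1}$, hence determined by its value at $\eta=\infty$ together with its principal parts at those loci, and both identify the three contributions exactly as you do (including the $f_0$ versus $f_\infty$ mechanism for the sign). The only difference is one of execution: the paper first passes to the generic case where the eigenvalues of $\beta_i,\beta_{i+1}$ are simple and disjoint, computes the residues explicitly in an eigenbasis $s_\mu$, and then invokes continuity, whereas you outline the same matching via diagram-chasing the pushforward sequences directly.
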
  
\begin{proof} Let us place ourselves in the generic situation in which the eigenvalues of $\beta_{i}, \beta_{i+1}$ are simple, and disjoint; the general case will follow by continuity. The scalar product $<s,t>_{i}$ is a meromorphic function, with $d_i+ d_{i+1} $ poles at union of the supports  of $ Q^{n-i}_{i}$, $Q^{n-i-1}_{i+1}$, which lie away from $\eta =\infty$. We note that such a function is determined by its residues at the poles, plus its value at infinity.

Now represent $s, t$ by cocycles $\sigma, \tau$ as in the preceding lemma; note that $\sigma, \tau$ also are representative cocycles for the projections $\pi_{i} (s),$ $\tilde \pi_{i} (s),$ $\pi_{n-i-1} (t),$ $\tilde  \pi_{n-i-1} (t)$. To define the pairing on the $P$, we had represented $\sigma$ as a  section $f = \phi^{-1}(s)$ of   $S_{i+1}^{n-i }$
 away from the support of $Q$, with simple poles at these points, and set $<s, t>_{i} = \mathop{Res}_{\xi=0}((f , t)_{i}) \frac{d\xi}{\xi}.$ 
 
Now let us look at the pairing of $ Q^{n-i}_{i}$ with $ Q_{n-i}^{i}$. Let us consider an eigenbasis $s_\mu$ of $\beta_i$ acting on $H^0(\bP^1, Q^{n-i}_{i})$ with $s_\mu$ supported at distinct points $ z_\mu$; in this basis, $\beta_{i}= \diag_\mu( z_\mu)$ . Decomposing $(\eta- z_\mu) s_\mu$ as $f_{\mu,0} -f_{\mu,\infty}$ as in the previous lemma, the pairing of $s_\mu$ with $t$ becomes 
  $$ \{s_\mu, t\}_{i} =   \mathop{Res}_{\eta= z_\mu} \mathop{Res}_{\xi = 0} \left(< (\eta- z_\mu)^{-1}f_{\mu,0}, \tau>_E  \frac{d\xi}{\xi}\wedge d\eta\right) ,$$
  and so 
  $$\{(\eta-\beta_{i})^{-1}s_\mu, t\}_{i} = (\eta- z_\mu)^{-1} \mathop{Res}_{\xi= 0} \left(< f_{\mu,0}, \tau>_E \frac{d\xi}{\xi} \right)   .$$
 If then $s = \sum_\mu a_\mu s_\mu$, so that $f_0 =  \sum_\mu a_\mu f_{0,\mu}$
  $$\{(\eta-\beta_{i})^{-1}s , t\}_{i} =\sum_\mu   (\eta- z_\mu)^{-1} \mathop{Res}_{\xi= 0} \left(<f_0, \tau>_E \frac{d\xi}{\xi} \right)   .$$
and so $\{(\eta-\beta_{i})^{-1}s , t\}_{i}$ and  $<s,t>_{i}$ have the same residue in $\eta$ at $ z_\mu$.
  If one looks at the pairing of $ Q^{n-i-1 }_{i+1 }$ with $ Q_{n-i-1 }^{i+1}$, one has the same formula, but with a different sign, as $f$ now represents the sum $\sum_\mu a_\mu f_{\mu,\infty}.$
 
 Our residues now match; all that remains is the value at infinity, which is immediate.
 \end{proof}
Now, let us examine some of the properties of the pairings, and the consequences for the maps in the resolutions of the $P,Q$ (recalling the definitions of the matrices in (\ref{resolution-PQ}), (\ref{resolution-PQ2}).
 
 \begin{lemma}\label{adjoint2}
 
 \begin{itemize}
  
 \item Taking residues at infinity in the preceding proposition, 
$$ \mathop{Res}_\infty( <s,t>_{i}) = \{\pi_{i} (s),\tilde \pi_{n-i-1}(t)\}_{i}  - \{\tilde\pi_{i}(s), \pi_{n-i-1}(t)\}_{i+1} $$

\item In consequence, letting the pairings $\{,\}_{i}, \{,\}_{i+1}$ have matrices $K_{i}, K_{i+1}$ in our bases,
  \begin{align*} 
K_{i}  \alpha_{n-i-1}- \alpha_{i}^T K_{i+1 } &=0 \\
 K_{i}  \alpha_{n-i-1}' & =\gamma_{i}^Tf_{i}\\
-(\alpha_{i}')^TK_{i+1}&=\gamma_{n-i-1}f_{i}.\end{align*}

\item Viewing the maps $\alpha_{i}, \alpha_{n-i-1}$ as maps $H^0(\bP^1, Q^{n-i}_{i}) \rightarrow H^0(\bP^1, Q^{n-i-1}_{i+1}) $, $H^0(\bP^1, Q_{n-i-1}^{i+1}) \rightarrow H^0(\bP^1, Q_{n-i}^{i }) $ via the isomorphisms \newline
$H^0(\bP^1, P^{n-i-1 }_{i}(-1))\rightarrow H^0(\bP^1, Q^{n-i}_{i}),$ $H^0(\bP^1, P_{n-i-1}^{i }(-1)) \rightarrow H^0(\bP^1, Q_{n-i-1}^{i+1})$, the maps $\alpha_{i}, \alpha_{n-i-1}$ are adjoints of each other.

\item In the same way, taking a trivial scalar product on a one-dimensional vector space, the maps $M_{i}  \alpha_{n-i}' ,  \gamma_{i}^Tf_{i}$ are adjoints of each other, as are the maps $-(\alpha_{i}')^TM_{i+1},  \gamma_{n-i-1}f_{i}$.
 \end{itemize}
 \end{lemma}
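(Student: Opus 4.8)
The plan is to prove the four items in sequence: the first is read off directly from Lemma~\ref{PtoQ2}, and the remaining three are bookkeeping against the resolutions of Proposition~\ref{matrices} as rewritten for $SO$, $Sp$ in (\ref{matrices-caloron}), (\ref{matrices-caloron2}). For the first item I would expand both sides of the identity of Lemma~\ref{PtoQ2} in powers of $1/\eta$ at $\eta=\infty$. Each of the two terms $\{(\eta-\beta_i)^{-1}\pi_i(s),\tilde\pi_{n-i-1}(t)\}_i$ and $\{(\eta-\beta_{i+1})^{-1}\tilde\pi_i(s),\pi_{n-i-1}(t)\}_{i+1}$ is a rational function of $\eta$ with poles only on the support of $Q^{n-i}_i$, resp.\ $Q^{n-i-1}_{i+1}$; since $(\eta-\beta)^{-1}=\eta^{-1}\bI+\eta^{-2}\beta+\cdots$, each vanishes at infinity with $\eta^{-1}$-coefficient $\{\pi_i(s),\tilde\pi_{n-i-1}(t)\}_i$, resp.\ $\{\tilde\pi_i(s),\pi_{n-i-1}(t)\}_{i+1}$, while $<s(\infty),t(\infty)>_E$ is the constant ($\eta^0$) part. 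Reading off the $\eta^{-1}$-coefficient of $<s,t>_i$ on both sides --- this is what $\mathop{Res}_\infty$ extracts here --- gives the first item.

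For the second item I would feed into this residue identity the explicit linear data of the section. A section $s$ of $P^{n-i-1}_i$, resp.\ $t$ of $P^i_{n-i-1}$, corresponds to a constant vector $A=\binom{a}{a'}\in\bC^{d_i+1}$, resp.\ $B=\binom{b}{b'}\in\bC^{d_{i+1}+1}$, the map $H^0(\pO^{d_i+1})\to H^0(P^{n-i-1}_i)$ (etc.) being an isomorphism because $H^0(\pO(-1))=H^1(\pO(-1))=0$; and reading the projections off the middle terms of the resolutions gives $\pi_i(s)=Y_iA=a$, $\tilde\pi_i(s)=\tilde Y_iA=\alpha_ia+\alpha_i'a'$, $\pi_{n-i-1}(t)=Y_{n-i-1}B=b$, $\tilde\pi_{n-i-1}(t)=\tilde Y_{n-i-1}B=\alpha_{n-i-1}b+\alpha_{n-i-1}'b'$. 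On the other side, the formula $<A,B>_i=[\gamma_i(\eta-\beta_i)^{-1}a+a']\,[\gamma_{n-i-1}(\eta-\beta_{n-i-1})^{-1}b+b']\,f_i$ derived earlier in the section has $\eta^{-1}$-coefficient $(a'\gamma_{n-i-1}b+b'\gamma_ia)f_i$. Writing the pairings as $\{u,v\}_i=u^TK_iv$ and $\{x,y\}_{i+1}=x^TK_{i+1}y$, the first item turns into the identity
\begin{equation*}
(a'\gamma_{n-i-1}b+b'\gamma_ia)f_i=a^TK_i(\alpha_{n-i-1}b+\alpha_{n-i-1}'b')-(\alpha_ia+\alpha_i'a')^TK_{i+1}b,
\end{equation*}
valid for all $a,a',b,b'$. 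Equating the parts bilinear in $(a,b)$, in $(a,b')$, in $(a',b)$ and in $(a',b')$ yields respectively $K_i\alpha_{n-i-1}-\alpha_i^TK_{i+1}=0$, $K_i\alpha_{n-i-1}'=\gamma_i^Tf_i$, $-(\alpha_i')^TK_{i+1}=\gamma_{n-i-1}f_i$ and $0=0$, which are the three relations of the second item. Being polynomial in the matrix entries, these relations pass from the generic case (simple, disjoint spectra of $\beta_i$, $\beta_{i+1}$) in which Lemma~\ref{PtoQ2} was proved to the general case by continuity.

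For the third and fourth items I would simply reinterpret these relations. The identity $\alpha_i^TK_{i+1}=K_i\alpha_{n-i-1}$ is precisely $\{\alpha_iu,v\}_{i+1}=\{u,\alpha_{n-i-1}v\}_i$, so --- viewing $\alpha_i$ through the isomorphism $H^0(P^{n-i-1}_i(-1))\cong H^0(Q^{n-i}_i)$ of Proposition~\ref{matrices} as a map $H^0(Q^{n-i}_i)\to H^0(Q^{n-i-1}_{i+1})$, and $\alpha_{n-i-1}$ similarly --- it states that $\alpha_i$ and $\alpha_{n-i-1}$ are mutually adjoint for the perfect pairings $\{\cdot,\cdot\}_i$, $\{\cdot,\cdot\}_{i+1}$. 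In the same way $K_i\alpha_{n-i-1}'=\gamma_i^Tf_i$ says that $\alpha_{n-i-1}'\colon\bC\to H^0(Q^i_{n-i})$ and $f_i\gamma_i\colon H^0(Q^{n-i}_i)\to\bC$ are adjoint for $\{\cdot,\cdot\}_i$ and the trivial pairing on $\bC$, and $-(\alpha_i')^TK_{i+1}=\gamma_{n-i-1}f_i$ gives the analogous statement at level $i+1$.

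The main point requiring care --- rather than anything conceptually hard --- will be the bookkeeping: verifying that the matrices $\alpha_i$, $\alpha_i'$, $\gamma_i$ of (\ref{matrices-caloron}) do represent the asserted maps on global sections after the twist by $\pO(-1)$, and keeping signs and index conventions throughout consistent with the normalisation of the pairings fixed earlier in the section.
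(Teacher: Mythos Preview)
Your proposal is correct and follows essentially the same route as the paper's own proof: both compute the residue at infinity of the explicit formula $<A,B>_i=[\gamma_i(\eta-\beta_i)^{-1}a+a'][\gamma_{n-i-1}(\eta-\beta_{n-i-1})^{-1}b+b']f_i$, equate it with the right-hand side of the first item expressed via the matrices $K_i,K_{i+1}$ and the projections $\pi_i,\tilde\pi_i$, and then isolate the bilinear pieces in $(a,b)$, $(a,b')$, $(a',b)$ to read off the three matrix relations. Your write-up is somewhat more explicit than the paper's (you spell out the $\eta^{-1}$-expansion and note the continuity passage from the generic case), but the argument is the same.
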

 
 \begin{proof}  
  
  Writing the sections $s,t$ as
 $$ s = \begin{pmatrix}\tilde s \\ s '\end{pmatrix},\quad t = \begin{pmatrix}\tilde t \\t'\end{pmatrix},$$
 and letting the pairings $\{,\}_{i}, \{,\}_{i+1}$ have matrices $K_i, K_{i+1}$ in our bases, we have 
 \begin{multline} 
  \tilde s^T K_{i} \begin{pmatrix}\alpha_{n-i-1}& \alpha_{n-i-1}' \end{pmatrix}\begin{pmatrix}\tilde t\\t'\end{pmatrix} -
 \begin{pmatrix}\tilde s^T& s' \end{pmatrix}\begin{pmatrix}\alpha_{i}^T\\(\alpha_{i}')^T\end{pmatrix}K_{i+1} \tilde t \\
 = \mathop{Res}_\infty[ ( \tilde s^T (  (\eta-\beta_{i})^{-1})^T \gamma_{i}^T  + s' ) ( \gamma_{n-i-1} (\eta-\beta_{n-i-1} )^{-1} \tilde t +t')]f_{i}.
  \end{multline}
  If $s ' = t'=0$, one finds 
  $$K_{i}  \alpha_{n-i-1} - \alpha_{i}^T K_{i+1} =0 ;$$
if $s '  = \tilde t =0$,
$$K_{i}  \alpha_{{n-i-1}}'  =  \gamma_{i}^Tf_{i};$$
 if $\tilde s  = t' =0$,
 $$-(\alpha_{i}')^TK_{i+1 }=  \gamma_{n-i-1}f_{i}.$$

 \end{proof}
 
 \begin{lemma} 
 The component  $ f_{i} ((\eta-\beta_{i})^{-1})^T \gamma_{i}^T\gamma_{n-i-1} (\eta-\beta_{n-i-1} )^{-1}$ of the pairing $<,>_{i}$ can be written as
\begin{align*}
& [((\eta-\beta_{i})^{-1})^T  K_{i} \alpha_{n-i-1} - K_{i} \alpha_{n-i-1}(\eta-\beta_{n-i-1} )^{-1}]\\
=& [((\eta-\beta_{i})^{-1})^T  K_{i} \alpha_{n-i-1} -  \alpha_{i}^T K_{i+1}(\eta-\beta_{n-i-1} )^{-1}].
\end{align*}
 \end{lemma}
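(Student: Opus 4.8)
The plan is to clear denominators and reduce the assertion to the matrix identities already established in Lemmas~\ref{adjoint} and \ref{adjoint2} together with the commutation relation coming from the diagram (\ref{resolution-PQ2}). The equality of the two displayed right‑hand sides is immediate: it is nothing more than the first relation of Lemma~\ref{adjoint2}, namely $K_i\alpha_{n-i-1}=\alpha_i^TK_{i+1}$, substituted into the second summand. So the only real content is the first equality,
\begin{equation*}
f_i\,((\eta-\beta_i)^{-1})^T \gamma_i^T\gamma_{n-i-1}(\eta-\beta_{n-i-1})^{-1}
= ((\eta-\beta_i)^{-1})^T K_i\alpha_{n-i-1} - K_i\alpha_{n-i-1}(\eta-\beta_{n-i-1})^{-1}.
\end{equation*}

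Both sides are matrices of rational functions of $\eta$ (the only denominators being the characteristic polynomials of $\beta_i$ and of $\beta_{n-i-1}$), so it suffices to verify the identity for generic $\eta$, where $\eta\bI-\beta_i$ and $\eta\bI-\beta_{n-i-1}$ are invertible; it then holds everywhere by continuity. Multiplying on the left by $(\eta\bI-\beta_i)^T$ and on the right by $(\eta\bI-\beta_{n-i-1})$, and using $(\eta\bI-\beta_i)^T((\eta\bI-\beta_i)^{-1})^T=\bI$ on each factor, the left side becomes $f_i\gamma_i^T\gamma_{n-i-1}$ while the right side becomes
\begin{equation*}
K_i\alpha_{n-i-1}(\eta\bI-\beta_{n-i-1}) - (\eta\bI-\beta_i)^T K_i\alpha_{n-i-1}
= \beta_i^TK_i\alpha_{n-i-1} - K_i\alpha_{n-i-1}\beta_{n-i-1},
\end{equation*}
the terms linear in $\eta$ cancelling. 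Thus the claim reduces to the constant‑coefficient identity $\beta_i^TK_i\alpha_{n-i-1} - K_i\alpha_{n-i-1}\beta_{n-i-1} = f_i\gamma_i^T\gamma_{n-i-1}$.

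To close this I would use, in order: Lemma~\ref{adjoint} in the form $\beta_i^TK_i=K_i\beta_{n-i}$, turning the first term into $K_i\beta_{n-i}\alpha_{n-i-1}$; then the commutation relation $\beta_{n-i}\alpha_{n-i-1}=\alpha_{n-i-1}\beta_{n-i-1}+\alpha_{n-i-1}'\gamma_{n-i-1}$, read off the commuting diagram (\ref{resolution-PQ2}), which collapses $\beta_i^TK_i\alpha_{n-i-1}-K_i\alpha_{n-i-1}\beta_{n-i-1}$ to $K_i\alpha_{n-i-1}'\gamma_{n-i-1}$; and finally the second relation of Lemma~\ref{adjoint2}, $K_i\alpha_{n-i-1}'=\gamma_i^Tf_i$, giving $f_i\gamma_i^T\gamma_{n-i-1}$, as required.

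No genuine obstacle arises; the computation is forced once the bookkeeping is set up. The one point demanding care is that the $\beta_{n-i-1}$ occurring in the pairing formula for $<\cdot,\cdot>_i$ is the one attached to the sheaf $Q_{n-i-1}^{i+1}$ onto which $P_{n-i-1}^{i}$ surjects, whereas Lemma~\ref{adjoint} pairs $\beta_i$ with $\beta_{n-i}$; it is precisely the commutation relation of (\ref{resolution-PQ2}) that bridges $\beta_{n-i}$ and $\beta_{n-i-1}$, and it is essential that the $\alpha_{n-i-1}'$ appearing there is the same matrix that occurs, via Lemma~\ref{adjoint2}, as (a transpose of) $\gamma_i$.
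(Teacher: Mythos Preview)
Your proof is correct and follows essentially the same route as the paper: both reduce the statement to the constant identity $\beta_i^TK_i\alpha_{n-i-1} - K_i\alpha_{n-i-1}\beta_{n-i-1} = f_i\gamma_i^T\gamma_{n-i-1}$ and then derive this from Lemma~\ref{adjoint}, the commutation relation $\beta_{n-i}\alpha_{n-i-1}=\alpha_{n-i-1}\beta_{n-i-1}+\alpha_{n-i-1}'\gamma_{n-i-1}$, and the relation $K_i\alpha_{n-i-1}'=\gamma_i^Tf_i$ of Lemma~\ref{adjoint2}. The only cosmetic difference is that you clear denominators first and then establish the constant identity, whereas the paper derives the constant identity first and then substitutes it into the expression with the resolvents.
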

 \begin{proof}  Lemma \ref{adjoint} gives 
 $$\beta^T_{i}K_{i} \alpha_{n-i-1}= K_{i}\beta_{n-i}\alpha_{n-i-1}.$$
 From the commutativity of the diagrams giving the resolution of the sheaves, one has 
 $\beta_{n-i}\alpha_{n-i-1} = \alpha_{n-i-1} \beta_{n-i-1} + \alpha'_{n-i-1} \gamma_{n-i-1}$, and so
 $$\beta^T_{i}K_{i} \alpha_{n-i-1} = K_{i} (\alpha_{n-i-1} \beta_{n-i-1} + \alpha'_{n-i-1} \gamma_{n-i-1}),$$
 giving by the relations in the previous lemma
 $$\beta^T_{i}K_{i} \alpha_{n-i-1}  -  K_{i} \alpha_{n-i-1} \beta_{n-i-1} =   f_{i} \gamma_{i}^T \gamma_{n-i-1},$$
 substituting this into the   component:
  $$ [((\eta-\beta_{i})^{-1})^T [(-\eta + \beta^T_{i})K_{i} \alpha_{n-i-1} - K_{i}  \alpha_{n-i-1} (-\eta+ \beta_{n-i-1})] (\eta-\beta_{n-i-1} )^{-1}],$$
 and so 
 $$[(-K_{i} \alpha_{n-i-1} ( \eta- \beta_{n-i-1})^{-1} + ((\eta-\beta_{i})^{-1})^TK_{i}  \alpha_{n-i-1}]. $$
\end{proof}
 
Once one has the formula expressing the scalar product of the $P$ in terms of those on the $Q$, one can use it in the opposite direction. Suppose that we have pairings
$$H^0(\bP^1, Q_{n-i}^{i}) \otimes H^0(\bP^1, Q^{n-i}_{i})\rightarrow \bC,$$
that are such that the adjointness properties of Lemma~\ref{adjoint} are satisfied. One can then use the formulae of Lemma~\ref{PtoQ2} to define pairings on sections of the $P_i$. One can then show that it descends  from sections to the actual sheaves, and then gives a holomorphic pairing on sections of $E$.

There remains the issue of understanding how the pairing 
$H^0(\bP^1, Q_{0}^{n}) \otimes H^0(\bP^1, Q^{0}_{n})\rightarrow \bC$ interacts with the chain of correspondences 
$$ Q_{0}^{n}=Q^{0}_{n, k}\leftrightarrow Q^{0}_{n, k-1}\leftrightarrow Q^{0}_{n, k-2}\leftrightarrow...\leftrightarrow Q^{0}_{n, 1}\leftrightarrow Q^{0}_{n,0} = Q^{0}_{n}$$

Recall that $Q^{0}_{n, j}  = R^1(\pi_2)_*(\tilde E_n^0(\sum_{i\leq j}\tilde D_{i,\xi})) $.
The map $M_\psi: H^0(\bP^1, Q_{0}^{n}) \rightarrow H^0(\bP^1, Q^{0}_{n})$ is the map induced on cohomology by $m_\psi: \tilde E_0^n \rightarrow \tilde E^0_n$, that is multiplication by $\psi$. We have an isomorphism away from $\eta = \infty$, given by multiplication by $\xi$,   $m_\xi:\tilde E^0_n (\sum_i\tilde D_{i,\xi})\rightarrow \tilde E_0^n$;  composing, our map  $M_\psi$ becomes  induced by the multiplication map 
$$m_ {\prod_j(\eta-z_j)} : \tilde E^0_n (\sum_i\tilde D_{i,\xi})\rightarrow \tilde E^0_n;$$
 this factors into a composition of terms 
 $$m_ { (\eta-z_j) }: \tilde E_n^0(\sum_{i\leq j+1}\tilde D_{i,\xi})\rightarrow \tilde E_n^0(\sum_{i\leq j}\tilde D_{i,\xi})$$
(the actual function depends on the trivialisations chosen; these are more convenient than the functions $\xi_j$) and on the level of cohomologies:
 $$M_{\psi, j}: H^0(\bP^1, Q^{0}_{n, j+1})\rightarrow H^0(\bP^1, Q^{0}_{n,j}),$$
with the composition  $M_{\psi, 1}\circ M_{\psi, 2} \circ ... \circ M_{\psi, k} $ being $M_{\psi}$. The sheaves  $Q_0^n, Q^0_n, Q^{0}_{n,j}$ are supported at $\eta = z_i, i=1,..k$ and at other points $y_\mu$. Now split these terms according to their support:
$$Q^{0}_{n,j} = \oplus_i Q^{0}_{n,j, z_i} \oplus  Q^{0}_{n,j, rest}$$
where the last term is the piece supported at the other points $y_\mu$. As noted above the pairing localizes, so that only sections with overlapping support pair non-trivially: the pairing between $ Q_{0}^{n}$ and  $Q^{0}_{n}$ can be written as a sum of pairings
\begin{align}
H^0(\bP^1,Q_{0,z_i}^{n})\otimes H^0(\bP^1,Q^{0}_{n, z_i})&\rightarrow \bC,\ i= 1,..,k,\\
H^0(\bP^1,Q_{0,rest}^{n})\otimes H^0(\bP^1,Q^{0}_{n, rest})&\rightarrow \bC.
\end{align}
The map $M_{\psi, j}$ respects these decompositions and is an isomorphism on the subspaces $H^0(\bP^1,Q^{0}_{n,j, rest})$ and   $H^0(\bP^1,Q^{0}_{n,j+1, z_i}), i\neq j$; by taking a composition of the maps $M_{\psi, j+1}\circ M_{\psi, 2} \circ ... \circ M_{\psi, k} $ on $H^0(\bP^1, Q^{0}_{n,z_{j}})$ and $M_{\psi,1}\circ M_{\psi, 2} \circ ... \circ M_{\psi, j} $ on $H^0(\bP^1, Q^{0}_{n,j, z_j})$, one can then transfer the pairings $H^0(\bP^1, Q_{0, z_j}^{n}) \otimes H^0(\bP^1, Q^{0}_{n,z_j})\rightarrow \bC$ to 
$$H^0(\bP^1, Q_{0,j+1, z_{j }}^{n}) \otimes H^0(\bP^1, Q^{0}_{n,j,z_j})\rightarrow \bC.$$ As the map $M_{\psi, j}$ is induced by multiplication by a function, it is then self adjoint, by the same argument given  in Lemma (\ref{adjoint}) for $M_{\psi}$. Similar considerations also apply to the maps $M_{\xi, j}$.

\newcommand{\etalchar}[1]{$^{#1}$}
\providecommand{\bysame}{\leavevmode\hbox to3em{\hrulefill}\thinspace}
\providecommand{\MR}{\relax\ifhmode\unskip\space\fi MR }
\providecommand{\MRhref}[2]{%
  \href{http://www.ams.org/mathscinet-getitem?mr=#1}{#2}
}
\providecommand{\href}[2]{#2}

 \end{document}